\documentclass[a4paper,11pt]{article}
\usepackage[margin=2.5cm]{geometry}
\usepackage{graphicx} 
\usepackage{amsmath,amssymb,amsfonts,amsthm,mathtools,thmtools}
\numberwithin{equation}{section}
\usepackage{todonotes, hyperref}
\usepackage{cleveref}
\usepackage{bm, dsfont} 

\usepackage{tikz}
\usepackage{pgfplots}
\pgfplotsset{compat=1.18} 

\renewcommand{\d}{\, \mathrm{d}}
\renewcommand{\Im}{\mathrm{Im}}
\renewcommand{\Re}{\mathrm{Re}}
\newcommand{\e}{\mathrm{e}}
\newcommand{\im}{\mathrm{i}}
\newcommand{\R}{\mathbb{R}}
\newcommand{\N}{\mathbb{N}}
\newcommand{\Z}{\mathbb{Z}}
\newcommand{\F}{\mathcal{F}}
\newcommand{\C}{\mathbb{C}}

\DeclareMathOperator{\sinc}{sinc}
\DeclareMathOperator{\sign}{sign}
\newcommand{\norm}[1]{\left\lVert \smash{#1} \right\rVert}

\newtheorem{theorem}{Theorem}[section]
\newtheorem{lemma}[theorem]{Lemma}
\newtheorem{corollary}[theorem]{Corollary}

\theoremstyle{definition}

\newtheorem{remark}[theorem]{Remark}

\title{Approximating the Fourier Transform from Non-equispaced Discrete Samples}
\author{ 
  \href{https://orcid.org/0000-0003-3651-4364}{\includegraphics[scale=0.06]{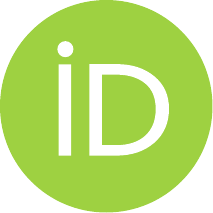}\hspace{1mm}Daniel Potts} \\
    Faculty of Mathematics\\
    Chemnitz University of Technology\\
    Germany\\
    \texttt{potts@math.tu-chemnitz.de}
    \and
	\href{https://orcid.org/0000-0001-7988-1485}{\includegraphics[scale=0.06]{orcid.pdf}\hspace{1mm}Laura Weidensager\thanks{Corresponding author}} \\
	Department of Mathematics\\
	Simon Fraser University\\
	Canada\\
	\texttt{laura\_weidensager@sfu.ca}
}
\date{\today}

\begin{document}

\maketitle

 \begin{abstract}We study the approximation of the Fourier transform of a function from finitely many samples. Departing from the equispaced setting, we sample the function at deterministic non-equispaced nodes obtained by transforming equispaced points on $[0,1]$
through the inverse cumulative distribution function of a probability density. This change of variables compactifies the real line, so that no truncation
of the space domain is necessary. Combining an exact aliasing identity for the midpoint rule with
stationary-phase estimates for the transformed oscillatory integrals, we derive
deterministic error bounds for every frequency and in $L_p$. \par
For functions with polynomial decay in space and frequency, an explicitly optimized density recovers the equispaced convergence
rate, while an additional variance parameter
substantially reduces the pre-asymptotic error constants. For (sub-)exponentially
decaying functions a polynomially decaying density yields (sub-)exponential
rates. Numerical experiments confirm the theory and demonstrate the reduced
pre-asymptotic error compared with optimally scaled equispaced sampling.
\end{abstract}
\vspace{1em} 
\noindent\textbf{Keywords:} Fourier transform, non-equispaced data, approximation rate, weight functions

\section{Introduction}

Recent advances in deriving error estimates between the discrete Fourier transform and the Fourier transform of functions on $\R$ have highlighted an optimal recipe of how to relate the number of samples, the sampling interval and the grid size,~\cite{EhGrKl24}.
Motivated by the success of random Fourier features~\cite{RaRe07,LiXiYuSu22,PoWei25}, we extend the philosophy from~\cite{EhGrKl24} to non-equispaced samples.
In this work, we investigate whether non-equispaced sampling strategies can 
outperform classical equispaced sampling for functions with specific decay 
properties. This question arises from the observation that the efficiency of 
equispaced sampling may depend on the decay characteristics of the function. 
To address this, we introduce a non-equispaced sampling strategy for the 
approximation of the Fourier transform. By utilizing a tailored transformation, 
we construct a sampling distribution that is explicitly driven by the algebraic 
or exponential decay of the function $f$ and its Fourier transform~$\hat f$.
\par
We study the numerical approximation of the Fourier transform
\begin{equation*}
    \hat f(\xi) = \int_{-\infty}^\infty f(x)\e^{-2\pi\im x\xi}\d x
\end{equation*}
from sample values of $f$.
In~\cite{EhGrKl24} the authors show how the fast Fourier transform (FFT) approximates this integral from equispaced samples of $f$ on a grid,
\begin{equation}\label{eq:f_hat_approx_equi}
    \hat f\left(\frac kp\right) \approx h\sum_{-\tfrac n2<j\leq \tfrac n2}f(hj)\e^{-2\pi\im \frac{kj}{n}},\quad k\in \Z, -\tfrac n2<k\leq \tfrac n2,
\end{equation}
where $p=hn$. This approximation is motivated by discretizing the integral and then truncating the infinite series. The FFT is one of the most important numerical algorithms. One can provide reasonable error estimates for this approximation. For several classes of functions the authors in~\cite{EhGrKl24} answer the question of how the parameters $h,n$ and $p$ should be chosen and give error rates and the asymptotic decay. In contrast, in this paper we investigate the approximation from non-equispaced samples, where more samples are used at points where the function is expected to be larger and less samples in the tails, where the function is decaying. Instead of the approximation~\eqref{eq:f_hat_approx_equi}, we study the approximation of the form
\begin{equation}\label{eq:new_approach}
    \hat f(\xi) \approx \frac 1M \sum_{j=1}^M \frac{f(x_j)}{\mu(x_j)}\e^{-2\pi\im x_j\xi}, \quad x_j\sim \mu,
\end{equation}
where the samples $x_j$ are distributed on $\R$ according to some distribution $\mu$, instead of equispaced $x_j = hj$ as in~\eqref{eq:f_hat_approx_equi}.
For random nodes $x_j\sim\mu$, the sum in~\eqref{eq:new_approach} is the classical Monte Carlo importance sampling estimator of the integral, which is at the heart of random Fourier feature methods~\cite{RaRe07,LiXiYuSu22}. In contrast, we construct the nodes $x_j$ deterministically by inverse transform sampling, i.e., by mapping equispaced points on $[0,1]$ through the inverse cumulative distribution function of $\mu$. This allows us to prove deterministic error bounds instead of estimates that hold only in expectation or with high probability.
\par

The numerical integration of highly oscillatory or heavy-tailed functions over the unbounded domain $\R$ remains a fundamental challenge in computational mathematics. While classical optimally scaled equispaced grids and sinc-based quadratures~\cite{EhGrKl24} achieve high convergence rates for rapidly decaying integrands, their efficacy degrades severely when applied to functions with algebraic or rational tails.
Classical equispaced grids necessitate a truncation domain. This domain stretching imposes a computational compromise. It inevitably dilutes the spatial resolution near the origin, triggering severe phase-aliasing for high-frequency components and increasing the pre-asymptotic error constant. Consequently, while these equispaced schemes may theoretically attain optimal asymptotic rates as the number of nodes $M\rightarrow \infty$, they may fail to provide acceptable accuracy in practically feasible computational regimes.\par
To reduce these pre-asymptotic error bounds, this paper investigates a transformed quadrature scheme. By utilizing a  density function $\mu$ to define a transformation, we automatically compactify the infinite domain $\R$ onto a finite integration window $[0,1]$. This completely eliminates the need for explicit space domain truncation. The transformation naturally redistributes the computational grid. It maintains high resolution at the center to resolve high-frequency oscillations, while it also integrates the asymptotic tails to infinity.
\begin{figure}[htb]
    \centering
    \includegraphics[width=1\linewidth]{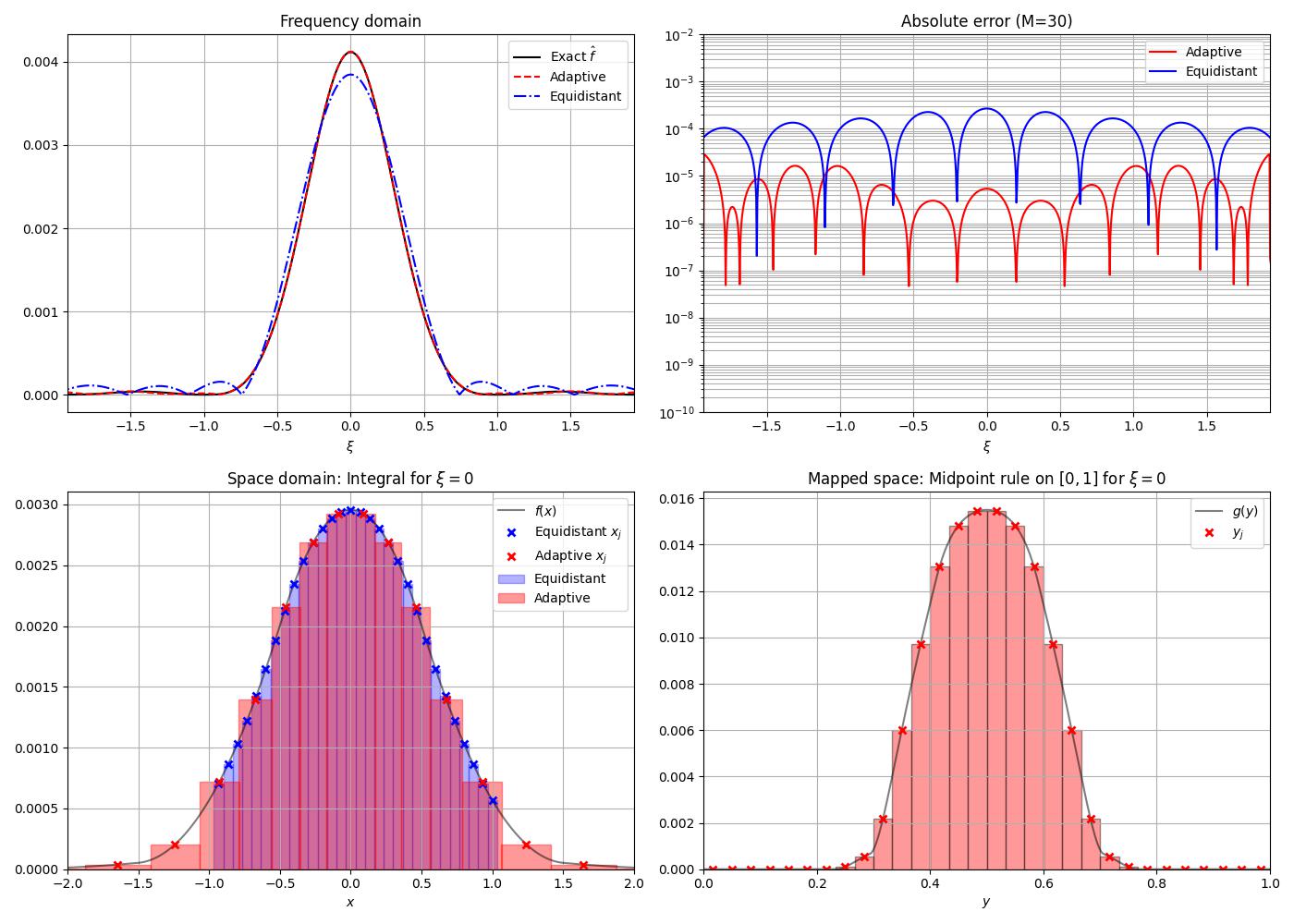}
    \caption{Approximation of the Fourier transform of the function~\eqref{eq:fab} with parameters $a=8$ and $b=3$ with equispaced points and sinc interpolation as in~\cite{EhGrKl24} (blue) and with our method (red) with the optimal choice of the distribution $\mu$ using $M=30$ samples for both methods. In the bottom left picture not all red points are shown, since some are outside of the plotted interval.}
    \label{fig:intro}
\end{figure}

Figure~\ref{fig:intro} illustrates the fundamental spatial trade-off between classical equispaced sampling from~\cite{EhGrKl24} with sinc interpolation and our proposed quadrature. The equispaced grid truncates the infinite integration domain $\R$, discarding the asymptotic tails and thereby introducing a truncation error. Conversely, our approach structurally captures the entire real line. The non-equispaced samples $\{x_j\}_{j=1}^M$ on $\R$ are mapped to $[0,1]$ via the cumulative distribution function of the density $\mu$. And then on the compact domain $[0,1]$ we use the midpoint rule to approximate the integral, see right lower picture in Figure~\ref{fig:intro}. 
While this global coverage may initially necessitate a slight reduction in central node density for small $M$, this trade-off is overcome by the nature of our mapping method. As the total number of points $M$ increases, the transformation leads to a larger density of samples directly in the central region. Consequently, the mapped scheme achieves high-resolution accuracy at the peak while preserving the infinite tails, without ever requiring truncation in the space domain.

\subsection{Our contributions}

The main contributions of this paper can be summarized as follows.
\begin{itemize}
\item \emph{A deterministic non-equispaced quadrature for the Fourier transform:} Given a probability density $\mu$ with cumulative distribution function $\Psi$, we sample $f$ at the deterministic non-equispaced nodes $x_j = \Psi^{-1}\big(\tfrac{2j-1}{2M}\big)$, $j=1,\ldots,M$, and approximate $\hat f$ by the weighted exponential sum~\eqref{eq:ftilde}. The transformation compactifies the real line, so that no truncation of the space domain is needed, and the sum can be evaluated at $N$ arbitrary frequencies by the NNFFT in $\mathcal O(K\log K + M + N)$ arithmetical operations~\cite[Section 7.3]{PlPoStTa23}.
\item \emph{Error estimates for arbitrary frequencies and in $L_p$:} We prove error bounds for every $\xi \in\R$, not only for the equispaced frequencies $\hat f\big(\tfrac kp\big)$ covered by~\cite{EhGrKl24}. And we measure the error in $L_p(\R)$ for all $p\geq 1$, see~\eqref{eq:L_p-error}. Only a truncation in the frequency domain is required. The key technical ingredients are the aliasing identity for the midpoint rule (Theorem~\ref{thm:aliasing_midpoint}) combined with stationary phase estimates in the spirit of van der Corput's lemma for the transformed oscillatory integrals (Theorem~\ref{thm:fourier_coeffs}).
\item \emph{Polynomial decay:} For functions with polynomial decay of order $a$ in space and $b$ in frequency we recover in the heavy-tail regime in the limit the error rate $M^{-\frac{(a-1/2)(b-1/2)}{a+b-1}}$ of equispaced sampling~\cite{EhGrKl24}, see Section~\ref{sec:polynomial}. The density~\eqref{eq:mu_pol} carries two additional degrees of freedom: the tail exponent $\alpha$, which we optimize explicitly in~\eqref{eq:alpha_opt}, and the variance $\sigma$, which leaves the rates untouched but can be tuned to reduce the pre-asymptotic error constants considerably, see Figure~\ref{fig:intro} and Section~\ref{sec:numerics}.
\item \emph{(Sub-)exponential decay:} For functions with exponential decay we state with our method in Section~\ref{sec:exponential} sub-exponential error decay similar to the equispaced sampling as in~\cite{EhGrKl24}. While having the additional variance parameter $\sigma$ as tuning parameter, we reduce the pre-asymptotic error constants for analytic functions.
\end{itemize}

The paper is organized as follows. In Section~\ref{sec:approach} we introduce the transformed quadrature rule, the truncated approximation~\eqref{eq:ftilde} and the aliasing identity of the midpoint rule. Section~\ref{sec:relation} embeds classical equispaced sampling into this framework and relates the resulting error splitting to the estimates of~\cite{EhGrKl24}. Section~\ref{sec:rates} contains the error analysis. After recalling van der Corput's lemma, we treat weights of polynomial growth, optimize the parameters $\alpha$ and $\sigma$ of the sampling density. In the case of (sub-)exponential weight functions, we also use a polynomial distribution to state error estimates. In Section~\ref{sec:numerics} we present numerical experiments which confirm the theoretical findings. Frequently used integral computations are collected in the appendix.

\section{Our approach: Non-equispaced sampling}\label{sec:approach}
Instead of using an equispaced grid in time and frequency domain like in~\cite{EhGrKl24}, we introduce a measure $\mu$ on the time domain. In order to avoid dealing with probability results as for random Fourier features and to be able to give error decay rates comparable to these in~\cite{EhGrKl24}, we use the cumulative distribution function to transform equispaced samples on the unit interval $[0,1]$ to $\R$, which model the distribution $\mu$ on $\R$.\\

Combining an analytic change of variables with a simple equispaced quadrature rule is a
classical idea in numerical integration. Sinc and double-exponential (tanh–sinh) rules
use a transformation together with the truncated trapezoidal rule and place the equispaced nodes so that the integrand decays rapidly at the images of the endpoints. There
the transformation typically maps a finite or semi-infinite interval onto $\R$,~\cite{TaMo74,Stenger93,MoSu01,TrWe14}.
In the context of multivariate approximation, transformations to the torus have been employed for lattice rules and wavelet methods~\cite{NuSu21,NaPo20,LiPo22}. The new difficulty in our setting is that the integrand is oscillatory with an unbounded frequency parameter $\xi$, and that we aim for error estimates which are explicit in $\xi$ in order to control the $L_p$-error on the whole real line.

\subsection{Decay in space and frequency domain}
We quantify the decay of a function by the use of a weight function $v\colon\R\rightarrow (0,\infty)$.
We introduce the following norms and the corresponding function spaces by
\begin{align*}
\norm{f}_{\F_v} &\coloneqq  \sup_{x\in \R}  |f(x)| \, v(x), & \quad \F_v &\coloneqq \{f\in C(\R)\mid \norm{f}_{\F_v}<\infty\}, \\
\norm{f}_{\F_{v,m}} &\coloneqq  \sup_{x\in \R}  \left(\sum_{\ell = 0}^m|f^{(\ell)}(x)|(1+|x|^2)^{\ell/2}\right) \, v(x), & \quad \F_{v,r} &\coloneqq \{f\in C(\R)\mid \norm{f}_{\F_{v,r}}<\infty\} .
\end{align*}

Typical examples, which we will study in this paper include polynomial weights $v(x) = (1+x^2)^{a/2}$ and exponential weights $v(x) = \e^{a|x|}$ for $a>0$.
Similarly, we characterize the decay of the Fourier transform of $f$ via $\norm{\hat f}_{\F_w}$ for a weight function $w$.

\subsection{Description of the sampling method}
Let $\mu$ be a density with cumulative distribution function $\Psi\colon \R\rightarrow [0,1]$. 
In the following we will always use the variable $x$ on $\R$ and the corresponding variable $y$ on $[0,1]$. Now, we change the integral to 
\begin{align*}
    \hat f(\xi) &= \int_{-\infty}^\infty f(x)\e^{-2\pi\im x\xi}\d x=\int_{0}^1 \frac{f(\Psi^{-1}(y))}{\mu(\Psi^{-1}(y))}\e^{-2\pi\im \Psi^{-1}(y) \xi}\d y.
\end{align*}
This integral is approximated by the midpoint rule by
\begin{align}
    \hat f(\xi) &\approx \frac{1}{M} \sum_{j=1}^M  \frac{f(\Psi^{-1}(y_j))}{\mu(\Psi^{-1}(y_j))}\e^{-2\pi\im \Psi^{-1}(y_j) \xi} = \frac{1}{M} \sum_{j=1}^M  \frac{f(x_j)}{\mu(x_j)}\e^{-2\pi\im x_j \xi} \eqqcolon \tilde f(\xi),\label{eq:f_tilde_approx}\\
    y_j &\coloneqq   \frac{(2j-1)}{2M} . \notag
\end{align}
There, the samples $x_j\in \R$ are distributed according to a discrete version of the density $\mu$.
We note that we compute $\tilde f(\xi_\ell)$ for $\ell =0,\ldots, N-1$ with $|x_j\xi_\ell|<K$ by applying the nonequispaced Fourier transform in space and frequency domain (NNFFT)
to the vector $\left( \frac{f(x_j)}{\mu(x_j)}\right)_{j=1}^M$ in ${\cal O}(K\log K +M+N)$ arithmetical operations, see \cite[Section 7.3]{PlPoStTa23}.
We restrict the evaluation of the Fourier transform $\hat f$ to a compact interval $[-\Omega,\Omega]$ and set the approximation $\tilde f$ to zero outside of this interval,
\begin{equation}\label{eq:ftilde}
\tilde f(\xi) \coloneqq \begin{cases}
\frac{1}{M} \sum_{j=1}^M  \frac{f(x_j)}{\mu(x_j)}\e^{-2\pi\im\xi x_j } &\text{ if }|\xi|\leq \Omega\\
0& \text{ otherwise.}\\   
\end{cases}
\end{equation}
In this paper we will state error decay results for the $L_p$-error for $p\geq 1$, defined by
\begin{equation}\label{eq:L_p-error}
E_p \coloneqq \left( \int_{\R}|\hat f(\xi) - \tilde f(\xi)|^p \d \xi \right)^{1/p}.
\end{equation}
By truncating in frequency domain, we receive the following estimate, 
\begin{align}
E_p^p&=\int_{\R}|\hat f(\xi) - \tilde f(\xi)|^p \d \xi \leq \int_{-\infty}^{-\Omega}|\hat f(\xi)|^p \d \xi +\int_{\Omega}^{\infty}|\hat f(\xi)|^p \d \xi +  \int_{-\Omega}^\Omega|\hat f(\xi) - \tilde f(\xi)|^p \d \xi, \notag\\
&\leq \norm{\hat f}_{\F_{w}}^p \left(\int_{-\infty}^{-\Omega} \frac{1}{|w(\xi)|^p} \d \xi + \int_{\Omega}^\infty \frac{1}{|w(\xi)|^p} \d \xi \right) +  \int_{-\Omega}^\Omega|\hat f(\xi) - \tilde f(\xi)|^p \d \xi. \label{eq:Wabfall}
\end{align}\par
Some truncation in the frequency domain as in~\eqref{eq:ftilde} is unavoidable. The exponential sum in~\eqref{eq:f_tilde_approx} is a finite linear combination of the functions $\xi\mapsto\e^{-2\pi\im x_j\xi}$ and therefore an almost periodic function of $\xi$, see e.g.~\cite{Co09,DuKo21}. In particular, it does not decay for $|\xi|\rightarrow\infty$, whereas $\hat f$ does. 
The error of the midpoint rule in \eqref{eq:f_tilde_approx} depends on the smoothness of the underlying function
\begin{equation}\label{eq:g}
g(y)\coloneqq \frac{f(\Psi^{-1}(y))}{\mu(\Psi^{-1}(y))}\e^{-2\pi\im \Psi^{-1}(y) \xi}.
\end{equation}
This function $g$ is periodic on the interval $[0,1]$ if $f\in \F_v$ and
$$\lim_{x\rightarrow \pm \infty} v(x)\mu(x)=\infty, $$
since in this case
$$g(1)=g(0) = \lim_{x\rightarrow \pm \infty} \frac{f(x)}{\mu(x)}\e^{-2\pi\im x} = 0.$$
For the function $g\colon[0,1]\rightarrow\C$, we denote the Fourier coefficients by
$$c_k(g) \coloneqq \int_0^1 g(y) \mathrm{e}^{-2\pi\mathrm{i} k y} \,\mathrm{d}y.$$
The following theorem states an error estimate of the midpoint rule in \eqref{eq:f_tilde_approx} using the decay of the Fourier coefficients of $g$.
\begin{theorem}\label{thm:aliasing_midpoint}
Let $g\colon [0,1) \to \mathbb{C}$ be a function whose Fourier series converges absolutely. Let the exact integral of $g$ over one period be $c_0(g)$. The numerical approximation by the midpoint rule with $M \in \mathbb{N}$ equispaced subintervals is given by
\begin{equation*}
    I_M(g) \coloneqq \frac{1}{M} \sum_{j=1}^M g\left( \frac{2j-1}{2M} \right).
\end{equation*}
Then, the quadrature error satisfies the identity
\begin{equation*}
    I_M(g) -c_0(g)= \sum_{k \in \mathbb{Z} \setminus \{0\}} (-1)^{k} c_{k M}(g).
\end{equation*}
\end{theorem}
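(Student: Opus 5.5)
We expand $g$ in its Fourier series, $g(y)=\sum_{\ell\in\Z} c_\ell(g)\e^{2\pi\im \ell y}$. Because the series converges absolutely, it converges uniformly, so it holds pointwise at every node $y_j=\frac{2j-1}{2M}$. Here we read the hypothesis as saying that $g$ is continuous and $1$-periodic, so that the series represents $g$ at every point. In the paper's setting this holds since $g(0)=g(1)=0$. Absolute convergence also lets us exchange the finite sum over $j$ with the sum over $\ell$:
\begin{equation*}
I_M(g)=\sum_{\ell\in\Z} c_\ell(g)\,\frac1M\sum_{j=1}^M \e^{2\pi\im \ell\frac{2j-1}{2M}}.
\end{equation*}

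The key step is to evaluate the inner discrete sum $S_\ell:=\frac1M\sum_{j=1}^M \e^{2\pi\im \ell(2j-1)/(2M)}$. We factor out $\e^{-\pi\im \ell/M}$, which leaves the geometric sum $\frac1M\sum_{j=1}^M \omega^{j}$ with $\omega=\e^{2\pi\im \ell/M}$.

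If $M\nmid \ell$, then $\omega\neq1$ and $\omega^M=1$. The geometric sum therefore vanishes, and $S_\ell=0$.

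If $\ell=kM$, then every term of the geometric sum equals $1$, so $S_\ell=\e^{-\pi\im k}=(-1)^k$.

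Consequently $I_M(g)=\sum_{k\in\Z}(-1)^k c_{kM}(g)$. Separating the term $k=0$, which equals $c_0(g)$, gives exactly the claimed identity $I_M(g)-c_0(g)=\sum_{k\neq0}(-1)^kc_{kM}(g)$.

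There is no real obstacle here. The only points needing care are the pointwise validity of the Fourier series at the nodes and the interchange of summations. Both follow from absolute convergence via dominated convergence, or more simply from the Weierstrass M-test with majorant $\sum_\ell|c_\ell(g)|$.
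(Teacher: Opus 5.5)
Your proof is correct and follows the paper's argument: substitute the absolutely convergent Fourier series at the midpoint nodes, factor out $\e^{-\pi\im\ell/M}$, and use orthogonality of the $M$-th roots of unity so that only $\ell=kM$ survives, with factor $(-1)^k$; then separate $k=0$. Your remark that pointwise representation at the nodes requires $g$ to be continuous and periodic (absolute convergence alone gives only a.e.\ equality) is a small but welcome sharpening of the paper's wording.
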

\begin{proof}
Since the Fourier series of $g$ converges absolutely, we can represent $g$ pointwise by its Fourier series
\begin{equation*}\label{eq:FRg}
    g(y) = \sum_{k \in \mathbb{Z}} c_k(g) \e^{2\pi\im k y}.
\end{equation*}
Substituting this representation into the quadrature formula ${I}_M(g)$ yields
\begin{align*}
    I_M(g) &= \frac{1}{M} \sum_{j=1}^M \sum_{k \in \mathbb{Z}} c_k(g) \e^{2\pi\im k \frac{2j-1}{2M}}
     = \sum_{k \in \mathbb{Z}} c_k(g)\, \e^{-\frac{\pi\im k}{M}} \left( \frac{1}{M} \sum_{j=1}^M \e^{2\pi\im \frac{k\,j}{M} } \right).
\end{align*}
The inner sum over $j$ is a finite geometric series representing the sum of the $M$-th roots of unity. By the orthogonality properties of the roots of unity, this sum equals $M$ if $k$ is an integer multiple of $M$, and it is $0$ otherwise. 
Therefore, the outer sum collapses strictly to
\begin{align*}
    I_M(g) &= \sum_{k \in \mathbb{Z}} c_{kM}(g) \e^{-\pi\im k}= \sum_{k \in \mathbb{Z}} (-1)^kc_{k M}(g).
\end{align*}
Separating the term for $k = 0$ and noting that the zeroth Fourier coefficient corresponds to the exact integral, yields
\begin{equation*}
    I_M(g) = c_0(g) + \sum_{k\in \mathbb Z\setminus \{0\} } (-1)^k c_{k M}(g).
\end{equation*}
Subtracting $c_0(g)$ concludes the proof.
\end{proof}
For all trigonometric monomials $g(y)=\e^{2\pi \im ny}$ with a frequency $n\in \N$ that is not a multiple of $M$, this quadrature is exact. For a $1$-periodic function $g\in C^m$, the Fourier coefficients satisfy $|c_k(g)|=\mathcal O(|k|^{-m})$, see e.g.~\cite{PlPoStTa23}, such that the quadrature error decays at least at the rate $\mathcal O(M^{-m})$.\par
In order to apply the previous theorem to the function \eqref{eq:g} in our main approximation~\eqref{eq:f_tilde_approx} of the Fourier transform, we have to calculate the decay of the Fourier coefficients of the function $g$ for fixed $\xi$ with $|\xi|\le \Omega$, which we will do in Section~\ref{sec:rates} for different settings.

\subsection{Relation of our approach to equispaced sampling}\label{sec:relation}
To state the relation between our approach and the estimates in the equispaced case from~\cite{EhGrKl24}, we first note how equispaced sampling fits into our setting. It corresponds to the choice of the uniform density on an interval $[-\tfrac P2, \tfrac P2]$, which already contains a truncation of the space domain. In this case,
$$\mu(x)=\begin{cases}\frac 1P &\text{ if } -\tfrac P2 \leq x\leq \tfrac P2,\\
0 &\text{ otherwise},\end{cases}  \qquad \Psi(x) =\begin{cases}
0 &\text{ if } x<-\tfrac P2,\\
\frac{x}{P}+\tfrac{1}{2} &\text{ if }-\tfrac P2\leq x\leq \tfrac P2,\\
1 &\text{ if } x>\tfrac P2,\end{cases} $$
such that $\Psi^{-1}(y) = P\left(y-\tfrac 12\right)$ and the mapped integrand~\eqref{eq:g} becomes 
\[g(y) = P\, f\!\left(P\left(y-\tfrac 12\right)\right)\e^{-2\pi\im \xi P(y-1/2)}.\]
The midpoint nodes $y_j$ from~\eqref{eq:f_tilde_approx} are mapped to the equispaced points $x_j = P\,\tfrac{2j-1-M}{2M}$ with constant step size $h=\tfrac PM$, and
$$I_M(g) = h\sum_{j=1}^M f(x_j)\,\e^{-2\pi\im \xi x_j}$$
is exactly the Riemann sum in~\eqref{eq:f_hat_approx_equi}, up to a shift by half a step size. In contrast to a density with full support $\R$, we now have $c_0(g)\neq \hat f(\xi)$, but rather
$$c_0(g)=\int_{-\tfrac P2}^{\tfrac P2} f(x)\,\e^{-2\pi\im x\xi}\d x = \widehat{f_P}(\xi), \qquad f_P \coloneqq f\cdot {\bm 1}_{[-\frac P2,\frac P2]},$$ and
\begin{align*}
|I_M(g)-\hat f(\xi)| &=  |I_M(g)- c_0(g) +  c_0(g) -\hat f(\xi)  |\\
&\leq \underbrace{|I_M(g)- c_0(g)|}_{\text{discretization (aliasing) error}} + \underbrace{| c_0(g) -\hat f(\xi)  |}_{\text{truncation in space domain}}.
\end{align*}
These are precisely the two error sources considered in~\cite{EhGrKl24}.
The Fourier coefficients of the mapped integrand can be computed explicitly. Substituting $y=\Psi(x)$, we obtain
\begin{align*}
    c_{kM}(g) &= \int_0^1 g(y)\, \e^{-2\pi\im kMy}\d y
    = \int_{-P/2}^{P/2} f(x)\, \e^{-2\pi\im \xi x}\, \e^{-2\pi\im kM \left(\frac{x}{P} + \frac{1}{2}\right)} \,\d x\\
    &= (-1)^{kM}\, \widehat{f_P}\left(\xi +\frac{kM}{P}\right).
\end{align*}
Inserting this back into Theorem \ref{thm:aliasing_midpoint} yields
\begin{align*}
    I_M(g) - c_0(g) &= \sum_{k \in \mathbb{Z} \setminus \{0\}} (-1)^{k(M+1)}\, \widehat{f_P}\left(\xi+\frac{k}{h}\right),
\end{align*}
where we used $\tfrac{kM}{P}=\tfrac kh$. Hence, for equispaced sampling the discretization error is a pure aliasing sum. It collects the values of $\widehat{f_P}$ on the shifted grid $\xi + \tfrac 1h\Z$, and the alternating sign stems from the half-step offset of the midpoint nodes. This is the Poisson summation formula in disguise, see e.g.~\cite{PlPoStTa23}, and it explains the role of the two parameters in~\eqref{eq:f_hat_approx_equi}. The truncation parameter $P$ controls the truncation error $|\widehat{f_P}(\xi)-\hat f(\xi)|$ via the spatial decay of $f$, whereas the step size $h$ controls the aliasing error via the decay of $\hat f$.\par

In addition to~\cite{EhGrKl24}, a uniform error of the approximation with uniform samples was studied in~\cite{PoTa25}. This then allows the evaluation at any point within the interval.
\section{Error decay rates for non-equispaced sampling}\label{sec:rates}
In this section we present our main results. In Section~\ref{sec:polynomial} we study the case where the function $f$ and the Fourier transform $\hat f$ have polynomial decay and state error decay rates in Theorem~\ref{thm:error_hatf}. In Section~\ref{sec:exponential} we study the case of (sub-)exponential decay, where we state final bounds for the $L_p$-error in Corollary~\ref{cor:error_decay_exp}.

\subsection{Van der Corput's Lemma}
Here we state some result from the literature, which we will use later.
In the study of oscillatory integrals, a well-known result which is of fundamental importance is the van der Corput’s lemma,~\cite[Chapter VIII, Prop. 2]{Stein93}.

\begin{lemma}[Van der Corput's Lemma]
\label{lem:van_der_corput}
Suppose $\phi$ is real-valued and smooth in $[a, b]$ and that $|\phi^{(k)}(x)|\geq 1$ for all $x\in [a,b]$. Suppose that $k\geq 2$. Then 
$$\left|\int_{a}^b \e^{\im \lambda \phi(x)} \d x\right|\leq c_{k} \lambda^{-1/k},$$
where the constant $c_k$ is independent of $\phi$ and $\lambda$. If additionally $\psi\colon[a,b]\rightarrow\C$ is continuously differentiable, then
$$\left|\int_{a}^b \psi(x)\e^{\im \lambda \phi(x)} \d x\right|\leq c_{k} \lambda^{-1/k}\left(\norm{\psi}_{L_\infty([a,b])} + \norm{\psi'}_{L_1([a,b])}\right).$$
\end{lemma}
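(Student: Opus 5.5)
Since the statement is van der Corput's lemma, the plan is the classical argument from \cite[Chapter VIII, Prop. 2]{Stein93}: prove a first-derivative estimate, induct on $k$, and then pass to the weighted version by integrating by parts. Two remarks on the statement first. The hypothesis $|\phi^{(k)}|\ge 1$ with $k\ge 2$ means $\phi^{(k)}$ never vanishes, so by continuity it has constant sign; we may assume $\phi^{(k)}\ge 1$ (otherwise take complex conjugates). We also take $\lambda>0$, which is implicit in $\lambda^{-1/k}$.

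\textbf{Base step ($k=1$, monotone case).} Suppose $|\phi'|\ge 1$ and $\phi'$ is monotone on $[a,b]$. Write
\[
\e^{\im\lambda\phi}=\frac{1}{\im\lambda\phi'}\,\frac{\d}{\d x}\e^{\im\lambda\phi}
\]
and integrate by parts. The boundary terms contribute at most $2/\lambda$. The remaining integral is bounded by
\[
\lambda^{-1}\int_a^b\left|\left(\tfrac{1}{\phi'}\right)'\right|\d x .
\]
Since $1/\phi'$ is monotone, this integral equals $|1/\phi'(b)-1/\phi'(a)|\le 1$. Hence the integral is at most $3\lambda^{-1}$.

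\textbf{Induction step ($k-1\to k$).} Since $\phi^{(k)}\ge 1$, the function $\phi^{(k-1)}$ is strictly increasing. So there is at most one point $c$ at which $|\phi^{(k-1)}|$ is smallest, and $|\phi^{(k-1)}(x)|\ge \delta$ whenever $|x-c|\ge\delta$. Split $[a,b]$ into $(c-\delta,c+\delta)$ and at most two outer intervals.
\begin{itemize}
\item The middle piece is bounded trivially by $2\delta$.
\item On the outer pieces, apply the induction hypothesis to $\phi/\delta$ with parameter $\lambda\delta$. For $k-1=1$, monotonicity of $\phi'$ holds automatically because $\phi''\ge 1$, so the base step applies. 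This gives a bound $c_{k-1}(\lambda\delta)^{-1/(k-1)}$ for each outer piece.
\end{itemize}
Choosing $\delta=\lambda^{-1/k}$ balances the two contributions and yields $c_k=2c_{k-1}+2$. This balancing is the only delicate step: the bound $|\phi^{(k-1)}|\ge\delta$ must hold off the $\delta$-neighbourhood regardless of where $c$ lies, including when $c$ lies outside $[a,b]$.

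\textbf{Weighted version.} Set $F(x)=\int_a^x \e^{\im\lambda\phi(t)}\d t$. Then
\[
\int_a^b\psi\,\e^{\im\lambda\phi}\,\d x=\psi(b)F(b)-\int_a^b F\,\psi'\,\d x .
\]
Every subinterval $[a,x]$ satisfies the same hypotheses, so the unweighted bound applies uniformly: $\sup_x|F(x)|\le c_k\lambda^{-1/k}$. Inserting this into both terms gives
\[
\left|\int_a^b\psi\,\e^{\im\lambda\phi}\,\d x\right|\le c_k\lambda^{-1/k}\left(\norm{\psi}_{L_\infty([a,b])}+\norm{\psi'}_{L_1([a,b])}\right),
\]
which is the claimed estimate.
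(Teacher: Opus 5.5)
Your proposal is correct and follows the classical argument from Stein that the paper cites: the monotone first-derivative estimate with $c_1=3$, induction on $k$ with $\delta=\lambda^{-1/k}$ giving $c_k=2c_{k-1}+2$, and part two by integrating by parts against $F(x)=\int_a^x \e^{\im\lambda\phi(t)}\d t$. The paper gives no proof of its own beyond that citation and the remark that part two follows from part one by integration by parts, so your route is the same as the paper's.
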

Part one of the above lemma is what is classically called van der Corput’s
lemma. Part two (which follows from the first part by integration by parts), is the version one usually finds convenient to use in many applications.
\subsection{Polynomial decay in time and frequency}\label{sec:polynomial}
We discuss here the special case of functions of polynomial spatial decay and also polynomial spectral regularity. In contrast to~\cite{EhGrKl24}, where the polynomial weights $(1+|x|)^a$ are used, we use here $v_a(x) = (1+x^2)^{a/2}$, which have the same asymptotical decay, but we need the additional smoothness at the midpoint $x=0$. Finite smoothness is often replaced by polynomial decay of the Fourier transform. If the derivatives $f^{(\ell)}$ are in $L_1(\R)$ for $\ell = 0,\ldots, b$ with $b\in \N$, then it follows that $\sup_{\xi\in \R} |\hat f(\xi)|(1+\xi^2)^{b/2}<\infty$.\par

Let $v(x) = (1+x^2)^{a/2}$ and $w(\xi) = (1+\xi^2)^{b/2}$ be weight functions. Let $f\in \F_{v,m}  $ with $m\in \{1,2,\ldots, b-1\}$ and $\hat f\in \F_{w}$. 
We choose the polynomial distribution 
\begin{equation}\label{eq:mu_pol}
\mu(x) = C_{\alpha,\sigma} \left(1+\frac{x^2}{\sigma^2}\right)^{-\alpha/2}, \quad C_{\alpha,\sigma} = \frac{1}{\sigma} \frac{\Gamma(\tfrac \alpha 2)}{\sqrt{\pi}\,\Gamma(\tfrac{\alpha -1}{2})},
\end{equation}
and the parameters $\alpha, \sigma >0$ have to be chosen appropriately depending on spatial decay $a$ and spectral regularity $b$.
Sometimes we will omit the index $\sigma$ in the constant $C_{\alpha,\sigma}$, if the variance equals $\sigma=1$.
In the following we analyze the error terms and study how this parameter $\alpha$ has to be chosen to balance the error terms.
As a preliminary result we calculate the derivatives of the function $\mu$.

\begin{lemma}
For polynomial density $\mu$ defined in~\eqref{eq:mu_pol} with $\sigma=1$ and $\ell \in \mathbb{N}_0$ and $x \in \mathbb{R}$, the derivatives of $\mu$ are bounded by
\begin{equation}\label{eq:ratio_mu}
\left| \frac{\mu^{(\ell)}(x)}{\mu(x)} \right|
\leq \frac{\Gamma(\alpha+\ell)}{\Gamma(\alpha)} (1+x^2)^{-\ell/2}.
\end{equation}
\end{lemma}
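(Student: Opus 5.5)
Since the constant $C_{\alpha,1}$ cancels in the ratio, we may take $\mu(x)=(1+x^2)^{-\alpha/2}$ and write $u(x)=1+x^2$. We prove the sharper statement
\[
\mu^{(\ell)}(x)=\mu(x)\,u(x)^{-\ell}\,P_\ell(x),
\]
where $P_\ell$ is a polynomial of degree at most $\ell$ whose coefficients are controlled explicitly. Then we bound $|P_\ell(x)|\le \frac{\Gamma(\alpha+\ell)}{\Gamma(\alpha)}\,u(x)^{\ell/2}$. The case $\ell=0$ is trivial ($P_0=1$).

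\textbf{Step 1: the recursion.} Differentiating $\mu u^{-\ell}P_\ell$ and using $\mu'=-\alpha x\mu/u$ and $u'=2x$ gives
\[
P_{\ell+1}=u\,P_\ell'-(\alpha+2\ell)\,x\,P_\ell .
\]
This shows that $\deg P_{\ell+1}\le \ell+1$.

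\textbf{Step 2: the weighted bound by induction.} We want $\sup_x |P_\ell(x)|\,u^{-\ell/2}\le (\alpha)_\ell:=\Gamma(\alpha+\ell)/\Gamma(\alpha)$. A bound on $|P_\ell|$ alone says nothing about $P_\ell'$, so the induction must carry more structure. There are two routes.

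\emph{Route A (coefficient bound).} Substitute $x=\tan\theta$, so that $u^{-1/2}=\cos\theta$ and $x\,u^{-1/2}=\sin\theta$. Then $Q_\ell(\theta):=P_\ell(\tan\theta)\cos^\ell\theta$ is a trigonometric polynomial of degree $\ell$, and $\frac{d}{dx}=\cos^2\theta\,\frac{d}{d\theta}$. Rewriting the recursion in $\theta$:
\[
Q_{\ell+1}=\cos^{\ell+1}\theta\bigl(\sec^2\theta\cdot\cos^2\theta\,\partial_\theta(Q_\ell\sec^\ell\theta)-(\alpha+2\ell)\tan\theta\,Q_\ell\sec^\ell\theta\bigr).
\]
Expanding $\partial_\theta(Q_\ell\sec^\ell\theta)=Q_\ell'\sec^\ell\theta+\ell\,Q_\ell\sec^\ell\theta\tan\theta$, this simplifies to
\[
Q_{\ell+1}=\cos\theta\,Q_\ell'-(\alpha+\ell)\sin\theta\,Q_\ell .
\]
Write $Q_\ell=\sum_{|k|\le \ell} q_k e^{\im k\theta}$ and set $A_\ell=\sum_k|q_k|$. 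Multiplying by $\cos\theta$ or $\sin\theta$ does not increase the coefficient sum, and differentiation multiplies $q_k$ by $\im k$ with $|k|\le\ell$. Hence
\[
A_{\ell+1}\le \ell A_\ell+(\alpha+\ell)A_\ell=(\alpha+2\ell)A_\ell .
\]
This yields $\prod_{i<\ell}(\alpha+2i)$, which is weaker than $(\alpha)_\ell$. To reach $(\alpha+\ell)$ per step we need a sharper estimate.

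\emph{Route B (complex analysis).} This is the route we would try first. Cauchy's estimate on the disc of radius $r=c\sqrt{u(x)}$ around $x$ gives $|\mu^{(\ell)}(x)|\le \ell!\,r^{-\ell}\max_{|z-x|=r}|\mu(z)|$. For the poles at $\pm\im$ to stay outside the disc we need $r<\sqrt{u(x)}$, and the resulting constant has the form $\ell!\,C^\ell$. That does not match $(\alpha)_\ell$ for small $\ell$ either.

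Since the pure-exponent form $(\alpha)_\ell$ is suggestive, the cleanest fix is to go back to the $\theta$-recursion and use the $\sup$ norm instead of coefficient sums. Bernstein's inequality gives $\|Q_\ell'\|_\infty\le\ell\|Q_\ell\|_\infty$ for a trigonometric polynomial of degree $\ell$. Together with $|\cos\theta|,|\sin\theta|\le 1$, this gives $\|Q_{\ell+1}\|_\infty\le(\alpha+2\ell)\|Q_\ell\|_\infty$, the same as Route A.

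\textbf{Step 3: the main obstacle.} Improving the per-step factor from $\alpha+2\ell$ to $\alpha+\ell$ is the main obstacle. We would attempt it by combining the two terms at once. The expression $\cos\theta\,Q'-(\alpha+\ell)\sin\theta\,Q$ equals the real-direction derivative of $Q$ times a rotation, and Bernstein–Szegő-type bounds $|\cos\theta\,Q'-\ell\sin\theta\,Q|\le\ell\|Q\|_\infty$ then give $\ell+\alpha$ in place of $2\ell+\alpha$.

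If this fails, we would accept the constant $\prod_{i<\ell}(\alpha+2i)=2^\ell\,\Gamma(\alpha/2+\ell)/\Gamma(\alpha/2)$. It has the same $(1+x^2)^{-\ell/2}$ decay, which is the only feature used later.
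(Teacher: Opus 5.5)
Your reduction is correct. The recursion $P_{\ell+1}=uP_\ell'-(\alpha+2\ell)xP_\ell$, the substitution $x=\tan\theta$, and the resulting identity $Q_{\ell+1}=\cos\theta\,Q_\ell'-(\alpha+\ell)\sin\theta\,Q_\ell$, with $|\mu^{(\ell)}/\mu|=(1+x^2)^{-\ell/2}|Q_\ell(\theta)|$, all check out.

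The gap is that the lemma asserts the specific constant $\Gamma(\alpha+\ell)/\Gamma(\alpha)=\prod_{i<\ell}(\alpha+i)$. Everything you actually prove, via Route A or Bernstein's inequality, gives only $\prod_{i<\ell}(\alpha+2i)$, which is strictly larger for $\ell\ge 2$. Your fallback therefore does not prove the statement. The decisive step, $\|Q_{\ell+1}\|_\infty\le(\alpha+\ell)\|Q_\ell\|_\infty$, is only announced in Step 3, and ``real-direction derivative times a rotation'' is not an argument. Bernstein's inequality cannot close it, because it bounds $\cos\theta\,Q_\ell'$ and $\ell\sin\theta\,Q_\ell$ separately and so pays the factor $\ell$ twice.

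The inequality you guessed is true, and the missing ingredient is the van der Corput--Schaake (Szeg\H{o}) refinement of Bernstein. For a real trigonometric polynomial $T$ of degree at most $n$, it states $T'(\theta)^2+n^2T(\theta)^2\le n^2\|T\|_\infty^2$ for every $\theta$. Since $P_\ell$ has real coefficients, $Q_\ell$ is a real trigonometric polynomial of degree at most $\ell$. Writing $\cos\theta\,Q_\ell'-\ell\sin\theta\,Q_\ell=(\cos\theta,-\sin\theta)\cdot(Q_\ell',\ell Q_\ell)$, Cauchy--Schwarz gives $|\cos\theta\,Q_\ell'-\ell\sin\theta\,Q_\ell|\le(Q_\ell'^2+\ell^2Q_\ell^2)^{1/2}\le\ell\|Q_\ell\|_\infty$. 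Adding $|\alpha\sin\theta\,Q_\ell|\le\alpha\|Q_\ell\|_\infty$ and inducting from $Q_0=1$ yields exactly $\Gamma(\alpha+\ell)/\Gamma(\alpha)$.

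The paper avoids this inequality by identifying $Q_\ell$ explicitly. Expanding $\mu(x-h)$ with the Gegenbauer generating function gives $Q_\ell(\theta)=(-1)^\ell\ell!\,C_\ell^{(\alpha/2)}(\sin\theta)$. In the variable $y=\sin\theta$, your recursion $Q_{\ell+1}=(1-y^2)\partial_yQ_\ell-(\alpha+\ell)yQ_\ell$ is exactly the recurrence satisfied by $(-1)^\ell\ell!\,C_\ell^{(\alpha/2)}(y)$. The paper then uses Szeg\H{o}'s theorem that, for $\lambda>0$, $|C_\ell^{(\lambda)}|$ attains its maximum on $[-1,1]$ at $y=\pm1$, which gives $\ell!\,C_\ell^{(\alpha/2)}(1)=\Gamma(\alpha+\ell)/\Gamma(\alpha)$. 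Either route completes your argument; without one of them, the proposal establishes only the weaker constant.
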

\begin{proof}
We construct the Taylor series of $\mu$ by
\begin{align*}
\mu(x-h) &= C_{\alpha,1} \left(1+(x-h)^2\right)^{-\alpha/2} =  C_{\alpha,1}\left(1+x^2-2xh+h^2\right)^{-\alpha/2} \\
&= C_{\alpha,1}\left(\frac{1}{1+x^2}\right)^{\alpha/2}\left(1-\frac{2xh}{1+x^2}+\frac{h^2}{1+x^2}\right)^{-\alpha/2} .
\end{align*}
We substitute $y = \frac{x}{\sqrt{1+x^2}}$ and $t= \frac{h}{\sqrt{1+x^2}}$. Then, we use the fact that the Gegenbauer polynomials can be described in terms of their generating function,
$$\left(\frac{1}{1-2yt+t^2}\right)^{\alpha/2}= \sum_{\ell=0}^\infty C_{\ell}^{(\alpha/2)}(y)t^\ell.$$
This yields
\begin{align*}
\mu(x-h) &=C_{\alpha,1} \left(\frac{1}{1+x^2}\right)^{\alpha/2}\sum_{\ell=0}^\infty C_{\ell}^{(\alpha/2)}(y)t^\ell\\
&=C_{\alpha,1}\sum_{\ell=0}^\infty  \left(\frac{1}{1+x^2}\right)^{\alpha/2+\ell/2} C_{\ell}^{(\alpha/2)}\left(\frac{x}{\sqrt{1+x^2}}\right)h^\ell.
\end{align*}
The generating function converges for $|t|<1$, i.e., for $|h|<\sqrt{1+x^2}$, which is precisely the radius of convergence of the Taylor series of $\mu$ around $x$, so that comparing the coefficients before $h^\ell$ with the classical formula for the Taylor series leads to
$$\frac{(-1)^\ell}{\ell!}\mu^{(\ell)}(x) = C_{\alpha,1}\left(\frac{1}{1+x^2}\right)^{\alpha/2+\ell/2} C_{\ell}^{(\alpha/2)}\left(\frac{x}{\sqrt{1+x^2}}\right).$$
 Finally,
$$\left|\frac{\mu^{(\ell)}(x)}{\mu(x)}\right| =\ell!\left(\frac{1}{1+x^2}\right)^{\ell/2}\left| C_{\ell}^{(\alpha/2)}\left(\frac{x}{\sqrt{1+x^2}}\right) \right|.$$
Since the Gegenbauer polynomials with parameter $\tfrac{\alpha}{2}>0$ attain their maximal absolute value on $[-1,1]$ at the endpoints, see~\cite[Theorem~7.33.1]{Sz75}, they are bounded by
$$\left|C_\ell^{(\alpha/2)}(y) \right|\leq C_\ell^{(\alpha/2)}(1) = \frac{\Gamma(\ell+\alpha)}{\ell!\, \Gamma(\alpha)}.$$
and the result follows.
\end{proof}

\begin{theorem}
\label{thm:fourier_coeffs}
Let $v(x) = (1+x^2)^{a/2}$ and $w(\xi) = (1+\xi^2)^{b/2}$ be weight functions with spatial decay $a>1$ and spectral regularity $b>1$. Let $f$ be a complex-valued function such that $f\in \mathcal F_{v,m}$ and $\hat f\in \F_{w}$ with $m\in \{1,2,\ldots, b-1\}$.
Let $\mu$ be a polynomial probability density function defined in~\eqref{eq:mu_pol} with $\alpha>1$ and $\sigma=1$, and define its cumulative distribution function as $\Psi(x) = \int_{-\infty}^x \mu(t) \,\mathrm{d}t$. Fix $\xi\in [-\Omega,\Omega]$. Furthermore, let
\begin{equation}\label{eq:cond_alpha}
\alpha <\frac{a-1}{m}+1
\end{equation}
and $g$ be as defined in~\eqref{eq:g}.
In the resonant case $0<-\tfrac{\xi}{k}\leq \mu(0)$, let $x_0> 0$ be determined by $\mu(x_0)=-\tfrac \xi k$, and set
\begin{equation*}
\delta \coloneqq \min\left\{\tfrac 12,\tfrac{x_0}{4}\right\}, \qquad
D_\delta \coloneqq \left\{x\in\R \colon |x-x_0|\geq \delta \text{ and } |x+x_0|\geq \delta\right\}, \qquad
\Lambda \coloneqq \frac{1+x_0^2}{x_0\,\delta}.
\end{equation*}
Then, the decay of the Fourier coefficients $c_k(g)$ is bounded by
\begin{small}
    \begin{equation*}
        |c_k(g)| \lesssim_{\alpha,m}  \begin{cases}
        \norm{f}_{\mathcal F_{v,m}} \displaystyle\int_{-\infty}^\infty  \frac{1}{(1+x^2)^{(a+m)/2}\,|\xi+ k\mu(x)|^m} \d x & \hspace*{-35pt}\text{if } \tfrac{\xi}{k}\geq 0 \text{ or } -\tfrac{\xi}{k}\geq 2\mu(0), \\[2ex]
        \Lambda^m \norm{f}_{\mathcal F_{v,m}} \displaystyle\int_{D_\delta}  \frac{1}{(1+x^2)^{(a+m)/2}\,|\xi+ k\mu(x)|^m} \d x + \frac{\norm{f}_{\mathcal F_{v,1}} }{v(x_0)\sqrt{|k \mu'(x_0)|}} &\text{if } 0< -\tfrac{\xi}{k}\leq \mu(0).
        \end{cases}
    \end{equation*}%
    \end{small}%
The remaining transitional regime $\mu(0)<-\tfrac{\xi}{k}<2\mu(0)$ is discussed in Remark~\ref{rem:resonant}.
\end{theorem}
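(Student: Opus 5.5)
The plan is to pull the Fourier coefficient back to the real line and then integrate by parts $m$ times in $x$. With the substitution $y=\Psi(x)$, $\d y=\mu(x)\d x$, we get
\begin{equation*}
c_k(g)=\int_{-\infty}^{\infty} f(x)\,\e^{-2\pi\im \phi_k(x)}\d x,\qquad \phi_k(x)\coloneqq \xi x+k\Psi(x),\qquad \phi_k'(x)=\xi+k\mu(x).
\end{equation*}
So the phase is stationary exactly where $\mu(x)=-\xi/k$. This explains the case distinction. If $\xi/k\ge 0$, the quantity $\xi+k\mu$ never vanishes, since $\mu>0$. If $-\xi/k\ge 2\mu(0)$, then $|\xi+k\mu(x)|\ge |k|\mu(0)>0$. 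In both cases there is no stationary point.

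\textbf{Non-resonant case.} We write $\e^{-2\pi\im\phi_k}=\frac{-1}{2\pi\im\phi_k'}\frac{\d}{\d x}\e^{-2\pi\im\phi_k}$ and integrate by parts $m$ times, applying the operator $L^{*}u=\frac{\d}{\d x}\bigl(u/\phi_k'\bigr)$ repeatedly. The boundary terms vanish. Indeed, each intermediate term behaves like $f^{(j)}\mu^{(i)}\cdots/(\phi_k')^{r}$. This tends to $0$ at $\pm\infty$, because $|f^{(j)}|\lesssim \norm{f}_{\F_{v,m}}(1+x^2)^{-(a+j)/2}$, and because $|\phi_k'|^{-1}\lesssim 1/(|k|\mu)$ in the worst case grows only like $(1+x^2)^{\alpha/2}$. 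Here we use condition \eqref{eq:cond_alpha}: it gives $a+m-m\alpha>1$, hence integrability and decay.

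By the Faà di Bruno/Leibniz rule, $(L^*)^m f$ is a finite sum of terms
\begin{equation*}
f^{(j)}\prod_i \frac{k\mu^{(\ell_i)}}{\phi_k'},\qquad \text{divided by } (\phi_k')^{m-\#},
\end{equation*}
with $j+\sum\ell_i=m$. For the ratios we use the bound \eqref{eq:ratio_mu}: $|k\mu^{(\ell)}|\le \frac{\Gamma(\alpha+\ell)}{\Gamma(\alpha)}|k|\mu(1+x^2)^{-\ell/2}$. The key observation is then $|k|\mu(x)\le|\xi+k\mu(x)|$ when $\xi/k\ge0$, and $|k|\mu(x)\le |\xi+k\mu(x)|$ also holds when $-\xi/k\ge2\mu(0)$, because then $|\xi|\ge 2|k|\mu\ge$ and so $|\xi+k\mu|\ge |k|\mu$. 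Hence each factor $k\mu^{(\ell)}/\phi_k'$ is $\lesssim (1+x^2)^{-\ell/2}$. Every term is therefore bounded by
\begin{equation*}
\norm{f}_{\F_{v,m}}(1+x^2)^{-(a+m)/2}|\phi_k'|^{-m},
\end{equation*}
with constants depending only on $\alpha,m$. This gives the first case.

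\textbf{Resonant case.} We split with a smooth partition of unity $\chi_0+\chi_1=1$, where $\chi_0$ is supported in the $\delta$-neighbourhoods of $\pm x_0$ (the stationary points, by symmetry of $\mu$) and equals $1$ on the $\delta/2$-neighbourhoods. On the outer part we repeat the integration by parts. Now $|k|\mu$ is no longer dominated by $|\phi_k'|$. However, on $D_\delta$ we have
\begin{equation*}
|\phi_k'(x)|=|k||\mu(x)-\mu(x_0)|\gtrsim |k|\,|\mu'|\,\delta .
\end{equation*}
Together with $|\mu'|/\mu\sim |x|/(1+x^2)$ near $x_0$, this gives $|k|\mu/|\phi_k'|\lesssim \Lambda$. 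Derivatives of $\chi_1$ contribute factors $\delta^{-1}\le\Lambda$ as well. This yields the $\Lambda^m$ integral over $D_\delta$.

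On the inner part around $x_0$ we apply van der Corput (Lemma~\ref{lem:van_der_corput}) with $k=2$. We take $\lambda=2\pi|k\mu'(x_0)|$ and rescale $\phi_k$ so that $|\phi_k''|=|k\mu'|\ge \tfrac12|k\mu'(x_0)|$ on $[x_0-\delta,x_0+\delta]$, using that $\delta\le x_0/4$ keeps $\mu'$ comparable to $\mu'(x_0)$. The amplitude is $\psi=f\chi_0$. Its sup is $\lesssim \norm{f}_{\F_{v,1}}/v(x_0)$, and $\norm{\psi'}_{L_1}\lesssim \norm{f}_{\F_{v,1}}/v(x_0)$, using $\delta\le 1/2$ and $(1+x^2)^{1/2}|f'|v\le\norm{f}_{\F_{v,1}}$ with $v$ comparable on the interval. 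This gives the term $\norm{f}_{\F_{v,1}}/(v(x_0)\sqrt{|k\mu'(x_0)|})$. The point $-x_0$ is handled symmetrically.

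\textbf{Main obstacle.} The hardest step is the uniform lower bound $|\mu(x)-\mu(x_0)|\gtrsim |\mu'(x_0)|\min(|x-x_0|,\dots)$ on $D_\delta$, including the far tails and the region near $0$, in a form that yields exactly the factor $\Lambda$. I would prove it by monotonicity of $\mu$ on $(0,\infty)$ together with the mean value theorem on $[x_0,x_0\pm\delta]$, and then extend it to the rest of $D_\delta$ by monotonicity.
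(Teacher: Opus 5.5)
Your proposal is correct and follows the paper's proof essentially step for step. You pull back via $y=\Psi(x)$, integrate by parts $m$ times using \eqref{eq:ratio_mu} and $|k|\mu\le|\xi+k\mu|$ in the non-resonant case, and in the resonant case split smoothly, apply van der Corput of order two near $\pm x_0$, and get a factor $\Lambda$ per step on $D_\delta$ from the mean value theorem near $x_0$ and monotonicity of $\mu$. Your treatment of the boundary terms via \eqref{eq:cond_alpha}, and of the cutoff derivatives, is actually more careful than the paper's. A few bookkeeping points need fixing:
\begin{itemize}
\item After pulling out the $\#$ factors $k\mu^{(\ell_i)}/\phi_k'$, the remaining power is $(\phi_k')^{-m}$, not $(\phi_k')^{-(m-\#)}$. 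Your final bound already uses the correct power.
\item The comparability of $\mu'$ with $\mu'(x_0)$ holds only with some constant $c_\alpha$, not $\tfrac12$.
\item A derivative landing on the cutoff must be bounded by $\delta^{-1}\le\Lambda(1+x_0^2)^{-1/2}$, not merely by $\Lambda$, to keep the weight $(1+x^2)^{-(a+m)/2}$. This holds, since $(1+x_0^2)^{1/2}\ge x_0$.
\item Take the cutoff equal to $1$ on the $\delta$-neighbourhoods of $\pm x_0$ and vanishing outside the $2\delta$-neighbourhoods, as in the paper. This is admissible since $2\delta\le x_0/2$. With your scaling the regular part lives on $D_{\delta/2}$, not $D_\delta$.
\end{itemize}
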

\begin{proof}
We evaluate the Fourier coefficients by applying the substitution $y = \Psi(x)$, which implies $x = \Psi^{-1}(y)$ and the differential relation $\mathrm{d}y = \mu(x) \,\mathrm{d}x$. Substituting this into the definition of $c_k(g)$ yields
\begin{align}\label{eq:c_k_g}
    c_k(g) &= \int_{-\infty}^\infty \frac{f(x)}{\mu(x)} \e^{-2\pi\im \xi x} \e^{-2\pi\im k \Psi(x)} \mu(x) \,\mathrm{d}x =\int_{-\infty}^\infty f(x) \e^{-2\pi\im (\xi x + k \Psi(x))}  \mathrm{d}x\notag\\
    &= \int_{-\infty}^\infty f(x) \e^{-2\pi\im \Theta_{k,\xi}(x)}  \mathrm{d}x ,
\end{align}
where we define $\Theta_{k,\xi}(x) \coloneqq \xi x + k \Psi(x)$. The derivatives of the phase are
\begin{equation*}
    \Theta_{k,\xi}'(x) = \xi + k \mu(x), \quad \text{and} \quad \Theta_{k,\xi}^{(r)}(x) = k \mu^{(r-1)}(x) \quad \text{ for }r\geq 2. 
\end{equation*}
Now we distinguish whether the phase $\Theta_{k,\xi}$ has a stationary point on $\R$ or not. Since $\mu$ takes its values in $(0,\mu(0)]$, the derivative $\Theta_{k,\xi}'(x)=\xi+k\mu(x)$ vanishes for some $x\in \R$ if and only if $\xi$ and $k$ have opposite signs and $0<-\tfrac{\xi}{k}\leq\mu(0)$.
\par
\paragraph{Case 1: $\tfrac{\xi}{k}\geq0$ or $-\tfrac{\xi}{k}\geq 2\mu(0)$.} In this case $\Theta_{k,\xi}'$ does not vanish, and moreover
\begin{equation}\label{eq:case1_lower}
|\Theta_{k,\xi}'(x)|=|\xi+k\mu(x)|\geq \tfrac14\big(|\xi|+|k|\mu(x)\big)\geq \tfrac14|k|\mu(x),\qquad x\in\R.
\end{equation}
Indeed, for $\tfrac{\xi}{k}\geq0$ we even have $|\xi+k\mu(x)|=|\xi|+|k|\mu(x)$, while for $-\tfrac{\xi}{k}\geq2\mu(0)$ we have $|k|\mu(x)\leq|k|\mu(0)\leq\tfrac12|\xi|$, hence $|\xi+k\mu(x)|\geq|\xi|-|k|\mu(x)\geq\tfrac12|\xi|\geq\tfrac14(|\xi|+|k|\mu(x))$. In the following we carry out the estimates for $\tfrac{\xi}{k}\geq0$, where $|\xi+k\mu(x)|=|\xi|+|k|\mu(x)$. The subcase $-\tfrac{\xi}{k}\geq2\mu(0)$ is identical up to the factor $4^m$ from~\eqref{eq:case1_lower}, which is absorbed into $\lesssim_{\alpha,m}$. Thus, we may apply integration by parts using the identity $\e^{-2\pi\im \Theta_{k,\xi}} = (-2\pi\im \Theta_{k,\xi}')^{-1} \frac{\mathrm{d}}{\mathrm{d}x} \e^{-2\pi\im \Theta_{k,\xi}}$,
\begin{equation}\label{eq:c_k1}
    c_k(g) = \frac{1}{2\pi\im} \int_{-\infty}^\infty \frac{\mathrm{d}}{\mathrm{d}x} \left( \frac{f(x)}{\Theta'_{k,\xi}(x)} \right) \e^{-2\pi\im \Theta_{k,\xi}(x)} \,\mathrm{d}x.
\end{equation}
The boundary terms vanish since $\lim_{|x| \to \infty} f(x) = 0$. 
For the case where $m=1$, we receive by using the estimate \eqref{eq:ratio_mu},
\begin{align*}
    |c_k(g)| &= \left|\frac{1}{2\pi\im} \int_{-\infty}^\infty \left( \frac{f'(x)}{\Theta'_{k,\xi}(x) }- \frac{f(x)\Theta''_{k,\xi}(x)}{(\Theta'_{k,\xi}(x))^2} \right) \e^{-2\pi\im \Theta_{k,\xi}(x)} \d x\right|\\
    &\leq \frac{1}{2\pi} \int_{-\infty}^\infty \left|\frac{f'(x)}{\Theta'_{k,\xi}(x) }- \frac{f(x)\Theta''_{k,\xi}(x)}{(\Theta'_{k,\xi}(x))^2} \right|  \d x\\
    &\leq \frac{1}{2\pi} \int_{-\infty}^\infty \left|\frac{f'(x)}{\xi+k\mu(x) }\right|+\left| \frac{f(x)k \mu'(x)}{(\xi+k\mu(x) )^2} \right|  \d x\\
    &\leq  \frac{1}{2\pi} \int_{-\infty}^\infty \left|\frac{f'(x)}{\xi+k\mu(x) }\right|+\left| \frac{\alpha f(x)}{(1+|x|^2)^{1/2}(\xi+k\mu(x) )} \right|  \d x\\
    &\leq \frac{\norm{f}_{v,1}(1+\alpha)}{2\pi} \int_{-\infty}^\infty \left| \frac{1}{v(x)(1+|x|^2)^{1/2}(\xi+k\mu(x) )} \right|  \d x.
\end{align*}

For $m=2$ we use integration by parts again in~\eqref{eq:c_k1}. The boundary terms vanish since additionally $\lim_{|x| \to \infty} f'(x) = 0$, such that,
\begin{small}
\begin{align*}
    c_k(g) &= \frac{1}{(2\pi\im)^2} \int_{-\infty}^\infty  \frac{\mathrm{d}}{\mathrm{d}x}\left(\frac{\mathrm{d}}{\mathrm{d}x} \left( \frac{f(x)}{\Theta'_{k,\xi}(x)} \right)\frac{1}{\Theta'_{k,\xi}(x)}\right) \e^{-2\pi\im \Theta_{k,\xi}(x)} \,\mathrm{d}x\\
    &=\frac{1}{(2\pi\im)^2} \int_{-\infty}^\infty  \frac{\mathrm{d}}{\mathrm{d}x}\left(  \frac{f'(x)\Theta'_{k,\xi}(x)-f(x)\Theta{''}_{k,\xi}(x)}{(\Theta'_{k,\xi}(x))^2} \frac{1}{\Theta'_{k,\xi}(x)} \right)\e^{-2\pi\im \Theta_{k,\xi}(x)} \,\mathrm{d}x\\
     &=\frac{1}{(2\pi\im)^2} \int_{-\infty}^\infty  \frac{\mathrm{d}}{\mathrm{d}x}\left(  \frac{f'(x)}{(\Theta'_{k,\xi}(x))^2}-\frac{f(x)\Theta{''}_{k,\xi}(x)}{(\Theta'_{k,\xi}(x))^3} \right) \e^{-2\pi\im \Theta_{k,\xi}(x)} \,\mathrm{d}x\\
    &=\frac{1}{(2\pi\im)^2} \int_{-\infty}^\infty  \left(  \frac{f''(x)(\Theta'_{k,\xi}(x))^2-2f'(x)(\Theta'_{k,\xi}(x))\Theta{''}_{k,\xi}(x)}{(\Theta'_{k,\xi}(x))^4}-\frac{(f'(x)\Theta{''}_{k,\xi}(x)+f(x)\Theta{'''}_{k,\xi}(x)) }{(\Theta'_{k,\xi}(x))^3}\right. \\
    &\quad +\left.\frac{3f(x)(\Theta{''}_{k,\xi}(x))^2(\Theta'_{k,\xi}(x))^2}{(\Theta'_{k,\xi}(x))^6} \right) \e^{-2\pi\im \Theta_{k,\xi}(x)} \,\mathrm{d}x\\
     &=\frac{1}{(2\pi\im)^2} \int_{-\infty}^\infty  \left(  \frac{f''(x)}{(\Theta'_{k,\xi}(x))^2}-\frac{3f'(x)\Theta{''}_{k,\xi}(x)}{(\Theta'_{k,\xi}(x))^3}-\frac{f(x)\Theta{'''}_{k,\xi}(x)) }{(\Theta'_{k,\xi}(x))^3} +\frac{3f(x)(\Theta{''}_{k,\xi}(x))^2}{(\Theta'_{k,\xi}(x))^4} \right) \e^{-2\pi\im \Theta_{k,\xi}(x)} \,\mathrm{d}x.
\end{align*}
\end{small}
Taking the absolute value inside the integral yields due to~\eqref{eq:ratio_mu},
\begin{small}
\begin{align*}
    |c_k(g)| &\leq\frac{1}{(2\pi)^2} \int_{-\infty}^\infty  \left| \frac{f''(x)}{(\xi+k\mu(x))^2}\right|+\left|\frac{3f'(x)\mu'(x)}{(\xi+k\mu(x))^2\mu(x)}\right|+\left|\frac{f(x)\mu''(x) }{(\xi+ k\mu(x))^2\mu(x)}\right| +\left|\frac{3f(x)(\mu'(x))^2}{(\xi+k\mu(x))^2\mu^2(x)} \right|\mathrm{d}x\\
    &\leq \frac{1+ 3\alpha+\alpha(\alpha+1) + 3\alpha^2}{(2\pi)^2}\norm{f}_{\mathcal F_{v,2}} \int_{-\infty}^\infty  \frac{1}{v(x)(1+|x|^2)(\xi+ k\mu(x))^2} \d x\\
       &= \frac{4\alpha^2 +4\alpha +1}{(2\pi)^2}\norm{f}_{\mathcal F_{v,2}} \int_{-\infty}^\infty  \frac{1}{v(x)(1+|x|^2)(\xi+ k\mu(x))^2} \d x.
\end{align*}
\end{small}
For $m>2$, we have to use integration by parts again. We omit the explicit calculations for this case here, but they can easily be generalized from the case where $m=2$. In all cases, $|\xi+k\mu(x)|\geq|k|\mu(x)$ and the condition~\eqref{eq:cond_alpha} ensure that the integral is finite, since the integrand decays like $(1+x^2)^{-(a+m)/2+\alpha m/2}$ and $a/2+m/2-\alpha m/2>1/2$.

\paragraph{Case 2: $0<-\tfrac{\xi}{k}\leq\mu(0)$.} In this case there are points $x_0\in \R$, such that $\Theta_{k,\xi}'(x_0) = 0$, namely $\mu(x_0) = -\frac{\xi}{k}$. For the polynomial distribution $\mu$ from~\eqref{eq:mu_pol} there are two such points $\pm x_0$ with $x_0\geq0$, which coincide only in the boundary case $-\tfrac{\xi}{k}=\mu(0)$, where $x_0=0$ and $\mu'(x_0)=0$. This degenerate case is a technical detail which we omit for the sake of readability, since it does not change the error behavior. There, one has to increase the order in van der Corput's lemma by one. In the following we thus assume $x_0>0$ and set $\delta = \min\{\tfrac12,\tfrac{x_0}{4}\}$, such that $[x_0-2\delta,x_0+2\delta]\subset[\tfrac{x_0}{2},\tfrac{3x_0}{2}]$ stays bounded away from the origin.\par

To isolate the singularities without introducing non-vanishing boundary terms from a harsh domain truncation, we employ a smooth partition of unity. We introduce a smooth bump function $\chi_{\text{sin}} \in C_c^\infty(\mathbb{R})$ centered at the singularity $x_0$ (for two singularity points, we just add the two bump functions) with a support size of $\delta > 0$, satisfying
\begin{equation*}
\chi_{\text{sin}}(x) = 
\begin{cases} 
1, & \text{for } |x - x_0| \le \delta, \\ 
0, & \text{for } |x - x_0| \ge 2\delta.
\end{cases}
\end{equation*}
We define the corresponding regular counterpart as $\chi_{\text{reg}}(x) \coloneqq 1 - \chi_{\text{sin}}(x)$. In the following we will describe the procedure for one singular point $x_0>0$, for the other one, the procedure is similar, since $\mu$ and $v$ are symmetric. An illustration of the functions can be found in Figure~\ref{fig:skizze}.
By linearity, the Fourier coefficient~\eqref{eq:c_k_g} splits into a singular and a regular integral,
\begin{equation*}
c_k(g) = \int_{-\infty}^\infty f(x) \chi_{\text{sin}}(x) \e^{-2\pi\im \Theta_{k,\xi}(x)} \,\d x + \int_{-\infty}^\infty f(x) \chi_{\text{reg}}(x) \e^{-2\pi\im \Theta_{k,\xi}(x)} \,\d x \eqqcolon I_{\text{sin}} + I_{\text{reg}}.
\end{equation*}
\begin{figure}[tb]
\centering
\begin{tikzpicture}
\begin{axis}[
width=0.8\linewidth,height = 0.3\textwidth, scale only axis =true,
axis lines = middle,
xlabel = {$x$},
xmin = -3, xmax = 4,
ymin = -0.1, ymax = 1.2,
domain = -3:4,
    samples = 300, 
    legend style={at={(0.95,0.95)}, anchor=north east, font=\footnotesize},
    xtick = {1.2}, 
    xticklabels = {$x_0$},
    ytick = {0.106},
    yticklabels = {$-\xi/k$},
]

\def\deltaVal{0.3}
\def\xO{1.2}

\pgfmathdeclarefunction{smoothstep}{1}{%
  \pgfmathparse{
    #1 <= 0 ? 0 : (
      #1 >= 1 ? 1 : (
        exp(-1/(#1)) / (exp(-1/(#1)) + exp(-1/(1-(#1))))
      )
    )
  }%
}

\addplot [thick, gray!80!black] {2/pi*(1+x^2)^(-2)};
\addlegendentry{$\mu(x)$}

\addplot [thick, red, domain=-3:4] {
    smoothstep(1 - (abs(x-\xO) - \deltaVal)/\deltaVal)
};
\addlegendentry{$\chi_{\text{sin}}$}

\addplot [thick, blue!50!black, domain=-3:4] {
    1 - smoothstep(1 - (abs(x-\xO) - \deltaVal)/\deltaVal)
};
\addlegendentry{$\chi_{\text{reg}}$}

\addplot [gray, thin, dashed] {0.106};
\addplot [only marks, gray!40, mark=*] coordinates {(-1.2, 0.106)};
\addplot [only marks, black, mark=*] coordinates {(\xO, 0.106)};

\draw [|-|] (axis cs: \xO-\deltaVal, 0.5) -- (axis cs: \xO+\deltaVal, 0.5) 
    node[midway, below] { $2\delta$};
\draw [|-|] (axis cs: \xO-2*\deltaVal, 1.12) -- (axis cs: \xO+2*\deltaVal, 1.12) 
    node[midway, below] {$4\delta$};

\draw [dashed, gray!50] (axis cs:\xO-\deltaVal, 0) -- (axis cs:\xO-\deltaVal, 1.1);
\draw [dashed, gray!50] (axis cs:\xO+\deltaVal, 0) -- (axis cs:\xO+\deltaVal, 1.1);
\draw [dashed, gray!50] (axis cs:\xO-2*\deltaVal, 0) -- (axis cs:\xO-2*\deltaVal, 1.1);
\draw [dashed, gray!50] (axis cs:\xO+2*\deltaVal, 0) -- (axis cs:\xO+2*\deltaVal, 1.1);

\end{axis}
\end{tikzpicture}
\caption{Illustration of the functions $\mu$, $\chi_{\text{reg}}$ and $\chi_{\text{sin}}$ in case 2 in the proof of Theorem~\ref{thm:fourier_coeffs}.}
\label{fig:skizze}
\end{figure}
\paragraph{The singular part.} On the support of $\chi_{\text{sin}}$ the phase is stationary at $x_0$, with $\Theta''_{k,\xi}(x_0) = k \mu'(x_0) \neq 0$. Let
$$m_0 \coloneqq \min_{|x-x_0|\leq 2\delta}|\mu'(x)|.$$
Because $[x_0-2\delta,x_0+2\delta]\subset[\tfrac{x_0}{2},\tfrac{3x_0}{2}]$ avoids the origin, $|\mu'|$ is comparable to $|\mu'(x_0)|$ on this interval, i.e., $m_0\geq c_\alpha|\mu'(x_0)|$ with a constant $c_\alpha>0$ depending only on $\alpha$. Writing $-2\pi\Theta_{k,\xi}(x) = \lambda\,\phi(x)$ with $\lambda = -2\pi k\,m_0$ and $\phi(x)=\frac{\xi x + k\Psi(x)}{k\,m_0}$, we have $|\phi''(x)|=\frac{|\mu'(x)|}{m_0}\geq1$ on the support, so that Lemma~\ref{lem:van_der_corput} applies with order two, parameter $|\lambda|=2\pi|k|m_0$ and amplitude $\psi=f\chi_{\text{sin}}$. Using $\int_\R|\chi_{\text{sin}}'|\d x\lesssim1$ and 
\[\sup_{|x-x_0|\leq2\delta}|f(x)|+\int_{x_0-2\delta}^{x_0+2\delta}|f'(x)|\d x\lesssim \frac{\norm{f}_{\mathcal F_{v,1}}}{v(x_0)},\]
together with $v(x)\geq c_\alpha v(x_0)$ on the support, the lemma yields
\begin{small}
\begin{align*}
|I_{\text{sin}}| &\leq \frac{C_{\text{V}}}{\sqrt{2\pi|k|\,m_0}} \left(\sup_{|x-x_0|\leq 2\delta} |f(x)| + \int_{x_0-2\delta}^{x_0+2\delta}\big(|f'(x)| +|f(x)\chi'_{\text{sin}}(x)|\big) \d x\right)
 \lesssim_\alpha \frac{\norm{f}_{\mathcal F_{v,1}} }{v(x_0)\sqrt{|k \mu'(x_0)|}},
\end{align*}%
\end{small}%
where $C_V$ is the constant from van der Corput's lemma.
\paragraph{The regular part.} By construction $\chi_{\text{reg}}$ is supported in $D_\delta = \{x\colon |x\mp x_0|\geq\delta\}$, where $\Theta'_{k,\xi}$ is bounded away from zero. We claim that
\begin{equation}\label{eq:reg_lower}
|\xi+k\mu(x)| = |k|\,|\mu(x)-\mu(x_0)| \geq \frac{c_\alpha}{\Lambda}\,|k|\,\mu(x), \qquad x\in D_\delta,
\end{equation}
with $\Lambda = \frac{1+x_0^2}{x_0\,\delta}$. Indeed, in the transition zones $\delta\leq|x\mp x_0|\leq2\delta$ the mean value theorem gives $|\mu(x)-\mu(x_0)|\geq \delta\,m_0\geq c_\alpha\,\delta|\mu'(x_0)|$, and since $|\mu'(x_0)|=\alpha\,\mu(x_0)\,\tfrac{x_0}{1+x_0^2}$ we have $\delta|\mu'(x_0)|=\frac{\alpha\,\mu(x_0)}{\Lambda}$. Together with $\mu(x)\sim\mu(x_0)$ there, this proves~\eqref{eq:reg_lower}. Outside these zones $|\mu(x)-\mu(x_0)|\geq c\max(\mu(x),\mu(x_0))\geq \frac{c}{\mu(x)}\geq\frac{ c\,\mu(x)}{\Lambda}$, since $\Lambda\geq1$. From~\eqref{eq:reg_lower} and~\eqref{eq:ratio_mu} we obtain, for $x\in D_\delta$,
$$\left|\frac{k\mu^{(\ell)}(x)}{\xi +k\mu(x)}\right|\leq \frac{\Lambda}{c_\alpha}\,\frac{|\mu^{(\ell)}(x)|}{\mu(x)}\lesssim_\alpha \Lambda\,(1+x^2)^{-\ell/2}.$$
Repeating the $m$-fold integration by parts of case~1 on $D_\delta$ (where no boundary terms occur, since $\chi_{\text{reg}}$ and its derivatives vanish at $x=\pm(x_0\pm\delta)$) each step contributes at most a factor $\Lambda$, so that
$$|I_{\text{reg}}| \lesssim_{\alpha,m} \Lambda^m \,\norm{f}_{\mathcal F_{v,m}} \int_{D_\delta}  \frac{1}{v(x)(1+x^2)^{m/2}\,|\xi+ k\mu(x)|^m} \d x.$$
Combining the bounds for $I_{\text{sin}}$ and $I_{\text{reg}}$ yields the assertion.
\end{proof}
\begin{remark}\label{rem:resonant}
The prefactor $\Lambda = \tfrac{1+x_0^2}{x_0\delta}$ and the singular term $1/\sqrt{|\mu'(x_0)|}$ both degenerate as $x_0\rightarrow0^+$, i.e., as $-\tfrac{\xi}{k}\rightarrow\mu(0)^-$, which is exactly the resonance of the two stationary points $\pm x_0$ at the origin. In the transitional regime $\mu(0)<-\tfrac{\xi}{k}<2\mu(0)$ there is no stationary point, but $\Theta_{k,\xi}'$ still becomes small near $x=0$. The same estimate as in case~1 applies with the lower bound $|\xi+k\mu(x)|\geq |k|\,(\,-\tfrac{\xi}{k}-\mu(0))$ replacing~\eqref{eq:case1_lower}, which again degenerates as $-\tfrac{\xi}{k}\rightarrow\mu(0)^+$. Consequently, the bounds of Theorem~\ref{thm:fourier_coeffs} are not uniform for $-\tfrac{\xi}{k}$ near $\mu(0)$. As shown in the proof of Theorem~\ref{thm:error_hatf}, this affects only finitely many aliasing indices $k$ for each fixed $\xi$ and $M$, and hence does not influence the asymptotic error rate in~$M$.
\end{remark}
The previous theorem is easily generalized to the case with arbitrary variance $\sigma$.

\begin{corollary}\label{cor:sigma_arbitrary1}
Let the assumptions from Theorem~\ref{thm:fourier_coeffs} be true and choose the variance $\sigma >0$ arbitrary, while $\mu_1$ is the density with variance $\sigma =1$. Using $\norm{f}_{\mathcal F_{v,1}}\leq\norm{f}_{\mathcal F_{v,m}}$, the decay of the Fourier coefficients $c_k(g)$ is bounded by
    \begin{equation*}
        |c_k(g)| \lesssim_{\alpha,m}  \begin{cases}
        \varrho(\sigma)\,\norm{f}_{\mathcal F_{v,m}} \, C_{\xi,k}(\mu_1) & \text{if } \tfrac{\xi}{k}\geq 0 \text{ or } -\tfrac{\xi}{k}\geq 2\mu_1(0)/\sigma,\\[1ex]
        \varrho(\sigma)\,\norm{f}_{\mathcal F_{v,m}}\,  C_{\xi,k}(\mu_1) + \frac{\varrho(\sigma)\,\norm{f}_{\mathcal F_{v,m}}}{v(x_0)\sqrt{|k \mu_1'(x_0)|}} &\text{if } 0<-\tfrac{\sigma\xi}{k}\leq \mu_1(0),
        \end{cases}
    \end{equation*}
    where 
    \begin{equation}\label{eq:rho_def}
        \varrho(\sigma) = \begin{cases}\sigma^{m+1} &\text{ if } \sigma\geq1\\
    \sigma^{1-a} &\text{ if }  \sigma<1.
    \end{cases}
    \end{equation}
 The point $x_0>0$ is determined by $\mu_1(x_0)=-\tfrac{\sigma\xi}{k}$, and
    \[C_{\xi,k}(\mu_1) =\Lambda^m\int_{D_\delta}  \frac{1}{(1+x^2)^{a/2+m/2}\,|\sigma  \xi+ k\mu_1(x)|^m} \d x\]
    is the modified constant from Theorem~\ref{thm:fourier_coeffs} (with $\Lambda=1$ and $D_\delta=\R$ in the non-resonant first case).
\end{corollary}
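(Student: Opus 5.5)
The plan is to reduce to the case $\sigma=1$ by rescaling the space variable and then invoke Theorem~\ref{thm:fourier_coeffs} for a rescaled function. Since $\mu_\sigma(x)=\sigma^{-1}\mu_1(x/\sigma)$, the cumulative distribution functions satisfy $\Psi_\sigma(x)=\Psi_1(x/\sigma)$. Starting from the representation~\eqref{eq:c_k_g},
\begin{equation*}
c_k(g)=\int_\R f(x)\e^{-2\pi\im(\xi x+k\Psi_\sigma(x))}\d x .
\end{equation*}
We substitute $x=\sigma t$, which gives
\begin{equation*}
c_k(g)=\sigma\int_\R f_\sigma(t)\e^{-2\pi\im(\sigma\xi t+k\Psi_1(t))}\d t,\qquad f_\sigma(t)\coloneqq f(\sigma t).
\end{equation*}
This is exactly the Fourier coefficient treated in Theorem~\ref{thm:fourier_coeffs}, with $\mu=\mu_1$, frequency $\sigma\xi$ in place of $\xi$, and function $f_\sigma$. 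The phase derivative becomes $\sigma\xi+k\mu_1(t)$. Hence the case distinction transforms to $\tfrac{\xi}{k}\ge0$ or $-\tfrac{\sigma\xi}{k}\ge2\mu_1(0)$, and $0<-\tfrac{\sigma\xi}{k}\le\mu_1(0)$. The stationary point $x_0$ is defined by $\mu_1(x_0)=-\sigma\xi/k$, and $\Lambda$ and $D_\delta$ are computed from it. Applying the theorem, which does not depend on $\xi$ lying in any specific range beyond the case conditions, produces the integral $C_{\xi,k}(\mu_1)$ and the singular term. Both carry a factor $\sigma\norm{f_\sigma}_{\mathcal F_{v,m}}$, respectively $\sigma\norm{f_\sigma}_{\mathcal F_{v,1}}\le\sigma\norm{f_\sigma}_{\mathcal F_{v,m}}$.

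The remaining step, and the only real work, is to bound $\sigma\norm{f_\sigma}_{\mathcal F_{v,m}}\lesssim\varrho(\sigma)\norm{f}_{\mathcal F_{v,m}}$. By the chain rule $f_\sigma^{(\ell)}(t)=\sigma^\ell f^{(\ell)}(\sigma t)$, so with $x=\sigma t$ each summand is
\begin{equation*}
\sigma^\ell|f^{(\ell)}(x)|\,(1+x^2/\sigma^2)^{\ell/2}(1+x^2/\sigma^2)^{a/2}.
\end{equation*}
We compare this with $|f^{(\ell)}(x)|(1+x^2)^{\ell/2}(1+x^2)^{a/2}$.

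For $\sigma\ge1$ we have $(1+x^2/\sigma^2)\le(1+x^2)$. The weights are therefore dominated, and only the factor $\sigma^\ell\le\sigma^m$ remains. Together with the Jacobian $\sigma$ this gives $\sigma^{m+1}$.

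For $\sigma<1$ we use $1+x^2/\sigma^2\le\sigma^{-2}(1+x^2)$. This yields the factor $\sigma^\ell\cdot\sigma^{-\ell}\sigma^{-a}=\sigma^{-a}$, and with the Jacobian we get $\sigma^{1-a}$. Taking the supremum and summing over $\ell\le m$ gives $\varrho(\sigma)$ as in~\eqref{eq:rho_def}, with constants depending only on $m$.

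I expect no genuine obstacle, only bookkeeping. One point needs care: the weight $v$ in $C_{\xi,k}$ must remain the unscaled $(1+t^2)^{a/2}$ in the variable $t$. This holds because the theorem is applied to $f_\sigma$ measured in the norm with weight $v$, so the scaling is absorbed entirely into $\varrho(\sigma)$. The singular term likewise keeps $v(x_0)$ and $\mu_1'(x_0)$ evaluated at the rescaled stationary point. Finally, the hypotheses of Theorem~\ref{thm:fourier_coeffs} for $f_\sigma$ follow from the norm estimate: $f_\sigma\in\mathcal F_{v,m}$ is immediate. The theorem's assumption on $\hat f$ is not used in the coefficient bound, and in any case $\widehat{f_\sigma}(\eta)=\sigma^{-1}\hat f(\eta/\sigma)$ stays in $\mathcal F_w$.
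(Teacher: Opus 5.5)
Your proposal is correct and follows the paper's proof. Both substitute $x=\sigma t$ to reduce to Theorem~\ref{thm:fourier_coeffs} for $\sigma f(\sigma\,\cdot)$ with density $\mu_1$ at frequency $\sigma\xi$, and both bound the rescaled norm by $\varrho(\sigma)\norm{f}_{\mathcal F_{v,m}}$ via $1+x^2/\sigma^2\le 1+x^2$ for $\sigma\ge 1$ and $1+x^2/\sigma^2\le \sigma^{-2}(1+x^2)$ for $\sigma<1$, exactly as in~\eqref{eq:norm_f_sigma}; your additional check that $\widehat{f(\sigma\,\cdot)}$ remains in $\F_w$ is a harmless detail the paper omits.
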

\begin{proof}
Adding the variance parameter to the density $\mu$ means,
\begin{align*}
    \mu_\sigma(x) = \frac{1}{\sigma}\mu_1\left(\frac{x}{\sigma}\right), \quad \Psi_{\sigma}(x) = \Psi_1\left(\frac{x}{\sigma}\right),
\end{align*}
such that the Fourier coefficients behave as
    \begin{equation*}
        c_k(g) = \int_{-\infty}^\infty f(x)\e^{-2\pi \im(\xi x+k\Psi_\sigma(x))} \d x
        =\int_{-\infty}^\infty \sigma f(\sigma y)\e^{-2\pi \im((\sigma \xi) y+k\Psi_1( y))} \d y.
    \end{equation*}
    This means Theorem~\ref{thm:fourier_coeffs} is applied to the function $\sigma f(\sigma y)$ with the density $\mu_1$, at the frequencies $\tilde \xi = \sigma \xi$. For the rescaled function norm we have 
    \begin{align}\label{eq:norm_f_sigma}
    \norm{\sigma f(\sigma x)}_{\F_{v,m}}&=  \sigma \sup_{x\in \R}  \left(\sum_{\ell = 0}^m|\sigma^\ell f^{(\ell)}(\sigma x)|(1+x^2)^{\ell/2}\right) \, v(x)\notag\\
    &=\sigma \sup_{y\in \R}  \left(\sum_{\ell = 0}^m|\sigma^\ell f^{(\ell)}(y)|\left(1+\frac{y^2}{\sigma^2}\right)^{\ell/2+a/2}\right)\notag\\
    &\leq \begin{cases}
        \sigma^{m+1} \norm{f}_{\F_{v,m}} & \text{ if }\sigma >1,\\
        \sigma^{1-a} \norm{f}_{\F_{v,m}} & \text{ if }\sigma <1.
    \end{cases}
    \end{align}
    such that the assertion follows.
\end{proof}

We apply this decay of the Fourier coefficients to Theorem~\ref{thm:aliasing_midpoint} and get the following result for the error between the Fourier transform and our approximation.
\begin{theorem}
\label{thm:error_hatf}
Let $v(x) = (1+x^2)^{a/2}$ and $w(\xi)=(1+\xi^2)^{b/2}$ be weight functions with spatial decay $a>1$ and spectral regularity $b>1$. Let $f$ be a complex-valued function such that $f\in \mathcal F_{v,m}$ and $\hat f\in\F_w$ with $2\leq m<b$.
Let $\mu$ be a polynomial probability density function defined in~\eqref{eq:mu_pol} with $\sigma=1$ and $1<\alpha<\tfrac{a-1}{m}+1$, so that condition~\eqref{eq:cond_alpha} holds.
For any fixed frequency $\xi\in [-\Omega,\Omega]$ define the region $\mathcal{R}_\xi \coloneqq \{x \in \mathbb{R} \colon \mu(x) \le |\xi|/M \}$. Then the error of the approximation~\eqref{eq:ftilde} is bounded by
\begin{equation*}
|\hat{f}(\xi) - \tilde{f}(\xi)| \lesssim \frac{\norm{f}_{\mathcal F_{v,m}}}{M^m} \int_{-\infty}^\infty \frac{1}{v(x)(1+x^2)^{m/2}\mu(x)^m} \d x + \frac{\norm{f}_{\mathcal F_{v,m}} \sqrt{|\xi|}}{M} \int_{\mathcal{R}_\xi} \frac{\sqrt{|\mu'(x)|}}{v(x)\mu(x)^{3/2}} \d x,
\end{equation*}
up to a constant that additionally depends on $\alpha$, $m$ and $a$.
\end{theorem}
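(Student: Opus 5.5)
Because $\mu$ has full support and $f\in\F_v$ with $\alpha<a$ (which follows from $\alpha<\tfrac{a-1}{m}+1\leq a$), the mapped integrand $g$ in~\eqref{eq:g} is continuous and periodic, and $c_0(g)=\hat f(\xi)$. I start from Theorem~\ref{thm:aliasing_midpoint}, which gives
\[
|\hat f(\xi)-\tilde f(\xi)|\leq\sum_{k\neq0}|c_{kM}(g)|.
\]
Absolute convergence of the Fourier series of $g$ would itself follow from the decay established below. I then apply Theorem~\ref{thm:fourier_coeffs} with $k$ replaced by $kM$ and sort the indices by the sign of $\xi/k$ and the size of $|\xi|/(|k|M)$.

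\textbf{Non-resonant indices.} If $\xi/k\geq0$, or if $|\xi|/(|k|M)\geq2\mu(0)$ with opposite signs, I use the first bound of Theorem~\ref{thm:fourier_coeffs} together with $|\xi+kM\mu(x)|\geq\tfrac14|k|M\mu(x)$ from~\eqref{eq:case1_lower}. This gives
\[
|c_{kM}(g)|\lesssim\frac{\norm{f}_{\F_{v,m}}}{|k|^mM^m}\int_{\R}\frac{\d x}{v(x)(1+x^2)^{m/2}\mu(x)^m}.
\]
Since $m\geq2$, the series $\sum_k|k|^{-m}$ converges, and this produces the first term of the claimed bound.

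\textbf{Resonant indices.} For $0<|\xi|/(|k|M)\leq\mu(0)$ the regular part in Theorem~\ref{thm:fourier_coeffs} is handled like the non-resonant case, except for the extra factor $\Lambda^m$; this is where the difficulty lies. The singular part has size $\norm{f}_{\F_{v,1}}/\bigl(v(x_k)\sqrt{|k|M|\mu'(x_k)|}\bigr)$, where $\mu(x_k)=|\xi|/(|k|M)$. I sum these contributions over $|k|\geq k_0\approx|\xi|/(M\mu(0))$ by comparing the sum with an integral: I parametrize by $t=|k|$ and substitute $x=x_t$, so that $\mu(x)=|\xi|/(tM)$. Then $\d t=\frac{|\xi|}{M}\,\frac{|\mu'(x)|}{\mu(x)^2}\,\d x$ and $\sqrt{tM}=\sqrt{|\xi|/\mu(x)}$. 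Hence
\[
\sum_k\frac{1}{v(x_k)\sqrt{|k|M|\mu'(x_k)|}}\approx\frac{\sqrt{|\xi|}}{M}\int\frac{\sqrt{|\mu'(x)|}}{v(x)\mu(x)^{3/2}}\,\d x .
\]
The range $t\geq1$ corresponds to $\mu(x)\leq|\xi|/M$, which is exactly $\mathcal R_\xi$, and the two stationary points $\pm x_k$ account for both tails. Justifying the sum--integral comparison needs monotonicity of the summand in $t$, which holds for large $x$ since $v\sqrt{|\mu'|}\sim|x|^{a-(\alpha+1)/2}$ increases; the finitely many boundary terms are absorbed into the constant.

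\textbf{Main obstacle.} The hardest step is controlling $\Lambda^m$ and the transitional regime of Remark~\ref{rem:resonant}. When $x_k$ is not small, $\Lambda\lesssim1+x_k^2$ grows, and I would need to show that on $D_\delta$ the factor $\Lambda^m/|\xi+kM\mu|^m$ is still summable over $k$ to a quantity bounded by the first term. My first attempt is to use the sharper estimate $|\mu(x)-\mu(x_k)|\gtrsim\max(\mu(x),\mu(x_k))$ away from the transition zones, and to bound the zones $\delta\leq|x\mp x_k|\leq2\delta$ directly by $\sup|f|\cdot\delta$, folding that contribution into the singular term. Only finitely many indices, those with $|\xi|/(|k|M)$ near $\mu(0)$, fall in the degenerate regime; there I would use the crude bound $|c_{kM}(g)|\leq\norm{f}_{L_1}$, accepting a constant depending on $\alpha,m,a$.
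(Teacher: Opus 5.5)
\textbf{Genuine gap: the regular part of the resonant indices.} Your outline matches the paper's proof. It uses the aliasing identity and Theorem~\ref{thm:fourier_coeffs} at index $kM$. The regular contributions are summed into the $M^{-m}$ term using $m\geq2$; the paper interchanges sum and integral and bounds $\sum'|t+k|^{-m}$, and your direct summation of $|k|^{-m}$ is an equivalent variant. The singular contributions become the integral over $\mathcal R_\xi$ via $k\leftrightarrow x_k$. The gap is the step you flag yourself, and the repair you sketch does not close it.

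With $\delta=\min\{\tfrac12,\tfrac{x_k}{4}\}$ and $x_k$ large, the bound $|\mu(x)-\mu(x_k)|\gtrsim\max(\mu(x),\mu(x_k))$ outside the transition zones is false. Already at $x=x_k+1$ one only has $|\mu(x)-\mu(x_k)|\approx|\mu'(x_k)|\approx\alpha\mu(x_k)/x_k$. So on the whole range $1\leq|x\mp x_k|\lesssim x_k$ the lower bound for $|\xi+kM\mu(x)|$ loses a factor of order $x_k$. This is exactly $\Lambda(k)\approx 2x_k$. Since $x_k\sim(|k|M/|\xi|)^{1/\alpha}$, these growing powers are in general not compensated by $|k|^{-m}$.

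Two further points fail. Bounding the transition zones trivially by $\sup|f|\cdot\delta$ is dominated by the singular term only when $|kM\mu'(x_k)|\lesssim1$. The near-threshold indices cannot be absorbed through $|c_{kM}(g)|\leq\norm{f}_{L_1}$, because that bound does not decay in $M$.

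The paper passes over the same point by asserting $\Lambda(k)\leq C_\alpha$ because $x_0(k)$ stays away from the origin. With the literal choice $\delta=\min\{\tfrac12,\tfrac{x_0}{4}\}$ of Theorem~\ref{thm:fourier_coeffs} this also fails, since $\Lambda(k)\approx2x_0(k)\to\infty$ as $|k|\to\infty$.

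\textbf{The missing idea: scale the cutoff with the stationary point.} For $M\geq\Omega/\mu(1)$, every resonant index satisfies $\mu(x_k)=\tfrac{|\xi|}{|k|M}\leq\mu(1)$, i.e.\ $x_k\geq1$. So neither near-threshold nor transitional indices occur. Now take $\delta=\tfrac{x_k}{4}$ without the cap $\tfrac12$.

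For the singular part: on $[\tfrac{x_k}{2},\tfrac{3x_k}{2}]$ one still has $|\mu'|\sim_\alpha|\mu'(x_k)|$, $v\sim_a v(x_k)$, $\int|f'|\lesssim\norm{f}_{\mathcal F_{v,1}}/v(x_k)$ and $\int|\chi_{\text{sin}}'|\lesssim1$. Van der Corput therefore yields exactly your singular term $\norm{f}_{\mathcal F_{v,1}}/\big(v(x_k)\sqrt{|kM\mu'(x_k)|}\big)$.

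For the regular part: $\Lambda=4(1+x_k^2)/x_k^2\leq8$, i.e.\ $|\xi+kM\mu(x)|\gtrsim_\alpha|k|M\mu(x)$ on all of $D_\delta$. Moreover $|\chi_{\text{reg}}^{(j)}|\lesssim x_k^{-j}\lesssim(1+x^2)^{-j/2}$ on the transition zones. Hence the $m$-fold integration by parts runs verbatim as in the non-resonant case. Then $|I_{\text{reg}}(k)|$ satisfies your non-resonant bound with the factor $(|k|M)^{-m}$, and summing over $k$ gives the first term. Together with your singular-sum computation, this yields the stated bound.
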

\begin{proof} 
Following Theorem~\ref{thm:aliasing_midpoint}, the error between $\hat f$ and $\tilde f$ is the sum $\sum_{k \in \mathbb{Z} \setminus \{0\}} (-1)^{k} c_{k M}(g)$. We apply Theorem~\ref{thm:fourier_coeffs} at frequency $\xi$ with aliasing index $kM$ to each coefficient, decomposing it into $c_{kM}(g)=I_{\text{reg}}(k) +I_{\text{sin}}(k)$, where $I_{\text{sin}}(k)=0$ whenever $kM$ falls into the non-resonant first case. A resonant index $k$ (opposite sign of $\xi$, $0<\tfrac{|\xi|}{|k|M}\leq\mu(0)$) gives rise to a stationary point $x_0(k)$ satisfying $\mu(x_0(k)) = \frac{|\xi|}{|k|M}$, and the associated $I_{\text{reg}}(k)$ is integrated over $D_\delta(k)=\{x\colon|x\mp x_0(k)|\geq\delta(k)\}$ with prefactor $\Lambda(k)^m$. We bound the two parts separately,
$$\Big|\sum_{k \in \mathbb{Z} \setminus \{0\}} (-1)^{k} c_{k M}(g)\Big|\leq \underbrace{\sum_{k \in \mathbb{Z} \setminus \{0\}}|I_{\text{reg}}(k)|}_{E_{\text{reg}}} + \underbrace{\sum_{k \in \mathbb{Z} \setminus \{0\}}|I_{\text{sin}}(k)|}_{E_{\text{sin}}} .$$
We have $|\xi|\leq\Omega$ and for large $M$ every resonant index satisfies $\tfrac{|\xi|}{|k|M}\leq\mu(0)$ only for $|k|\geq\tfrac{|\xi|}{M\mu(0)}$. Hence $x_0(k)$ stays bounded away from the origin and $\Lambda(k)\leq C_\alpha$ is uniformly bounded. The finitely many near-threshold indices with small $x_0(k)$ (cf.\ Remark~\ref{rem:resonant}) are absorbed into the constant. We may therefore treat all $I_{\text{reg}}(k)$ with a uniform prefactor.
\paragraph{Step 1: The regular contribution.}
Summing the regular bounds of Theorem~\ref{thm:fourier_coeffs} over all aliasing indices $k \in \mathbb{Z} \setminus \{0\}$ yields
\begin{equation*}
    E_{\text{reg}} \lesssim \sum_{k \neq 0} \int_{-\infty}^\infty \frac{\norm{f}_{\mathcal F_{v,m}}}{v(x)(1+|x|^2)^{m/2}\,|\xi+ k M \mu(x)|^m} \,\d x.
\end{equation*}
By Tonelli's theorem, we interchange the sum and the integral, since all terms are non-negative. For fixed $x$, the value $t \coloneqq \tfrac{\xi}{M\mu(x)}$ is real, and at most the two integers $k$ nearest to $-t$ satisfy $|t+k|<\tfrac12$. These resonant terms correspond precisely to $x$ lying near a stationary point $x_0(k)$ and are excluded from $D_\delta(k)$, so they do not appear in $E_{\text{reg}}$. For the remaining non-resonant indices $|t+k|\geq\tfrac12$ we have 
\begin{equation*}
    \sideset{}{'}\sum_{k \neq 0} \frac{1}{|\xi + k M \mu(x)|^m} = \frac{1}{M^m \mu^m(x)} \sideset{}{'}\sum_{k \neq 0} \frac{1}{\left| \frac{\xi}{M \mu(x)} + k \right|^m} \lesssim \frac{1}{M^m \mu^m(x)},
\end{equation*}
where the primed sum omits the resonant indices and the last bound uses $m\geq2$. Substituting this closed-form bound back into the integral directly yields the first part of the theorem,
\begin{equation*}
    E_{\text{reg}} \lesssim \frac{ \norm{f}_{\mathcal F_{v,m}}}{M^m} \int_{-\infty}^\infty \frac{1}{v(x)(1+|x|^2)^{m/2}\mu(x)^m} \,\d x = \mathcal{O}(M^{-m}).
\end{equation*}

\paragraph{Step 2: The singular contribution.}
For the resonant indices $k$, a stationary point $x_0(k)$ with $\mu(x_0(k)) = \left|\frac{\xi}{kM}\right|$ exists, and by the discussion above we may assume that $x_0(k)$ is bounded away from the origin, so that $\mu'(x_0(k))\neq0$. Summing the van der Corput bounds of Theorem~\ref{thm:fourier_coeffs} and using the monotonicity of $k\mapsto x_0(k)$ to compare the sum with an integral yields
\begin{align*}
    E_{\text{sin}} &\lesssim \sum_{k =1}^\infty \frac{\norm{f}_{\mathcal F_{v,m}} }{v(x_0(k))\sqrt{|k M \mu'(x_0(k))|}}
    \leq \frac{\norm{f}_{\mathcal F_{v,m}}}{\sqrt M} \int_{1}^\infty \frac{1}{v(x_0(k))\sqrt{k  |\mu'(x_0(k))|}} \,\d k.
\end{align*}
To evaluate this integral, we substitute the integration variable $k$ with the spatial variable $x$ corresponding to the stationary point. From the condition $k(x) = \frac{|\xi|}{M \mu(x)}$, the differential transforms as
\begin{equation*}
    \d k =- \frac{|\xi| \mu'(x)}{M \mu(x)^2} \,\d x.
\end{equation*}
As $k$ ranges over $[1,\infty)$, the point $x=x_0(k)$ ranges precisely over the spatial region $\mathcal{R}_\xi \coloneqq \{x \in \mathbb{R} \colon \mu(x) \le |\xi|/M \}$. Substituting $k(x)$ and $\d k$ into the integral gives
\begin{align*}
    E_{\text{sin}} &\lesssim \frac{\norm{f}_{\mathcal F_{v,m}}}{\sqrt M} \int_{\mathcal{R}_\xi} \frac{1}{v(x) \sqrt{\frac{|\xi|}{M \mu(x)}  |\mu'(x)|}} \frac{|\xi| |\mu'(x)|}{M \mu(x)^2} \,\d x \notag 
    = \frac{\sqrt{|\xi|} \norm{f}_{\mathcal F_{v,m}}}{M} \int_{\mathcal{R}_\xi} \frac{\sqrt{|\mu'(x)|}}{v(x)\mu(x)^{3/2}} \d x.
\end{align*}
The total final error is bounded by the sum of the two terms, 
\begin{equation*}
    |\hat{f}(\xi) - \tilde{f}(\xi)| \leq E_{\text{reg}} + E_{\text{sin}}.
\end{equation*}
This concludes the proof.
\end{proof}
If we allow the additional variance parameter $\sigma$, similarly to Corollary~\ref{cor:sigma_arbitrary1} we receive the following.
\begin{corollary}
\label{cor:error_hatf_sigma}
Let the assumptions from Theorem~\ref{thm:error_hatf} hold but choose $\sigma>0$ arbitrarily and let $\mu_1$ denote the standard polynomial probability density function with $\sigma = 1$. For any fixed frequency $\xi \in [-\Omega, \Omega]$, the error of the approximation~\eqref{eq:ftilde} satisfies the following upper bounds, using $\varrho(\sigma)$ defined in~\eqref{eq:rho_def}, 
if $\sigma > 1$,
\begin{small}
    \begin{equation*}
        |\hat{f}(\xi) - \tilde{f}(\xi)| \lesssim  \frac{\varrho(\sigma)\norm{f}_{\mathcal F_{v,m}}}{M}\left(\frac{1}{M^{m-1}} \int_{-\infty}^\infty \frac{1}{(1+y^2)^{\frac{a+m}{2}}\mu_1(y)^m} \d y +   \sqrt{\sigma|\xi|} \int_{\mathcal{R}_{\sigma\xi}} \frac{\sqrt{|\mu_1'(y)|}}{(1+y^2)^{a/2}\mu_1(y)^{3/2}} \d y\right).
    \end{equation*}
    \end{small}
\end{corollary}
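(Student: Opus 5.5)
The plan is to reduce the statement to Theorem~\ref{thm:error_hatf} by the same rescaling used in the proof of Corollary~\ref{cor:sigma_arbitrary1}.

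\textbf{Rescaling.} With $\mu_\sigma(x)=\sigma^{-1}\mu_1(x/\sigma)$ and $\Psi_\sigma(x)=\Psi_1(x/\sigma)$, the substitution $x=\sigma y$ in~\eqref{eq:c_k_g} gives
\[
c_{kM}(g)=\int_{\R} f_\sigma(y)\,\e^{-2\pi\im((\sigma\xi)y+kM\Psi_1(y))}\d y,
\qquad f_\sigma(y)\coloneqq \sigma f(\sigma y).
\]
Hence every aliasing coefficient is exactly the coefficient treated in Theorem~\ref{thm:fourier_coeffs}, with density $\mu_1$, function $f_\sigma$ and frequency $\tilde\xi=\sigma\xi$. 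The same identity holds directly for the quadrature itself. The nodes are $x_j=\sigma\Psi_1^{-1}(y_j)$ and $f(x_j)/\mu_\sigma(x_j)=f_\sigma(\tilde y_j)/\mu_1(\tilde y_j)$ with $\tilde y_j=\Psi_1^{-1}(y_j)$. Therefore $\tilde f(\xi)$ for $(f,\mu_\sigma)$ coincides with the approximation for $(f_\sigma,\mu_1)$ at $\tilde\xi$. Likewise $\hat f(\xi)=\widehat{f_\sigma}(\sigma\xi)$.

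\textbf{Applying Theorem~\ref{thm:error_hatf}.} The pointwise error is then literally the error of Theorem~\ref{thm:error_hatf} applied to $f_\sigma$ at frequency $\sigma\xi$. The constraint $|\tilde\xi|\le\sigma\Omega$ is harmless, since that proof only uses a fixed frequency. This yields
\[
|\hat f(\xi)-\tilde f(\xi)|\lesssim \frac{\norm{f_\sigma}_{\F_{v,m}}}{M^m}\int_\R\frac{\d y}{(1+y^2)^{\frac{a+m}{2}}\mu_1(y)^m}+\frac{\norm{f_\sigma}_{\F_{v,m}}\sqrt{\sigma|\xi|}}{M}\int_{\mathcal R_{\sigma\xi}}\frac{\sqrt{|\mu_1'(y)|}}{(1+y^2)^{a/2}\mu_1(y)^{3/2}}\d y.
\]
Here $\mathcal R_{\sigma\xi}=\{y:\mu_1(y)\le\sigma|\xi|/M\}$ is precisely the region of Theorem~\ref{thm:error_hatf} for the frequency $\sigma\xi$.

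\textbf{Norm bound and factoring.} Next I bound the norm of $f_\sigma$ by~\eqref{eq:norm_f_sigma}: for $\sigma>1$, $\norm{f_\sigma}_{\F_{v,m}}\le\sigma^{m+1}\norm{f}_{\F_{v,m}}=\varrho(\sigma)\norm{f}_{\F_{v,m}}$. This step should be justified carefully, because the weight transforms as $(1+y^2/\sigma^2)^{(\ell+a)/2}\le(1+y^2)^{(\ell+a)/2}$ when $\sigma\ge1$, and $\sigma^\ell\le\sigma^m$. Finally I factor out $\varrho(\sigma)\norm{f}_{\F_{v,m}}/M$, which gives exactly the displayed form with $M^{-(m-1)}$ in the first term.

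\textbf{Main obstacle.} The step needing most care is uniformity of the implicit constants. Theorem~\ref{thm:error_hatf} absorbs the finitely many near-threshold resonant indices (Remark~\ref{rem:resonant}) into a constant. After rescaling, their number depends on $\sigma|\xi|/(M\mu_1(0))$. I would argue, as in that proof, that for $M\ge\sigma\Omega/\mu_1(0)$ these indices keep $x_0(k)$ bounded away from $0$. Then $\Lambda(k)$ is bounded by a constant depending only on $\alpha$, so the hidden constant depends on $\alpha,m,a$ and not on $\sigma$.
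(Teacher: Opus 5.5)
Your proposal is correct and follows the paper's proof. You rescale to $f_\sigma=\sigma f(\sigma\,\cdot)$ with density $\mu_1$ at frequency $\sigma\xi$, apply Theorem~\ref{thm:error_hatf}, insert $\norm{f_\sigma}_{\F_{v,m}}\le\sigma^{m+1}\norm{f}_{\F_{v,m}}$ from~\eqref{eq:norm_f_sigma}, and factor out $\varrho(\sigma)\norm{f}_{\F_{v,m}}/M$; your explicit check that the nodes, weights and $\hat f$ transform consistently is a useful addition.

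Your ``main obstacle'' paragraph is unnecessary, because Theorem~\ref{thm:error_hatf} as stated already gives a constant depending only on $\alpha,m,a$. As written it also overreaches, for two reasons (both inherited from the paper's proof of Theorem~\ref{thm:error_hatf}). First, $M\ge\sigma\Omega/\mu_1(0)$ still permits $x_0(\pm1)=0$, for example at $|\xi|=\Omega$ and $M=\sigma\Omega/\mu_1(0)$. Second, with $\delta=\min\{\tfrac12,\tfrac{x_0}{4}\}$ one has $\Lambda(k)\ge 2x_0(k)\to\infty$ as $|k|\to\infty$, so keeping $x_0(k)$ away from $0$ does not bound $\Lambda(k)$.
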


\begin{proof}
Using the same calculations as for Corollary~\ref{cor:sigma_arbitrary1}, we have to apply Theorem~\ref{thm:aliasing_midpoint} to the function $\sigma f(\sigma y)$ with the density $\mu_1$, at the frequencies $\tilde \xi = \sigma \xi$.

By substituting the norm estimates~\eqref{eq:norm_f_sigma} from Corollary~\ref{cor:sigma_arbitrary1} into the regular error term $E_{\text{reg}}$, combined with the factor $\sqrt{\vphantom{1}\smash{|\tilde{\xi}|}} = \sqrt{\sigma |\xi|}$ in the singular error term $E_{\text{sin}}$, the assertions follow immediately.
\end{proof}

\begin{remark}
The introduction of the variance parameter $\sigma$ does not change the fundamental algebraic convergence rates with respect to the number of samples $M$. Instead, $\sigma$ acts exclusively as a multiplicative prefactor that scales the error constants by
\begin{itemize}
    \item \textbf{Large variances ($\sigma > 1$):} The regular and the singular error term scale like $\sigma^{m+1}$ and $\sigma^{m+3/2}$, respectively, so that the singular term $\sigma^{m+3/2}$ dominates. This reflects the fact that stretching the function magnifies the magnitude of its higher-order derivatives.
    \item \textbf{Small variances ($\sigma < 1$):} The regular and the singular error term scale like $\sigma^{1-a}$ and $\sigma^{3/2-a}$, respectively. Since $\sigma<1$, the regular term $\sigma^{1-a}$ dominates. This is governed by the spatial decay $a$ of the underlying weight function as the density concentrates around the origin.
\end{itemize}
\end{remark}

\subsubsection{Optimizing the parameter $\alpha$ for minimizing the $L_p$-error rate}\label{sec:alpha}
The parameter $\alpha$ for the choice of the density function $\mu$ has to be adapted to the spatial decay $a$ and the spectral regularity $b$. We study this relation in the following for the case where the variance $\sigma=1$ is fixed.
Theorem~\ref{thm:error_hatf} states an error bound for $\hat f$ for all $\xi$ on the compact domain $[-\Omega,\Omega]$.
Note that by \eqref{eq:ftilde} and by \eqref{eq:Wabfall} we obtain for the $L_p$-error $E_p$ defined in~\eqref{eq:L_p-error} the estimate 
\begin{align*}\label{eq:L_2error}
E_p^p
&\lesssim \underbrace{\norm{\hat f}_{\F_{w}}^p \left(\int_{-\infty}^{-\Omega} \frac{1}{|w(\xi)|^p} \d \xi + \int_{\Omega}^\infty \frac{1}{|w(\xi)|^p} \d \xi \right) }_{E_{\text{tail}}}+  \underbrace{\int_{-\Omega}^\Omega|\hat f(\xi) - \tilde f(\xi)|^p \d \xi}_{E_{\text{mid}}} \notag .
\end{align*}
For the tail integrals we have in the actual case of polynomial decay,
$${E_{\text{tail}}} = \norm{\hat f}_{\F_{w}}^p  \int_{|\xi|\geq \Omega} (1+\xi^2)^{-pb/2} \d \xi\lesssim \norm{\hat f}_{\F_{w}}^p \frac{\Omega^{1-pb}}{pb-1}.$$
For the middle integral we use Theorem~\ref{thm:error_hatf},
\begin{small}
\begin{align*}
    E_{\text{mid}} \lesssim \int_{-\Omega}^\Omega \left(\frac{\norm{f}_{\mathcal F_{v,m}}}{M^m}  + \frac{\norm{f}_{\mathcal F_{v,m}}  }{M}  \sqrt{|\xi|}\int_{\mathcal{R}_\xi} \frac{\sqrt{|\mu'(x)|}}{v(x)\mu(x)^{3/2}} \d x\right)^p \d \xi.
\end{align*}
\end{small}
For the involved integrals in Theorem~\ref{thm:error_hatf} we have 
\begin{equation*}
\int_{-\infty}^\infty \frac{1}{v(x)(1+x^2)^{m/2}\mu(x)^m} \d x = \frac{1}{C_{\alpha,1}^m}\int_{-\infty}^\infty \frac{1}{(1+x^2)^{a/2+m/2-\alpha m/2}} \d x =\frac{\sqrt{\pi}}{C_{\alpha,1}^m} \frac{\Gamma(\tfrac a2+\tfrac m2-\tfrac{\alpha m}2-\tfrac 12)}{\Gamma(\tfrac a2+\tfrac m2-\tfrac{\alpha m}2)},
\end{equation*}
which leads to the condition~\eqref{eq:cond_alpha}. The other integral in Theorem~\ref{thm:error_hatf} is
\begin{equation*}
 \int_{\mathcal{R}_\xi} \frac{\sqrt{|\mu'(x)|}}{v(x)\mu(x)^{3/2}} \d x \sim \int_{\mathcal{R}_\xi} \frac{\sqrt{|x|(1+x^2)^{-\alpha/2-1}}}{(1+x^2)^{a/2-3\alpha/4}} \d x  = \int_{\mathcal{R}_\xi} \sqrt{|x|}\, (1+x^2)^{-a/2+\alpha/2-1/2} \d x
\end{equation*}
The domain of this integration is the spatial tail region $\mathcal{R}_\xi = \{x \in \mathbb{R}\colon \mu(x) \le |\xi|/M\}$. Using the asymptotic behavior of $\mu(x)$, this condition implies that the stationary points are restricted to the region
\begin{equation*}
    (1+x^2)^{-\alpha/2} \lesssim \frac{|\xi|}{M} \implies |x| \gtrsim \sqrt{\left( \frac{M}{|\xi|} \right)^{2/\alpha} - 1}.
\end{equation*}
Asymptotically, the integral is dominated by the tails $|x|\geq \left( \frac{M}{|\xi|} \right)^{1/\alpha}$. Assuming the integrability condition $a > \alpha + 1/2$ holds, the integral evaluates to
\begin{align*}
    \int_{\mathcal{R}_\xi} \frac{\sqrt{|\mu'(x)|}}{v(x)\mu(x)^{3/2}} \,\mathrm{d}x &\sim \int_{\left( \frac{M}{|\xi|} \right)^{1/\alpha}}^\infty x^{\alpha - a - 1/2} \,\mathrm{d}x \sim \left(\left( \frac{M}{|\xi|} \right)^{1/\alpha}\right)^{\alpha - a + 1/2}
     = \left( \frac{M}{|\xi|} \right)^{1 - \frac{a - 1/2}{\alpha}}.
\end{align*}
Finally, inserting this evaluation into the error bound from Theorem \ref{thm:error_hatf} yields
\begin{equation*}
    E_{\text{res}} \lesssim \frac{\sqrt{|\xi|}}{M} \left( \frac{M}{|\xi|} \right)^{1 - \frac{a - 1/2}{\alpha}} = M^{-\frac{a - 1/2}{\alpha}} |\xi|^{\frac{a - 1/2}{\alpha} - 1/2}.
\end{equation*}
For the middle integral we finally have
\begin{small}
\begin{align*}
    E_{\text{mid}}&=\int_{-\Omega}^\Omega|\hat f(\xi) - \tilde f(\xi)|^p \d \xi \lesssim \int_{-\Omega}^\Omega\left(\norm{f}_{\mathcal F_{v,m}}M^{-m} +\norm{f}_{\mathcal F_{v,m}}   M^{-\frac{a - 1/2}{\alpha}} |\xi|^{\frac{a - 1/2}{\alpha} - 1/2}\right)^p\d \xi\\
    &\lesssim \norm{f}_{\mathcal F_{v,m}}^p \int_{-\Omega}^\Omega\left(M^{-m} +  M^{-\frac{a - 1/2}{\alpha}} |\xi|^{\frac{a - 1/2}{\alpha} - 1/2}\right)^p\d \xi\\ 
    &\lesssim \norm{f}_{\mathcal F_{v,m}}^p \int_{-\Omega}^\Omega M^{-pm} +  M^{-\frac{pa - p/2}{\alpha}} |\xi|^{\frac{pa - p/2}{\alpha} - \frac p2}\d \xi\\ 
        &\lesssim \norm{f}_{\mathcal F_{v,m}}^p \left(\Omega M^{-pm}  + M^{-\frac{pa - p/2}{\alpha}}\Omega^{\frac{pa - p/2}{\alpha} - \frac p2 +1}  \right).
\end{align*}
\end{small}
We end up with three different error decay rates, which we want to balance by setting $\Omega$ dependent of~$M$,
\begin{align*}
E_{\text{tail}} &=\Omega^{1-pb},&\\
\quad  E_{\text{mid,1}}  &= \Omega M^{-pm} \quad &
E_{\text{mid,2}} &= M^{-\frac{pa - p/2}{\alpha}}\Omega^{\frac{pa - p/2 }{\alpha} -\frac p2+1} .
\end{align*}
Setting $E_{\text{tail}}= E_{\text{mid,1}}$ yields $\Omega = M^{m/b}$. Setting $E_{\text{mid,1}}=E_{\text{mid,2}}$ then yields
$$M^{m/b} M^{-pm} =M^{-\frac{pa - p/2}{\alpha}}M^{\frac{m}{b}\left(\frac{pa - p/2 }{\alpha}- \frac p2+1\right)}. $$
Comparing the exponents, we have
$$\frac mb  -pm  = -\frac{pa - p/2}{\alpha} +\frac{m}{b}\left(\frac{pa - p/2 }{\alpha} -\frac p2 +1\right)=\frac{pa - p/2}{\alpha}\left(\frac mb -1\right) - \frac{pm}{2b} +\frac mb. $$
Solving for $\alpha$ finally means
\begin{equation}\label{eq:alpha_opt}
\alpha =\frac{pa - p/2}{pm}\left(\frac mb -1\right) \left(\frac{2b}{1-2b}  \right) = \frac{2(b-m)(pa - p/2)}{pm(2b-1)}=\frac{2(b-m)(a - 1/2)}{m(2b-1)}, 
\end{equation}
which is independent of the parameter $p$.
Our $L^p$-error analysis reveals that the mapped quadrature scheme operates in two distinct mathematical regimes, governed by the interplay between the spatial decay $a$
and the spectral regularity $b$. The choice of the density parameter $\alpha$ dictates the error rate.

If the target function is sufficiently smooth and decays rapidly, we can perfectly balance all the error terms. Choosing the optimal parameter $\alpha $ as in~\eqref{eq:alpha_opt} yields the global convergence rate
\begin{equation*}
    E_p = \mathcal{O}\left( M^{-m + \frac{m}{pb}} \right),
\end{equation*}
which for $p=m=2$ reads $\mathcal O\big(M^{-2+\frac 1b}\big)$.
\paragraph{Heavy tails: small parameters $a,b$.}
Since $\mu(x) \sim (1+|x|^2)^{-\alpha/2}$ must remain a valid probability density, this regime strictly requires $\alpha > 1$. Solving this inequality for the optimal parameter~\eqref{eq:alpha_opt} dictates that the balanced regime is only accessible if $b > m$ and the spatial decay satisfies the threshold condition
\begin{equation}\label{eq_threshold}
    a > \frac 12 + \frac{m(2b-1)}{2(b - m)},
\end{equation}
which for $m=2$ reads $a>\frac{5b-4}{2b-4}$. In the following we restrict ourselves to $p=m=2$ for notational simplicity.\par
For functions with heavier tails or lower regularity, i.e., $b \le m$ or $a \le \frac{5b-4}{2b-4}$, the optimal parameter $\alpha\leq 1$ would lead to a non-integrable density. In this boundary regime, perfect balancing of the terms is not possible. To maintain a valid density while maximizing the convergence rate, $\alpha$ must be chosen close to the integrability limit, i.e., $\alpha = 1 + \epsilon$ for a small $\epsilon > 0$. 
Choosing in this case $\Omega \sim M^{\frac{a-1/2}{b+a-1}}$, and letting $\epsilon\rightarrow 0$ leads to the error rates
\begin{align*}
E_{\text{tail}} &= M^{\frac{(a-1/2)(1-2b)}{b+a-1}}, \\
E_{\text{mid, 1}} &=  M^{\frac{a-1/2}{b+a-1} -4},\quad   &E_{\text{mid, 2}} &= M^{-(2a - 1) + \frac{(a-1/2)(2a-1)}{b+a-1}}   .
\end{align*}
The rates of $E_{\text{tail}} $ and $E_{\text{mid, 2}} $ are aligned with the error rates in~\cite{EhGrKl24}, namely
$$\sqrt{E_{\text{tail}}(M)} = \sqrt{E_{\text{mid, 2}}(M)} = \mathcal O\left(M^{-\frac{(a-1/2)(b-1/2)}{a+b-1}}\right).$$
To prove that the error $E_{\text{mid, 1}}$ is asymptotically dominated by the truncation error $E_{\text{tail}}$ for large $M$, we directly compare their algebraic decay rates. Since $M \to \infty$, the term $E_{\text{mid,1}}$ decays at least as fast as $E_{\text{tail}}$ if and only if its exponent is at most the exponent of $E_{\text{tail}}$. Recall that the balanced regime requires $b>m=2$, so that the following equivalence preserves the direction of the inequality. It follows directly from the opposite of the threshold condition~\eqref{eq_threshold} (with $m=2$) by
\begin{align*}
 a &\leq \frac{5b - 4}{2b - 4} \quad \Leftrightarrow \quad    
\frac{a-1/2}{b+a-1} - 4 \leq \frac{(a-1/2)(1-2b)}{b+a-1} .
\end{align*}
To conclude, in the limit case $\epsilon \to 0$, our error rate is the same as the rate established in \cite{EhGrKl24},
\begin{equation*}
    E_2(M) = \mathcal{O}\left( M^{-\frac{(a-0.5)(b-0.5)}{a+b-1}} \right).
\end{equation*}
In this case, we have  different sampling distributions in the space domain, but get the same error rate for the approximation of the Fourier $\hat f$.

\begin{remark}[On the limit case $\epsilon\rightarrow 0$]\label{rem:limit_case}
The borderline rate above is attained only in the limit $\epsilon\rightarrow 0$. For fixed $\epsilon>0$, the density $\mu$ with $\alpha = 1+\epsilon$ is integrable, but the constants in the error bounds blow up as $\epsilon\rightarrow 0$. The normalization constant $C_{\alpha,\sigma}$ in~\eqref{eq:mu_pol} degenerates, the sampling points spread out over an extremely wide range, and the quadrature weights $1/\mu(x_j)$ in~\eqref{eq:ftilde} become huge, which additionally renders the scheme numerically unstable. In practice, $\epsilon$ should therefore not be chosen too small, and the observed rate is slightly worse than the limiting one. We emphasize, however, that the situation for equispaced sampling in~\cite{EhGrKl24} is completely analogous. There, the rate $M^{-\tilde\alpha\tilde\beta/(\tilde\alpha+\tilde\beta)}$ is established for all parameters $\tilde \alpha < a-\tfrac 12$ and $\tilde\beta< b-\tfrac12$, with constants that blow up as $\tilde\alpha\rightarrow a-\tfrac 12$ and $\tilde\beta\rightarrow b-\tfrac12$. Hence the borderline rate $M^{-\frac{(a-1/2)(b-1/2)}{a+b-1}}$ is in both approaches a supremum over admissible parameters rather than an attained rate, and the two results are directly comparable in this sense.
\end{remark}

\subsubsection{Optimizing the variance $\sigma$ for minimizing the $L_p$-error constants}\label{sec:sigma}
The best error decay rate determined in the last subsection does not depend on the variance parameter $\sigma$, however, this parameter can be tuned to reduce the involved constants in the error bounds. \par
To determine the theoretical optimal variance parameter within the balanced regime, we minimize the upper bound of the mid-frequency error component $E_{\text{mid}}$. The estimate~\eqref{eq:norm_f_sigma} is rather pessimistic, so we keep the norm $\norm{f(\sigma \cdot)}$ in the estimates as
\begin{align*}
|\hat{f}(\xi) - \tilde{f}(\xi)|& \leq  \frac{\sigma\norm{f(\sigma \cdot x)}_{\mathcal F_{v,m}}}{M^m} \int_{-\infty}^\infty \frac{1}{(1+y^2)^{\frac{a+m}{2}}\mu_1(y)^m} \d y \\
&\quad+  \frac{\sigma^{3/2}\norm{f(\sigma\cdot x)}_{\mathcal F_{v,1}} \sqrt{|\xi|}}{M} \int_{\mathcal{R}_{\sigma\xi}} \frac{\sqrt{|\mu_1'(y)|}}{(1+y^2)^{a/2}\mu_1(y)^{3/2}} \d y
\end{align*}
The involved constant here can be calculated directly by
\begin{align*}
    A_{\mu,a,m}&\coloneqq \int_{-\infty}^\infty \frac{1}{(1+y^2)^{\frac{a+m}{2}}\mu_1(y)^m} \,\mathrm{d}y 
    = \frac{1}{C_{\alpha}^m} \int_{-\infty}^\infty (1+y^2)^{-\left( \frac{a + m - \alpha m}{2} \right)} \,\mathrm{d}y\\
    &=  \frac{1}{C_{\alpha}^m} \sqrt{\pi} \, \frac{\Gamma(\frac{a+m-\alpha m}{2} - \frac{1}{2})}{\Gamma(\frac{a+m-\alpha m}{2})}
    =\left( \frac{\sqrt{\pi} \, \Gamma(\frac{\alpha - 1}{2})}{\Gamma(\frac{\alpha}{2})} \right)^m \cdot \sqrt{\pi} \, \frac{\Gamma\left(\frac{a + m - \alpha m - 1}{2}\right)}{\Gamma\left(\frac{a + m - \alpha m}{2}\right)}.
\end{align*}
Calculating the integral then yields
\begin{align*}
E_{\text{mid}}&\leq \int_{-\Omega}^\Omega \left(\frac{A_{\mu,a,m}\sigma\norm{f(\sigma \cdot x)}_{\mathcal F_{v,m}}}{M^m} + \frac{\sigma^{3/2}\norm{f(\sigma\cdot x)}_{\mathcal F_{v,1}}\sqrt{|\xi|}}{M} \left(\frac{M}{\sigma |\xi|}\right)^{1-\tfrac{a-1/2}{\alpha}}\right)^p\d \xi \\
&=\int_{-\Omega}^\Omega \left(\frac{A_{\mu,a,m}\sigma\norm{f(\sigma \cdot x)}_{\mathcal F_{v,m}}}{M^m} + \sigma^{1/2+ \tfrac{a-1/2}{\alpha}}\norm{f(\sigma\cdot x)}_{\mathcal F_{v,1}} |\xi|^{-1/2 + \tfrac{a-1/2}{\alpha}}M^{-\tfrac{a-1/2}{\alpha}}\right)^p\d \xi.
\end{align*}
Balancing the $\sigma$-scaling of the two error contributions above suggests choosing $\sigma$ such that
\[ A_{\mu,a,m}\norm{f(\sigma \cdot x)}_{\mathcal F_{v,m}} = \sigma^{\tfrac{a-1/2}{\alpha}-\tfrac 12}\norm{f(\sigma\cdot x)}_{\mathcal F_{v,1}} .\]
While this provides a framework for the existence of a minimizer $\sigma$, solving this relation analytically is unfeasible in practical applications. Computing the exact function-specific norms $\norm{f(\sigma \cdot x)}_{\mathcal F_{v,m}}$ and $\norm{f(\sigma \cdot x)}_{\mathcal F_{v,1}}$ as continuous functions of $\sigma$ requires full functional access to all derivatives up to order $m$, which is typically unavailable when dealing with black-box or sampled data. Even if the norms are known for specific target functions (e.g., standard analytic test cases), the non-linear coupling inside the supremum of the norm definitions precludes a closed-form algebraic solution for $\sigma$. Consequently, rather than evaluating the variance parameter $\sigma$ analytically, a practical implementation relies on a tuning strategy. We evaluate the quadrature scheme over a discrete grid of candidate values $\sigma \in [\sigma_{\min}, \sigma_{\max}]$, choosing the parameter that numerically minimizes the empirical residual.

\subsection{Exponential decay in time and frequency}\label{sec:exponential}
We now discuss functions with exponential or sub-exponential decay in both domains, i.e., 
$f\in \F_v$ and $\hat f\in \F_w$ with the weights
$$v(x) = \e^{r|x|^a}, \qquad w(\xi) = \e^{s|\xi|^b}, \qquad \text{ with }r,s,a,b>0.$$
For equispaced sampling and $a=b=1$ it is shown in~\cite{EhGrKl24} that a suitable coupling of the step size, the truncation parameter and the number of samples $M$ yields root-exponential error decay of the form $\exp(-c\sqrt{M})$ with $c>0$ depending on $r$ and $s$. This is in accordance with classical results on sinc quadrature and the trapezoidal rule for analytic functions, see~\cite{Stenger93,TrWe14}. It is therefore natural to ask whether the transformed rule~\eqref{eq:ftilde} with a density with exponentially decaying tails can attain comparable rates. Surprisingly, an exponentially decaying density $\mu$ does not give good results. The approximation quality depends on the smoothness of the function $g\colon[0,1]\rightarrow \C$, defined in~\eqref{eq:g}. This means that $f$ has to decay faster to $0$ for $x\rightarrow \pm \infty$ than $\mu$. This promotes densities $\mu$ decaying polynomially in comparison to exponential densities $\mu$, since they create non-smoothness at the boundary of $[0,1]$.
\par

We choose the polynomial distribution from~\eqref{eq:mu_pol}, 
\begin{equation*}
\mu(x) = C_{\alpha,\sigma} \left(1+\frac{x^2}{\sigma^2}\right)^{-\alpha/2}, \quad C_{\alpha,\sigma} = \frac{1}{\sigma} \frac{\Gamma(\tfrac \alpha 2)}{\sqrt{\pi}\,\Gamma(\tfrac{\alpha -1}{2})},
\end{equation*}
and the parameters $\alpha, \sigma >0$ have to be chosen appropriately. As in the case for polynomial decay, we first choose $\sigma=1$.

\begin{lemma}
\label{lem:fourier_decay_exp}
Let $f$ be an analytic function on $\mathbb{R}$ exhibiting generalized exponential decay. Assume there exist constants $r > 0$ and $a > 0$ satisfying
\begin{equation*}
    \norm{f}_{\F_{v}}\coloneqq \sup_{x\in \R} |f(x)| v(x)  < \infty  \quad \text{ with }v(x) = \e^{r|x|^a}.
\end{equation*}
Let $\mu$ be a strictly positive, analytic probability density function with an algebraic decay like~\eqref{eq:mu_pol} with variance $\sigma =1$
and let $\Psi(x)$ be its corresponding cumulative distribution function. 
For a given $\xi \in \mathbb{R}$, let $g$ be the transformed function defined by~\eqref{eq:g}.

Let $D > 0$ be a sufficiently small, such that $f$ can be analytically continued into a complex strip $ \{z \in \mathbb{C} \mid |\Im(z)| \leq D\}$. Setting the shift to $|y_0| \coloneqq \min\left\{D, \frac{1}{|\xi|}\right\}$, the $k$-th Fourier coefficient of $g$ is bounded by
    \begin{align*}
        |c_k(g)| 
        &\leq \left(\frac{2}{a \left(\frac{r}{2}\right)^{1/a}} \Gamma\left(\frac{1}{a}\right) \left(1 + \e^{2\pi } \right) + 2 D \right)  \exp\left(-C_{\exp} (|y_0||k|)^{\frac{a}{a+\alpha}}\right)\norm{f}_{\F_v},
    \end{align*}
where $C_{\alpha} > 0$ is the constant of the density $\mu$ from~\eqref{eq:mu_pol} and $C_{\exp}= \left(\left(\frac{r}{2}\right)^{\alpha} \left(2^{-\alpha/2}\pi  C_{\alpha}\right)^a\right)^{\frac{1}{a+\alpha}}$.
\end{lemma}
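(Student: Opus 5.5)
We start from the representation \eqref{eq:c_k_g}, $c_k(g)=\int_\R f(x)\e^{-2\pi\im(\xi x+k\Psi(x))}\d x$, and use the analyticity of $f$ and of $\Psi$ (the primitive of the analytic density $\mu$). The integrand is analytic in the strip $|\Im z|\le D$, so we shift the contour to $\R+\im y_0$, where the sign of $y_0$ is chosen opposite to that of $k$ so that the factor $\e^{-2\pi\im k\Psi}$ decays. Closing the rectangle at $\Re z=\pm R$ and letting $R\to\infty$, the vertical sides vanish because $|f|\le\norm{f}_{\F_v}\e^{-r|x|^a}$ persists (with $r$ replaced by $r/2$) on the strip for large $|x|$. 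This is where the halved rate $r/2$ comes from; we assume this on the strip, as the statement's "sufficiently small $D$" permits. We are left with
\[
c_k(g)=\int_\R f(x+\im y_0)\,\e^{-2\pi\im \xi(x+\im y_0)}\,\e^{-2\pi\im k\Psi(x+\im y_0)}\d x .
\]
The choice $|y_0|\le 1/|\xi|$ gives $|\e^{-2\pi\im\xi(x+\im y_0)}|\le \e^{2\pi}$, which explains the factor $\e^{2\pi}$.

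Next we bound $\Re\big(-2\pi\im k\Psi(x+\im y_0)\big)=2\pi k\,\Im\Psi(x+\im y_0)$. By Taylor expansion, $\Im\Psi(x+\im y_0)=y_0\mu(x)+O(y_0^3\mu''(x))$. Using \eqref{eq:ratio_mu} and $|y_0|\le D$ small, we expect $\sgn(k)\,\Im\Psi(x+\im y_0)\le -\tfrac12|y_0|\mu(x)$, up to a factor like $2^{-\alpha/2}$ coming from comparing $|1+(x+\im y_0)^2|^{-\alpha/2}$ with $(1+x^2)^{-\alpha/2}$. More robustly, we would write $\Im\Psi(x+\im y_0)=\int_0^{y_0}\Re\mu(x+\im t)\d t$ and show $\Re\mu(x+\im t)\ge 2^{-\alpha/2}C_\alpha(1+x^2)^{-\alpha/2}$ for $|t|\le D\le\tfrac12$. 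The integrand modulus is then at most
\[
\norm{f}_{\F_v}\,\e^{2\pi}\exp\!\Big(-\tfrac r2|x|^a-2^{-\alpha/2}\pi C_\alpha|y_0||k|(1+x^2)^{-\alpha/2}\Big).
\]

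The core step is a tradeoff in $x$. We split at a threshold $X$. For $|x|\le X$ the second exponential is at most $\exp(-c|y_0||k|X^{-\alpha})$ (using $1+x^2\le$ const$\cdot X^2$ for $X\ge1$), and we integrate the first exponential over $\R$, which gives $\tfrac{2}{a(r/2)^{1/a}}\Gamma(1/a)$ times that factor. For $|x|>X$ we drop the second exponential and bound the tail by $\e^{-\frac r4X^a}$ times a similar Gamma integral; alternatively we use $\e^{-\frac r2|x|^a}\le \e^{-\frac r2X^a}$ together with integrability. Balancing $\tfrac r2X^a=2^{-\alpha/2}\pi C_\alpha|y_0||k|X^{-\alpha}$ gives $X^{a+\alpha}\sim|y_0||k|$, so both pieces are $\exp\big(-C_{\exp}(|y_0||k|)^{a/(a+\alpha)}\big)$ with $C_{\exp}=\big((r/2)^\alpha(2^{-\alpha/2}\pi C_\alpha)^a\big)^{1/(a+\alpha)}$, exactly as stated. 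The additive $2D$ is meant to absorb the small-$|x|$ contribution where the decay estimate of $f$ on the strip is not yet effective, or the short vertical segments.

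The main obstacle is the lower bound on $\Re\mu$, equivalently on $\Im\Psi$, in the strip, uniformly in $x$. For $|x|$ large the complex argument of $(1+(x+\im t)^2)^{-\alpha/2}$ is small. Near $x=0$ we need $D<1$ so that we stay away from the poles $\pm\im$; this fixes the constant $2^{-\alpha/2}$. I would check this by explicitly writing $1+(x+\im t)^2=(1+x^2-t^2)+2\im xt$ and bounding its argument by a quantity below $\pi/(2\alpha)$ for $|t|\le D$, which is what "sufficiently small $D$" in the statement must mean.
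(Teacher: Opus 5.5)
Your skeleton matches the paper's. You shift into the strip with $y_0=-\mathrm{sgn}(k)|y_0|$ and get damping of size $|k||y_0|\mu(x)$; your identity $\Im\Psi(x+\im y_0)=\int_0^{y_0}\Re\mu(x+\im t)\d t$ is a clean substitute for the paper's Taylor remainder bound $|\mathcal R(x,y_0)|\le\tfrac12$. You use $|y_0|\le 1/|\xi|$ for the factor $\e^{2\pi}$ and balance at $X^{a+\alpha}\sim|k||y_0|$. The difference is that the paper does not move the whole line. It integrates over a rectangle: the tails $|x|>R$ stay on $\R$, there are vertical sides at $\pm R$, the centre $[-R,R]+\im y_0$ is shifted, and $R$ doubles as the balancing parameter. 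That is where the stated constants come from, and this is where your argument breaks.

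\emph{Where the $r/2$, the ``$1$'' and the $2D$ come from.} The $r/2$ is not a weakened decay rate in the strip. It comes from splitting the hypothesis $|f(x)|\le\norm{f}_{\F_v}\e^{-r|x|^a}$ on the real tails as $\e^{-\frac r2|x|^a}\e^{-\frac r2|x|^a}$. This gives $\int_{|x|>R}|f|\le\e^{-\frac r2R^a}\frac{2}{a(r/2)^{1/a}}\Gamma(\tfrac1a)\norm{f}_{\F_v}$, which is the ``$1$'' in $1+\e^{2\pi}$, since there is no factor $\e^{2\pi\xi y_0}$ on $\R$. The $2D$ is the vertical sides, $2|y_0|\norm{f}_{\F_v}\e^{-rR^a}$.

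\emph{The tail step fails.} If only rate $r/2$ is available on the shifted line, as you propose, your first tail estimate gives $\e^{-\frac r4X^a}$. Balancing then produces $C_{\exp}$ with $(r/4)^\alpha$ in place of $(r/2)^\alpha$, so ``exactly as stated'' does not follow. Your alternative, ``$\e^{-\frac r2|x|^a}\le\e^{-\frac r2X^a}$ together with integrability'', is not an estimate. A constant bound is not integrable on $|x|>X$, and $\int_X^\infty\e^{-cx^a}\d x\sim\frac{X^{1-a}}{ac}\e^{-cX^a}$ is not $O(\e^{-cX^a})$ for $a<1$.

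\emph{The factor $2^{-\alpha/2}$ is mis-tracked.} In the paper it comes only from $\mu(R)\ge C_\alpha(2R^2)^{-\alpha/2}$ for $R\ge1$. You put one $2^{-\alpha/2}$ into $\Re\mu(x+\im t)$ and then need a second from $1+x^2\le2X^2$, so the balancing equation you write is not the one your bounds give. Instead, choose $D$ so small that $\Re\mu(x+\im t)\ge\frac12\mu(x)$ for $|t|\le D$. This holds uniformly in $x$, because the argument of $1+(x+\im t)^2$ is at most $\arcsin|t|$ and $|1+(x+\im t)^2|\le(1+t^2)(1+x^2)$. Then $2\pi\cdot\frac12=\pi$, and $1+x^2\le2X^2$ gives exactly $2^{-\alpha/2}\pi C_\alpha$.

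\emph{Two repairs.} Either keep the tails on $\R$ as the paper does, or assume $|f(x+\im t)|\le\norm{f}_{\F_v}\e^{-r|x|^a}$ for all $x$ and $|t|\le D$ with the full rate $r$. The latter is precisely what the paper uses, tacitly, on its centre and vertical segments. Under that assumption your full-line shift is actually cleaner, because the two pieces merge: $\int_{|x|\le X}\e^{-r|x|^a}\d x+\int_{|x|>X}\e^{-\frac r2|x|^a}\d x\le\int_\R\e^{-\frac r2|x|^a}\d x$. You then get the prefactor $\e^{2\pi}\frac{2}{a(r/2)^{1/a}}\Gamma(\tfrac1a)$ with no vertical-side term, which is below the stated one. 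Bounding the two pieces separately by full Gamma integrals would instead give $2\e^{2\pi}>1+\e^{2\pi}$.

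\emph{The strip bound is an extra hypothesis.} It is not something a small $D$ ``permits''. The function $f(z)=\e^{-z^2}\exp(\im\e^{z^2})$ is entire with $|f(x)|=\e^{-x^2}$ on $\R$. But $|f(x+\im t)|=\e^{t^2-x^2}\exp\bigl(-\e^{x^2-t^2}\sin(2xt)\bigr)$ is unbounded on every line $t\neq0$. The paper's proof needs the same hypothesis, so this is not a defect relative to it, but your stated justification for it is wrong.
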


\begin{proof}
    Recall from~\eqref{eq:c_k_g}, that 
    \begin{equation*}
    c_k(g) = \int_{-\infty}^\infty f(x) \e^{-2\pi\im \Theta_{k,\xi}(x)}  \mathrm{d}x ,
\end{equation*}
    where we defined the combined phase function $\Theta_{k,\xi}(x) \coloneqq \xi x + k \Psi(x)$.

    \paragraph{Step 1: Change the integration path.} 
    By assumption, $f$ and $\mu$ are analytic. We analytically continue the integrand into the complex plane, replacing $x$ with $z$.  
    To exploit the complex phase damping we employ a piecewise rectangular curve $\Gamma = \Gamma_{\text{tail}} \cup \Gamma_{\text{vert}} \cup \Gamma_{\text{center}}$. Let $R > 0$ be a free parameter and $y_0 = -\mathrm{sgn}(k) |y_0|$.
    \begin{itemize}
        \item On the real tails $\Gamma_{\text{tail}} = (-\infty, -R] \cup [R, \infty)$, we integrate on the real axis where the phase is purely real.
        \item On the vertical segments $\Gamma_{\text{vert}}$ at $x = \pm R$, we shift into the complex plane up to $y_0$.
        \item On the center segment $\Gamma_{\text{center}} = [-R + \im y_0, R + \im y_0]$, we integrate parallel to the real axis.
    \end{itemize}
    By Cauchy's Integral Theorem, the integral over $\Gamma$ equals the integral over $\mathbb{R}$.
    
    \paragraph{Step 2: Bounding the components.} 
    On the real tails $\Gamma_{\text{tail}}$, by the spatial decay assumption, we have
 \begin{align*}
    \int_{|x| > R} |f(x)| \mathrm{d}x 
    &\leq \norm{f}_{\F_v} \int_{|x| > R} \e^{-r |x|^a} \mathrm{d}x 
    = \norm{f}_{\F_v} \int_{|x| > R} \e^{-\frac{r}{2} |x|^a} \e^{-\frac{r}{2} |x|^a} \mathrm{d}x \\
    &\leq \norm{f}_{\F_v} \e^{-\frac{r}{2} R^a} \int_{|x| > R} \e^{-\frac{r}{2} |x|^a} \mathrm{d}x 
    \leq \norm{f}_{\F_v} \e^{-\frac{r}{2} R^a} \int_{\R} \e^{-\frac{r}{2} |x|^a} \mathrm{d}x \\
    &= \frac{2}{a \left(\frac{r}{2}\right)^{1/a}} \Gamma\left(\frac{1}{a}\right) \norm{f}_{\F_v} \e^{-\frac{r}{2} R^a}.
\end{align*}
    On the two vertical segments $\Gamma_{\text{vert}}$, the integration paths have a constant length of $|y_0|$. The integrand is bounded by the decay condition at $x = \pm R$, yielding a combined contribution bounded by $2|y_0|\norm{f}_{\F_v} \e^{-r R^a} \leq 2 D \norm{f}_{\F_v} \e^{-r R^a}$.
    
    On the center segment $\Gamma_{\text{center}}$, the imaginary shift generates damping. Using the Taylor expansion of $\Psi(z)$ near the real axis yields
\begin{align*}
    \Theta_{k,\xi}(x + \im y_0) &= \xi(x + \im y_0) + k \left( \Psi(x) + \im y_0 \mu(x) + \sum_{\ell=1}^\infty \frac{(\im y_0)^{2\ell+1}}{(2\ell+1)!} \mu^{(2\ell)}(x) \right) \\
    &= \underbrace{\left[ \xi x + k \Psi(x) \right]}_{\Re(\Theta_{k,\xi})} + \im \underbrace{\left[ \xi y_0 + k y_0 \mu(x) + k y_0 \mu(x) \underbrace{\sum_{\ell = 1}^\infty \frac{(-1)^{\ell} y_0^{2\ell}}{(2\ell+1)!} \frac{\mu^{(2\ell)}(x)}{\mu(x)}}_{\eqqcolon \mathcal{R}(x, y_0)} \right]}_{\Im(\Theta_{k,\xi})}.
\end{align*}
By isolating the leading linear term $y_0 \mu(x)$, we encapsulated the higher-order Taylor terms in the remainder function $\mathcal{R}(x,y_0)$. Because $\mu$ is an analytic density function, its derivatives are well-behaved, see~\eqref{eq:ratio_mu}. We restrict the constant $D$ such that the remainder is uniformly bounded by $|\mathcal{R}(x, y_0)| \leq \frac{1}{2}$ for all $|x|\leq R$.

The exponential term now splits into an oscillatory part and a damping part,
\begin{equation*}
    \left| \e^{-2\pi\im \Theta_{k,\xi}(z)} \right| = \e^{2\pi \Im(\Theta_{k,\xi}(z))} = \e^{2\pi \xi y_0} \e^{2\pi k y_0 \mu(x) \left(1 + \mathcal{R}(x, y_0)\right)}.
\end{equation*}
Choosing $y_0$ to always have the opposite sign of $k$, we enforce $2\pi k y_0 = -2\pi |k| |y_0|$. Given $1 + \mathcal{R} \geq \frac{1}{2}$, this yields
\begin{equation*}
    \e^{2\pi k y_0 \mu(x) (1 + \mathcal{R}(x, y_0))} \leq \e^{- \pi |k| |y_0| \mu(x)}.
\end{equation*}
Factoring out the dominant decay at the boundary $x=R$ gives
    \begin{align*}
    \int_{-R}^R |f(x + \im y_0)| \, \e^{- \pi |k| |y_0| \mu(x)} \e^{2\pi \xi y_0}  \mathrm{d}x&\leq 
       \e^{2\pi |\xi| |y_0|}   \e^{-\pi |y_0| \mu(R) |k|} \int_{-R}^R |f(x + \im y_0)|  \d x.
    \end{align*}
    By our specific choice of the contour shift $|y_0| =\min\{ D,\frac{1}{|\xi|}\}$, the frequency-dependent prefactor is uniformly bounded by $ \e^{2\pi}$. Thus, by $\mu(R)\geq C_\alpha 2^{-\alpha/2}R^{-\alpha}  $ (assuming $R\geq 1$)
    \begin{align*}
    \int_{-R}^R |f(x + \im y_0)| \, \e^{- \pi |k| |y_0| \mu(x)} \e^{2\pi \xi y_0}  \mathrm{d}x &\leq \e^{2\pi }   \e^{-\pi |k||y_0| \mu(R)} \norm{f}_{\F_v} \int_{-R}^R \e^{- r|x|^a}  \d x \\
       &\leq \e^{2\pi }   \e^{-2^{-\alpha/2} \pi |y_0| |k| C_{\alpha}R^{-\alpha}}  \norm{f}_{\F_v} \frac{2}{a \left(\frac{r}{2}\right)^{1/a}} \Gamma\left(\frac{1}{a}\right).
    \end{align*}

    \paragraph{Step 3: Error balancing.} 
    We combine the segments to estimate $|c_k(g)|$. To minimize this upper bound, we match the dominating exponents by setting 
    \begin{align*} -\frac{r}{2} R^a &= -\pi 2^{-\alpha/2} |y_0| C_{\alpha}R^{-\alpha} |k|\\
      R^{a+\alpha} &= |k|\frac{2^{-\alpha/2+1} \pi |y_0| C_{\alpha}}{r} \\
      R &= \left(|k|\frac{2^{-\alpha/2+1} \pi |y_0| C_{\alpha}}{r}\right)^{\frac{1}{a+\alpha}}.
    \end{align*}
    Substituting $R(k)$ back into the common exponential bound yields the fractional spectral rate. Note that $\frac{r}{2} R^a = \frac{r}{2} \left(|k|\frac{2^{-\alpha/2+1} \pi |y_0| C_{\alpha}}{r}\right)^{\frac{a}{a+\alpha}} = \left( \left(\frac{r}{2}\right)^\alpha (2^{-\alpha/2}\pi |y_0| C_\alpha |k|)^a \right)^{\frac{1}{a+\alpha}}$. Thus, the exponential term becomes
    \begin{align*}
        \exp\left(-\frac{r}{2} R^a \right) &= \exp\left(-\left(\left(\frac{r}{2}\right)^{\alpha} \left(2^{-\alpha/2} \pi |y_0| C_{\alpha}|k|\right)^a\right)^{\frac{1}{a+\alpha}}\right).
    \end{align*}
    Summing the contributions from $\Gamma_{\text{tail}}$, $\Gamma_{\text{center}}$, and $\Gamma_{\text{vert}}$, and factoring out the common terms, yields the assertion. Since $\e^{-r R^a} = (\e^{-\frac{r}{2} R^a})^2$, the vertical segment decays twice as fast and is bounded by the common exponential term.
\end{proof}

With this decay of the Fourier coefficients we are able to state a result about the approximation error in the next theorem.

\begin{theorem}
\label{thm:quad_error_exp}
Let $v(x)  = \e^{r|x|^a}$ with $r,a>0$ be a weight function and let the assumptions of Lemma \ref{lem:fourier_decay_exp} hold. 
For $\xi\in \R$ choose $|y_0|= \min\{D,|\xi|^{-1}\}$ and let $M$ be big enough, such that
\begin{equation}\label{eq:bed_M}
M\geq \left(\frac{\ln(2)}{C_{\exp}\,(2^{\frac{a}{\alpha+a}} )}\right)^{\frac{\alpha+a}{a}}  |y_0|^{-1}.
\end{equation}
Then, the error of the approximation~\eqref{eq:ftilde} is bounded by
\begin{equation*}
|\hat{f}(\xi) - \tilde{f}(\xi)| \leq C \norm{f}_{\F_v} \exp\left( - C_{\exp} |y_0|^{\frac{a}{a+\alpha}} M^{\frac{a}{a+\alpha}} \right),
\end{equation*}
where \begin{align*}
    C&=4\left(\frac{2}{a \left(\frac{r}{2}\right)^{1/a}} \Gamma\left(\frac{1}{a}\right) \left(1 + \e^{2\pi } \right) + 2 D \right),\\
    C_{\exp}&= \left(\left(\frac{r}{2}\right)^{\alpha} \left(2^{-\alpha/2}\pi  C_{\alpha}\right)^a\right)^{\frac{1}{a+\alpha}}.
\end{align*}
\end{theorem}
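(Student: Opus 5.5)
The plan is to combine the aliasing identity of Theorem~\ref{thm:aliasing_midpoint} with the coefficient bound of Lemma~\ref{lem:fourier_decay_exp}, and then sum a (stretched) geometric-type series.

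First, we check that Theorem~\ref{thm:aliasing_midpoint} applies to $g$ from~\eqref{eq:g}. Since $f$ decays like $\e^{-r|x|^a}$ while $\mu$ decays only algebraically, we have $g(0)=g(1)=0$. Moreover, Lemma~\ref{lem:fourier_decay_exp} shows that the coefficients $c_k(g)$ decay like $\exp(-C_{\exp}(|y_0||k|)^{a/(a+\alpha)})$, so they are absolutely summable and the Fourier series converges absolutely. Because $\mu$ has full support, $c_0(g)=\hat f(\xi)$, and therefore
\[
|\hat f(\xi)-\tilde f(\xi)|\le \sum_{k\neq 0}|c_{kM}(g)|\le 2C_0\norm{f}_{\F_v}\sum_{k=1}^\infty \exp\bigl(-C_{\exp}(|y_0|kM)^{\beta}\bigr),
\]
where $\beta\coloneqq \frac{a}{a+\alpha}\in(0,1)$ and $C_0$ is the prefactor from Lemma~\ref{lem:fourier_decay_exp}. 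The lemma is applied with aliasing index $kM$; it is valid for all $k$ with the same $|y_0|$, because $y_0$ depends only on $\xi$ and $D$. The positive and negative indices contribute identically, which gives the factor $2$.

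Second, we bound the series by a constant multiple of its first term. Set $t\coloneqq C_{\exp}(|y_0|M)^\beta$. Since $\beta<1$, the subadditivity property $k^\beta\ge 1+(k-1)^\beta$ cannot be used directly. Instead we use $k^\beta\ge 1+\beta\ldots$, or more simply we compare with a doubling sum. We group the indices dyadically, $k\in[2^j,2^{j+1})$, so that each block contributes at most $2^j\e^{-t2^{j\beta}}$. Alternatively, we use concavity in the form $k^\beta-1\ge (2^\beta-1)\lfloor\log_2 k\rfloor$. Condition~\eqref{eq:bed_M} reads exactly $t\,2^{\beta}\ge \ln 2$ (up to the placement of constants), i.e.\ $\e^{-t2^\beta}\le \tfrac12$. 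With this, each doubling of $k$ at least halves the size of the term while doubling the number of terms, and the tail is dominated by the first term times a constant. We aim to show $\sum_{k\ge1}\e^{-tk^\beta}\le 2\e^{-t}$. Combined with the factor $2$ from the two signs, this yields the constant $C=4C_0$ stated in the theorem.

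The main obstacle is this summation step. Because $\beta<1$, the terms are not geometric, so a naive ratio test fails, and we must make sure that the hypothesis~\eqref{eq:bed_M} really suffices. The first approach to try is to write $k^\beta\ge 1+(k^\beta-1)$ and to estimate $\sum_{k\ge2}\e^{-t(k^\beta-1)}$ by comparison with an integral, $\int_1^\infty \e^{-t(u^\beta-1)}\d u$, which equals a bound of order $t^{-1/\beta}\Gamma(1/\beta)\e^{t}$. This bound is too weak for small $t$, so we fall back on the dyadic grouping argument with $\e^{-t(2^{(j+1)\beta}-2^{j\beta})}$. If needed, we will accept a slightly different constant, provided the rate $\exp(-C_{\exp}|y_0|^\beta M^\beta)$ is preserved.
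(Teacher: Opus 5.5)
There is a genuine gap, and it is in the step you leave open. Your reduction matches the paper's: absolute convergence from the coefficient decay, $c_0(g)=\hat f(\xi)$, the lemma applied at index $kM$, and a factor $2$ from $\pm k$. This gives $|\hat f(\xi)-\tilde f(\xi)|\le 2C_0\norm{f}_{\F_v}\sum_{k\ge1}\e^{-tk^\beta}$ with $t=C_{\exp}(|y_0|M)^\beta$.

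The problem is the summation. You read \eqref{eq:bed_M} correctly as $t\,2^\beta\ge\ln 2$, but this does not imply your target $\sum_{k\ge1}\e^{-tk^\beta}\le 2\e^{-t}$.
\begin{itemize}
\item Take $\beta=\tfrac12$ and $t=\ln 2/\sqrt2\approx0.49$. The first three terms already sum to about $0.613+0.5+0.428\approx1.54$, which exceeds $2\e^{-t}\approx1.23$.
\item Even in the geometric case $\beta=1$, at the threshold one gets $\e^{-t}/(1-\e^{-t})\approx3.4\,\e^{-t}$.
\item Your dyadic heuristic does not close either. Halving the term while doubling the number of terms only makes the block sums non-increasing, which is not summable.
\item The halving itself at $k=1$, i.e.\ $\e^{-t(2^\beta-1)}\le\tfrac12$, needs $t(2^\beta-1)\ge\ln2$, and \eqref{eq:bed_M} does not provide this.
\end{itemize}

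You can supply the missing step from your own inequality $k^\beta-1\ge(2^\beta-1)\lfloor\log_2 k\rfloor$, which is correct by convexity of $j\mapsto 2^{j\beta}$. Grouping dyadically,
\[
\sum_{k\ge1}\e^{-tk^\beta}\le\sum_{j\ge0}2^j\e^{-t2^{j\beta}}\le \e^{-t}\sum_{j\ge0}\bigl(2\e^{-t(2^\beta-1)}\bigr)^j=\frac{\e^{-t}}{1-2\e^{-t(2^\beta-1)}}.
\]
So $\sum\le2\e^{-t}$, and hence $C=4C_0$, holds under the stronger condition $t(2^\beta-1)\ge2\ln2$. If you want to keep \eqref{eq:bed_M}, use instead that $t\mapsto\sum_{k\ge1}\e^{-t(k^\beta-1)}$ is decreasing. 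With $t_0=2^{-\beta}\ln2$, this gives the stated rate, with the finite, $\beta$-dependent constant $\sum_{k\ge1}\e^{-t_0(k^\beta-1)}$ in place of $2$.

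The paper's proof does not resolve this point. It uses $k^\nu-1\ge(2^\nu-1)(k-1)$ and a geometric series. For $\nu<1$ that inequality fails for every $k\ge3$, since concavity reverses it; for example $\sqrt3-1<2(\sqrt2-1)$. Even granting it, the geometric series would need $t(2^\nu-1)\ge\ln2$ rather than \eqref{eq:bed_M}. Your suspicion that the sub-geometric decay is the real obstacle is therefore justified. As written, though, neither your sketch nor the paper's argument yields the constant $4C_0$ under \eqref{eq:bed_M}.
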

\begin{proof}
We use Theorem~\ref{thm:aliasing_midpoint}, apply the triangle inequality and insert the bound for the Fourier coefficients derived in Lemma \ref{lem:fourier_decay_exp}. Let $|y_0| \coloneqq \min\left\{D, \frac{1}{|\xi|}\right\}$ and define the fractional exponent $\nu \coloneqq \frac{a}{a+\alpha}$. 
    
    The prefactor from Lemma \ref{lem:fourier_decay_exp} can be uniformly bounded for all $\xi \in \mathbb{R}$ by a constant $\widetilde{C} \coloneqq \frac{2}{a \left(\frac{r}{2}\right)^{1/a}} \Gamma\left(\frac{1}{a}\right) \left(1 + \e^{2\pi } \right) + 2 D $, since $|y_0| \leq D$. Substituting $kM$ into the exponential decay term yields
    \begin{align*}
        |I_M(g) - c_0(g)| &\leq \sum_{k \in \mathbb{Z} \setminus \{0\}} |c_{k M}(g)| 
        \leq 2 \widetilde{C} \norm{f}_{\F_v} \sum_{k=1}^\infty \exp\left( - C_{\exp} |y_0|^\nu (k M)^\nu \right).
    \end{align*}
    To isolate the dominant asymptotic behavior, we factor out the leading term ($k=1$) from the infinite sum,
    \begin{equation*}
        \sum_{k=1}^\infty \e^{- C_{\exp} |y_0|^\nu M^\nu k^\nu} = \e^{-  C_{\exp}  |y_0|^\nu M^\nu} \left( 1 + \sum_{k=2}^\infty \e^{- c |y_0|^\nu M^\nu (k^\nu - 1)} \right).
    \end{equation*}
        For the involved sum we calculate, since $k^\nu-1\geq (2^\nu-1)(k-1)$ for $0<\nu<1$, and using the geometric series,
    \begin{align*}
          1 + \sum_{k=2}^\infty \e^{-  C_{\exp}  |y_0|^\nu M^\nu (k^\nu - 1)} &\leq  1 + \sum_{k=2}^\infty \e^{-  C_{\exp}  |y_0|^\nu M^\nu (2^\nu -1)(k - 1)}\\
          &\leq \frac{1}{1-\exp{\left(-  C_{\exp}  |y_0|^\nu M^\nu (2^\nu -1)\right)}}\leq 2,
    \end{align*}
        if \eqref{eq:bed_M} is fulfilled.
    Consequently, the infinite sum is entirely governed by its first mode, preserving the fractional spectral decay rate of the individual Fourier coefficients. This results in the final error bound
    \begin{equation*}
        |I_M(g) - c_0(g)| \leq 4 \widetilde{C}  \norm{f}_{\F_v} \exp\left( - C_{\exp} |y_0 |^{\frac{a}{a+\alpha}} M^{\frac{a}{a+\alpha}} \right),
    \end{equation*}
    which finishes the proof.
\end{proof}
The previous theorem shows that the best error decay rate is achieved if the exponent $\frac{a}{a+\alpha}$ is as large as possible, which is the case for $\alpha\rightarrow 1$. 
The boundary case $\alpha=1$ does not belong to a proper density $\mu$. Especially, in the case of exponential 
decay $a=1$ we receive root-exponential decay $\mathcal{O}(\e^{-C_{\exp}M^{1/2}})$, 
which is up to possible different constants compatible with~\cite{Stenger93,EhGrKl24}. Since $\alpha\rightarrow 1$ also leads to larger constants, in numerical examples we found that a choice which leads to best results is $\alpha\approx 1.3$.
\par

To finally estimate the $L_p$ error of our approximation~\eqref{eq:L_p-error}, we use the splitting of the frequency domain from ~\eqref{eq:Wabfall}.

\begin{corollary}\label{cor:error_decay_exp}
    Let $v(x)  = \e^{r|x|^a}$ and $w(\xi) = \e^{s|\xi|^b}$ with $r,s,a,b>0$ be weight functions and let the assumptions of Lemma \ref{lem:fourier_decay_exp} hold. 
Let $M$ be big enough, such that Theorem~\ref{thm:quad_error_exp} holds. 
    Then the $L_p$-error defined in~\eqref{eq:L_p-error} is bounded by 
\begin{equation*}
E_p^p
\leq  \exp\left( - p\,C_{\exp} M^{\frac{ab}{b(a+\alpha) + a}} \right)\left( 2M^{\frac{a}{b(a+\alpha) + a}}  \,C^p\norm{f}^p_{\F_v} +  \norm{\hat f}_{\F_{w}}^p \frac{2}{b(sp)^{1/b}}\Gamma\left(\frac{1}{b}\right)\right) ,
\end{equation*}
where \begin{align*}
    C&=4\left(\frac{2}{a \left(\frac{r}{2}\right)^{1/a}} \Gamma\left(\frac{1}{a}\right) \left(1 + \e^{2\pi } \right) + 2  \right),\\
    C_{\exp}&= \left(\left(\frac{r}{2}\right)^{\alpha} \left(2^{-\alpha/2}\pi  C_{\alpha}\right)^a\right)^{\frac{1}{a+\alpha}}.
\end{align*} 
\end{corollary}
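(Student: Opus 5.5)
We split the frequency axis as in~\eqref{eq:Wabfall}, $E_p^p\le E_{\text{tail}}+E_{\text{mid}}$, and treat the two pieces separately before balancing them by choosing $\Omega$ as a power of $M$.

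\textbf{Tail term.} With $w(\xi)=\e^{s|\xi|^b}$ we have
\[
E_{\text{tail}}=\norm{\hat f}_{\F_w}^p\int_{|\xi|\ge\Omega}\e^{-sp|\xi|^b}\d\xi .
\]
Exactly as for the real tails in the proof of Lemma~\ref{lem:fourier_decay_exp}, we split $\e^{-sp|\xi|^b}$ and pull out the boundary value. One then bounds the remaining full-line integral by
\[
\int_\R \e^{-sp|\xi|^b}\d\xi=\frac{2}{b(sp)^{1/b}}\Gamma\!\left(\tfrac1b\right).
\]
This yields $E_{\text{tail}}\lesssim \norm{\hat f}_{\F_w}^p\frac{2}{b(sp)^{1/b}}\Gamma(\frac1b)\,\e^{-c\,p\,s\,\Omega^b}$. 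Some care with the splitting constant is needed so that the exponent matches the one in the statement. One can either absorb the factor into the definition of $\Omega$, or simply use $\e^{-sp|\xi|^b}\le \e^{-sp\Omega^b}\cdot$ (integrable factor) with a slightly adjusted $\Omega$.

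\textbf{Middle term.} For $|\xi|\le\Omega$ we apply Theorem~\ref{thm:quad_error_exp} pointwise. For $\Omega\ge 1/D$ and $|\xi|$ near $\Omega$ we have $|y_0|=\min\{D,|\xi|^{-1}\}\ge \Omega^{-1}$, which gives the uniform bound
\[
|\hat f(\xi)-\tilde f(\xi)|\le C\norm{f}_{\F_v}\exp\!\left(-C_{\exp}(M/\Omega)^{\frac{a}{a+\alpha}}\right),
\]
where the constant $C$ now uses $D\le 1$ (explaining the ``$+2$'' in $C$). Integrating over $[-\Omega,\Omega]$ gives
\[
E_{\text{mid}}\le 2\Omega\,C^p\norm{f}_{\F_v}^p\exp\!\left(-pC_{\exp}(M/\Omega)^{\frac{a}{a+\alpha}}\right).
\]
We must also check that condition~\eqref{eq:bed_M} holds uniformly for $|y_0|\ge\Omega^{-1}$, i.e.\ that $M/\Omega$ is large enough. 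This is guaranteed by the hypothesis ``$M$ big enough''.

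\textbf{Balancing.} We choose $\Omega=M^{\frac{a}{b(a+\alpha)+a}}$. Then
\[
(M/\Omega)^{\frac{a}{a+\alpha}}=M^{\frac{b(a+\alpha)}{b(a+\alpha)+a}\cdot\frac{a}{a+\alpha}}=M^{\frac{ab}{b(a+\alpha)+a}}=\Omega^b,
\]
so both exponentials coincide with $\exp(-p\,C_{\exp}M^{\frac{ab}{b(a+\alpha)+a}})$. This holds up to the constant in front of $\Omega^b$ in the tail, which must be reconciled with $C_{\exp}$ by a constant rescaling of $\Omega$ if $s$ and $C_{\exp}$ differ. The prefactor $2\Omega$ becomes $2M^{\frac{a}{b(a+\alpha)+a}}$, and summing the two contributions gives the claimed bound.

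The main obstacle is this exponent matching in the tail term: making the constant in $\e^{-sp\Omega^b}$ agree with $pC_{\exp}$ while keeping the explicit prefactor $\frac{2}{b(sp)^{1/b}}\Gamma(\frac1b)$. A secondary difficulty is justifying that the bound of Theorem~\ref{thm:quad_error_exp} holds uniformly in $\xi$ despite the $\xi$-dependence of $|y_0|$.
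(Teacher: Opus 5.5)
Your route is the paper's. You use the splitting \eqref{eq:Wabfall} and bound the tail by $\e^{-sp\Omega^b}$ times the full-line Gamma integral. You apply Theorem~\ref{thm:quad_error_exp} uniformly on $[-\Omega,\Omega]$ via $|y_0|\ge\Omega^{-1}$, which holds for every $|\xi|\le\Omega$ once $\Omega\ge 1/D$, not only near $\Omega$. Finally you take $\Omega$ as a power of $M$. Write $\nu\coloneqq\frac{a}{a+\alpha}$ and $\kappa\coloneqq\frac{a}{b(a+\alpha)+a}=\frac{\nu}{b+\nu}$. The paper's balanced choice solves $sp\Omega^b=C_{\exp}p\,\Omega^{-\nu}M^\nu$, i.e.\ $\Omega^{b+\nu}=\frac{C_{\exp}}{s}M^{\nu}$. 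Your choice $\Omega=M^{\kappa}$ is slightly cleaner, since it reproduces the middle term of the statement exactly, including the prefactor $2M^{\kappa}$.

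The step you flag as the main obstacle is, however, a genuine gap, and your proposed remedy cannot close it. With your two bounds, the middle exponent is at least $pC_{\exp}M^{b\kappa}$ only if $\Omega\le M^{\kappa}$. The tail exponent is at least $pC_{\exp}M^{b\kappa}$ only if $\Omega\ge (C_{\exp}/s)^{1/b}M^{\kappa}$. These two conditions are compatible only when $s\ge C_{\exp}$: a factor $\lambda>1$ in $\Omega$ multiplies the tail exponent by $\lambda^b$, but the middle exponent by $\lambda^{-\nu}$ and the prefactor by $\lambda$. The paper's proof has the same hole. Its balanced $\Omega$ gives the common exponent $p\,s^{\frac{\nu}{b+\nu}}C_{\exp}^{\frac{b}{b+\nu}}M^{b\kappa}$ and the prefactor $2(C_{\exp}/s)^{\frac{1}{b+\nu}}M^{\kappa}$. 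These are then written as $pC_{\exp}M^{b\kappa}$ and $2M^{\kappa}$, which is legitimate only when $s\ge C_{\exp}$. For general $s$, your argument proves the stated bound with $C_{\exp}$ replaced by $\min\{s,C_{\exp}\}$ in the exponent.

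A second issue sits in the tail estimate itself. The inequality $\int_{|\xi|\ge\Omega}\e^{-sp|\xi|^b}\d\xi\le\e^{-sp\Omega^b}\int_\R\e^{-sp|\xi|^b}\d\xi$ follows from $(\Omega+t)^b\ge\Omega^b+t^b$, so it needs $b\ge1$. The paper writes it as an equality, which holds only for $b=1$. For $0<b<1$ the left-hand side behaves like $\frac{2\Omega^{1-b}}{spb}\e^{-sp\Omega^b}$, so the inequality fails for large $\Omega$. Likewise, your ``integrable factor'' $\e^{-sp(|\xi|^b-\Omega^b)}$ has an integral growing like $\Omega^{1-b}$. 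The half-splitting from Lemma~\ref{lem:fourier_decay_exp} is valid for all $b>0$. But it produces $\e^{-\frac{sp}{2}\Omega^b}$ and an extra factor $2^{1/b}$ in front of $\frac{2}{b(sp)^{1/b}}\Gamma(\frac1b)$.

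So a correct version of the corollary either assumes $b\ge1$ and $s\ge C_{\exp}$, or carries modified constants. For $b\ge1$ the exponent constant becomes $\min\{s,C_{\exp}\}$. For $0<b<1$ it becomes $\min\{s/2,C_{\exp}\}$, together with the factor $2^{1/b}$.
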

\begin{proof}
By \eqref{eq:ftilde} and by \eqref{eq:Wabfall} we obtain for the $L_p$-error $E_p$ defined in~\eqref{eq:L_p-error} the estimate 
\begin{align*}\label{eq:L_2error}
E_p^p
&\leq\underbrace{\norm{\hat f}_{\F_{w}}^p \left(\int_{-\infty}^{-\Omega} \frac{1}{|w(\xi)|^p} \d \xi + \int_{\Omega}^\infty \frac{1}{|w(\xi)|^p} \d \xi \right) }_{E_{\text{tail}}}+  \underbrace{\int_{-\Omega}^\Omega|\hat f(\xi) - \tilde f(\xi)|^p \d \xi}_{E_{\text{mid}}} \notag .
\end{align*}
For the tail integrals we have in the case of (sub-)exponential decay,
\begin{align*}
{E_{\text{tail}}} &= \norm{\hat f}_{\F_{w}}^p  \int_{|\xi|\geq \Omega} \e^{-sp|\xi|^{b}} \d \xi=\norm{\hat f}_{\F_{w}}^p \e^{-sp\Omega^{b}} \int_{\R} \e^{-sp|\xi|^{b}} \d \xi \\
&=\norm{\hat f}_{\F_{w}}^p \e^{-sp\Omega^{b}} \frac{2}{b(sp)^{1/b}}\Gamma\left(\frac{1}{p}\right).
\end{align*}
For the middle integral we use Theorem~\ref{thm:quad_error_exp},
\begin{small}
\begin{align*}
    E_{\text{mid}} \leq C^p \norm{f}^p_{\F_v}  \int_{-\Omega}^\Omega   \exp\left( - C_{\exp} p |y_0|^{\frac{a}{a+\alpha}} M^{\frac{a}{a+\alpha}} \right) \d \xi
\end{align*}
\end{small}
To find the optimal truncation parameter $\Omega$ in terms of the number of quadrature nodes $M$, we balance the exponential decay of $E_{\text{tail}}$ and $E_{\text{mid}}$. 

For the middle integral, recall that the contour shift depends on the frequency via $|y_0| = \min\left\{D, \frac{1}{|\xi|}\right\}$. Assuming $M$ is sufficiently large such that the optimal $\Omega$ satisfies $\Omega > 1/\sqrt{D}$, the minimum damping in the interval $[-\Omega, \Omega]$ occurs at the boundaries $\xi = \pm \Omega$. Defining the fractional exponent $\nu \coloneqq \frac{a}{a+\alpha}$, we can bound the integral by its maximum integrand multiplied by the domain size,
\begin{align*}
    E_{\text{mid}} &\leq C^p \norm{f}^p_{\F_v} \int_{-\Omega}^\Omega \exp\left( - C_{\exp} p \min\left\{D, \frac{1}{|\xi|}\right\}^\nu M^\nu \right) \d \xi \notag \\
    &\leq 2\Omega \, C^p \norm{f}^p_{\F_v} \exp\left( - C_{\exp} p \Omega^{-\nu} M^\nu \right).
\end{align*}
To equilibrate the two error contributions $E_{\text{tail}}$ and $E_{\text{mid}}$, we match their dominant exponential decay rates,
\begin{equation*} \label{eq:balance_exponents}
    s p \Omega^b = C_{\exp} p \, \Omega^{-\nu} M^\nu.
\end{equation*}
Solving for $\Omega$ yields the optimal asymptotic scaling for the frequency truncation,
\begin{equation*} \label{eq:omega_opt}
    \Omega(M) = \left( \frac{C_{\exp}}{sp} M^\nu \right)^{\frac{1}{b + \nu}} = \mathcal{O}\left( M^{\frac{a}{b(a+\alpha) + a}} \right).
\end{equation*}
Substituting this optimal choice of $\Omega(M)$ back into the exponential bound of $E_{\text{tail}}$ (or equivalently $E_{\text{mid}}$) yields the final asymptotic rate for the total $L_p$-error,
\begin{equation*}
    E_p^p \leq   \exp\left( - pC_{\exp} M^{\frac{ab}{b(a+\alpha) + a}} \right)\left(2M^{\frac{a}{b(a+\alpha) + a}}C^p\norm{f}^p_{\F_v} +  \norm{\hat f}_{\F_{w}}^p \frac{2}{b(sp)^{1/b}}\Gamma\left(\frac{1}{p}\right)\right) .
\end{equation*}
We bound the chosen constant $D$ from Theorem~\ref{thm:quad_error_exp} by $1$ and receive the assertion.
\end{proof}
As in Section~\ref{sec:polynomial}, the previous theorem shows error decay results for the case where we use variance $\sigma=1$ for the density $\mu$. The additional variance parameter $\sigma $ does not change the error decay rate, but gives the opportunity to decrease the involved preasymptotic constants and therefore also the approximation error.

\paragraph{Comparison with equispaced sampling}
Corollary~\ref{cor:error_decay_exp} states that with our non-equispaced sampling, according to a polynomial distribution $\mu$ with parameter $\alpha$, we receive error decay rates for the error of the approximation of the Fourier transform of $E_p  = \mathcal O\left(\exp\left( - C_{\exp} M^{\frac{ab}{b(a+\alpha) + a}} \right)\right)$. This notation is hiding some polynomial factor in $M$, but the exponential factor decreases much faster. Additionally the exponent $\frac{ab}{b(a+\alpha) + a}$ might be slightly smaller than the exponent in~\cite{EhGrKl24}, but with the parameters $\alpha$ and $\sigma$ we are able to tune the constants to receive smaller approximation errors numerically. While in~\cite{EhGrKl24} the results focus on the discrete $L_2$-error, our estimates are applicable to more general error measures.   

\paragraph{Optimizing the variance $\sigma$}
The best error decay rate for exponential decay in the last subsection does not depend on the variance parameter $\sigma$, however, this parameter can be tuned to reduce the involved constants in the error bounds. \par
As in the polynomial case in Section~\ref{sec:sigma} the parameter $\sigma$ changes the norms $\norm{f}_{\F_v}$ and $\norm{\hat f}_{\F_w}$ in Corollary~\ref{cor:error_decay_exp}. Again, it is not possible to optimize the parameter $\sigma$ directly. Instead we again tune this parameter in numerical application to reduce the pre-asymptotic constant and to receive a small approximation error. 

\section{Numerical simulations}\label{sec:numerics}
In this section, we present numerical experiments to validate the theoretical error bounds and the asymptotic convergence rates derived in the preceding sections. Specifically, we aim to confirm the optimal error decay rate for polynomial function decay.\par

To ensure a rigorous evaluation and to facilitate a direct comparison with recent developments in numerical harmonic analysis~\cite{EhGrKl24}, we adopt the experimental framework and the specific family of test functions introduced by them.
In all subsequent experiments, we compute the following errors to compare the performance,
\begin{align*}
 E_\infty &\coloneqq\sup_{\xi \in [-\Omega,\Omega]}|\hat f(\xi)-\tilde f(\xi)|,\\
 \text{RMSE}  &\coloneqq \left(\frac{1}{|\mathcal X_{\text{test}}|}\sum_{\xi_{i}\in \mathcal X_{\text{test}}} |\hat f(\xi_i)-\tilde f(\xi_i)|^2\right)^{1/2},\\
 \text{MAE}  &\coloneqq \frac{1}{|\mathcal X_{\text{test}}|}\sum_{\xi_{i}\in \mathcal X_{\text{test}}} |\hat f(\xi_i)-\tilde f(\xi_i)|,
 \end{align*}
as a function of the grid size $M$, where $\mathcal X_{\text{test}}\subset[-\Omega,\Omega]$ is a fine test grid. Note that the root-mean-square error (RMSE) and the mean absolute error (MAE) are discretized versions of the $L_2$- and $L_1$-error, respectively.\par

For the implementation of our method we use the polynomial density~\eqref{eq:mu_pol}. Its cumulative distribution function is, up to rescaling, that of a Student-$t$ distribution with $\alpha-1$ degrees of freedom. Hence the inverse $\Psi^{-1}$, which is needed to generate the sampling nodes $x_j$ in~\eqref{eq:f_tilde_approx}, is available through standard routines. For $\alpha=2$ it reduces to the scaled Cauchy quantile $\Psi^{-1}(y) = \sigma\tan\left(\pi\left(y-\tfrac12\right)\right)$.\par

\paragraph{The equispaced benchmark.}
The estimates in~\cite{EhGrKl24} concern the approximation of $\hat f$ at the equispaced frequencies $\tfrac kp$ by the scaled DFT~\eqref{eq:f_hat_approx_equi}. In order to compare both methods at arbitrary frequencies $\xi$, they extend these values by interpolation. The authors  in~\cite{EhGrKl24} use the sinc-function for interpolation, which arises naturally in Fourier approximation. We also choose this interpolation here for comparison to our results.

\paragraph{Summary of the numerical results.}
In summary, the numerical experiments confirm the theoretical picture developed in this paper. For functions with polynomial decay in space and frequency, non-equispaced sampling driven by the optimized density~\eqref{eq:mu_pol} attains the same asymptotic rates as equispaced sampling with considerably smaller pre-asymptotic errors.
For (sub-)exponentially decay we use also a polynomial distribution $\mu$ and receive also (sub-)exponential decay rates, possibly with a slightly worse decay rate. But the numerical results show that in practice we are able to reduce the approximation error due to the additional tuning parameter, the variance $\sigma$. Our transformation approach thus complements the equispaced theory of~\cite{EhGrKl24}.

\subsection{Polynomial decay in time and frequency domain}
We use the family $f^{a,b}$ of functions with exact polynomial decay $a$ in time and $b$ in frequency, which were introduced in~\cite{EhGrKl24}.
We use infinite linear combinations of shifts of the cardinal  B-splines   
defined by  $B_1\coloneqq{\bm 1}_{[-\frac{1}{2},\frac{1}{2})}$, and 
\begin{equation*}
B_{b+1}\coloneqq B_{b}*B_1,\qquad b=1,2,\ldots\,.
\end{equation*}
 Each $B_b$ is a piecewise polynomial function of degree $b-1$ that is
 $b-2$-times continuously differentiable with support 
 $[-\frac{b}{2},\frac{b}{2}]$, and its  Fourier transform is 
 $\widehat{B_b}(\xi ) = \big( \tfrac{\sin \pi \xi }{\pi \xi } \big)^b
 = \sinc ^b (\xi )$, see~\cite[Section 9.1]{PlPoStTa23}.  

Consider $(c_k)_{k\in\Z}\subseteq \C$ given by its nonzero entries
\begin{equation*}
c_0=\tfrac{1}{2},\qquad c_k = \tfrac{1}{k\pi \mathrm{i}}, \text{ for }
k\in 2\Z+1\, , 
\end{equation*}
so that $c_{2k}=0$.
For integer parameters $a,b \in \N$  we study the family of  functions
\begin{equation}\label{eq:fab}
f^{a,b}(x) \coloneqq \sum_{k\in \Z} c^a_k \,B_b(x-k),\qquad a,b=1,2,\ldots \, .
\end{equation}
Every $f^{a,b}$ is a  locally finite sum,  therefore its  point
evaluations can be computed accurately in numerical experiments, and they satisfy $\sup_{x\in\R}|f^{a,b}(x)| (1+x^2)^{a/2}<\infty$, such that $f^{a,b}\in \F_{v_a,0}$ with $v_a=(1+x^2)^{a/2}$. Furthermore, since each $B_b$ is
 $b-2$-times continuously differentiable, we also have $f^{a,b}\in \F_{v_a,r}$ for all $r\in \N$ with $r\leq b-2$.

To compute the Fourier transform of $f^{a,b}$, we define the Fourier
series $u_a(\xi)\coloneqq \sum_{k\in \Z} c^a_k\e^{-2\pi\im k\xi}$ and
obtain 
\begin{align*}
\widehat{f^{a,b}}(\xi) 
& = \sinc(\xi)^b \,  \sum_{k\in \Z} c^a_k\e^{-2\pi\im k\xi}=\sinc(\xi)^b u_a(\xi)\,.
\end{align*}
The estimate $|\sinc^b(\xi)| \lesssim (1+\xi^2)^{-b/2}$ implies
$\sup_{\xi\in\R}|\widehat{f^{a,b}}(\xi)|(1+\xi^2)^{b/2}<\infty$, thus $f^{a,b}\in \F_{w}$ with $w=(1+\xi^2)^{b/2}$.

Following~\cite{EhGrKl24}, the functions $u_a$ are periodic functions with
period $1$ whose values on $[-1/2,1/2)$ for $a=1,\ldots,5$ are given by
\begin{align*}
u_1(\xi) &={\bf
  1}_{(-\frac{1}{2},0)}(\xi)+\frac{1}{2}{\bf
  1}_{\{-\frac{1}{2},0\}}(\xi)\\
u_2(\xi)  &= |\xi|,\\
u_3(\xi) &= -\sign(\xi) \xi^2+\frac{1}{2}\xi+\frac{1}{8},\\
u_4(\xi) &= 2|\xi|^3/3 - \xi^2/2  + 1/12,\\
u_5(\xi) &= -\sign(\xi)\xi^4/3 + \xi^3/3 - \xi/24+1/32  . 
\end{align*}
Thus, we can compute the point evaluations of $f^{a,b}$ and
$\widehat{f^{a,b}}$ exactly, both analytically and numerically, and
the decay parameters are precisely $a$ and $b$. 
For the numerical simulations we compute the samples
$\widehat{f^{a,b}}(\xi_j)$ and compare them with our approximation $\tilde f(\xi_j)$.\par

In Figure~\ref{fig:poly_numerics} we plot the approximation error for our transformation method and compare against the results from~\cite{EhGrKl24}. Note that instead of the guarantees for the discrete error at equispaced frequencies as in~\cite{EhGrKl24}, our results cover the approximation error in the frequency domain in $L_p$. For that reason we show the numerical approximation error on a fine test grid of size $|\mathcal X_{\text{test}}|=10^4$. Both methods exhibit the same asymptotic decay rates, in accordance with the analysis in Section~\ref{sec:polynomial}, but over the whole range of $M$ our non-equispaced method achieves noticeably smaller errors than the equispaced benchmark, confirming the improved pre-asymptotic constants.

\begin{figure}[htb]
    \centering
    \includegraphics[width=1\linewidth]{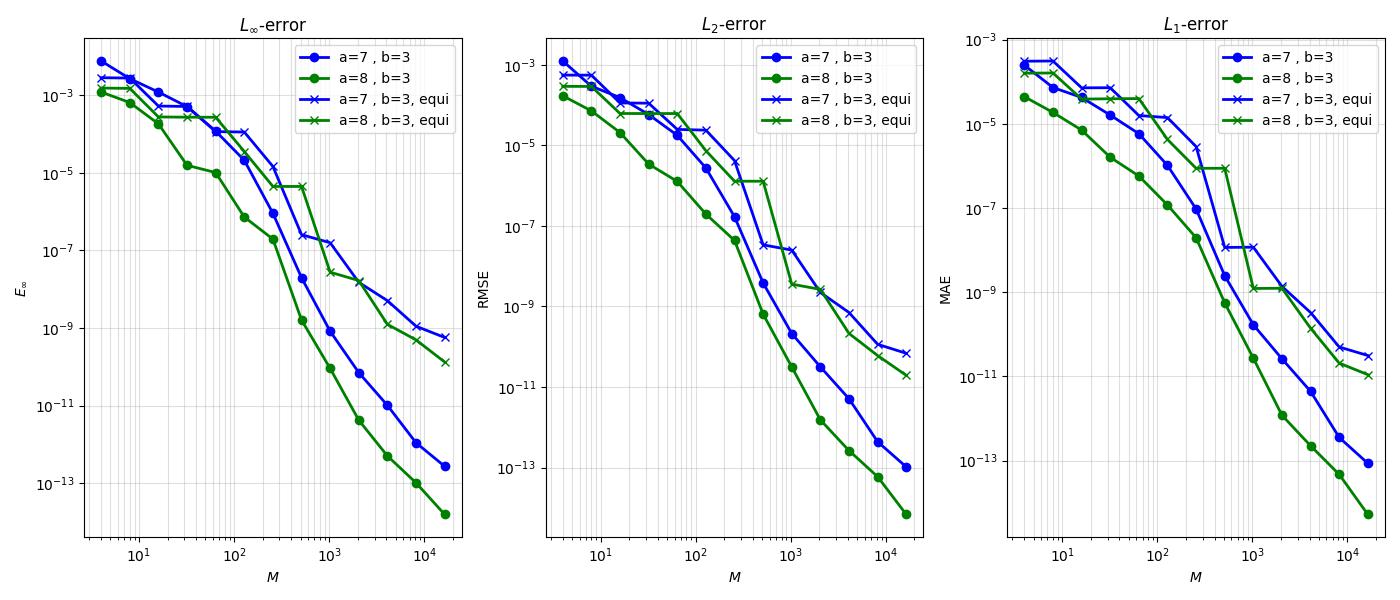}
    \caption{Approximation of the Fourier transform of the function~\eqref{eq:fab}: Comparison of our method with equispaced sampling from~\cite{EhGrKl24}. }
    \label{fig:poly_numerics}
\end{figure}

\subsection{Optimality of the density parameters $\alpha$ and $\sigma$}\label{sec:num_params}
We now confirm numerically the two parameter choices derived in Section~\ref{sec:polynomial}. The optimal tail exponent $\alpha$ from~\eqref{eq:alpha_opt} discussed in Section~\ref{sec:alpha} and the variance $\sigma$ discussed in Section~\ref{sec:sigma}. In both experiments we use the test function $f^{a,b}$ from~\eqref{eq:fab} with $a=7$, $b=3$, so that we can choose $m=2$ in Theorem~\ref{thm:error_hatf}. The corresponding results are shown in Figure~\ref{fig:params}.\par

For these parameters the optimal exponent~\eqref{eq:alpha_opt} equals $\alpha^\ast = \tfrac{2(b-m)(a-1/2)}{m(2b-1)} = 1.3$. The left plot of Figure~\ref{fig:params} shows the RMSE as a function of $M$ for several values of $\alpha$. In agreement with the analysis, the error decay rate is the largest if the optimal parameter $\alpha=\alpha^\ast$ is chosen.\par

The right plot illustrates the effect of the variance $\sigma$ at the fixed exponent $\alpha=\alpha^\ast$. As predicted, $\sigma$ does not change the asymptotic rate but acts as a multiplicative prefactor on the error constant. Tuning $\sigma$ reduces the RMSE, the optimal variance may depend on $M$, but seems to be not far from $\sigma=1$. Although the optimal $\sigma$ is function-dependent and not available in closed form, a simple one-dimensional search over $\sigma$ yields a substantial reduction of the pre-asymptotic constants.

\begin{figure}[htb]
    \centering
    \includegraphics[width=1\linewidth]{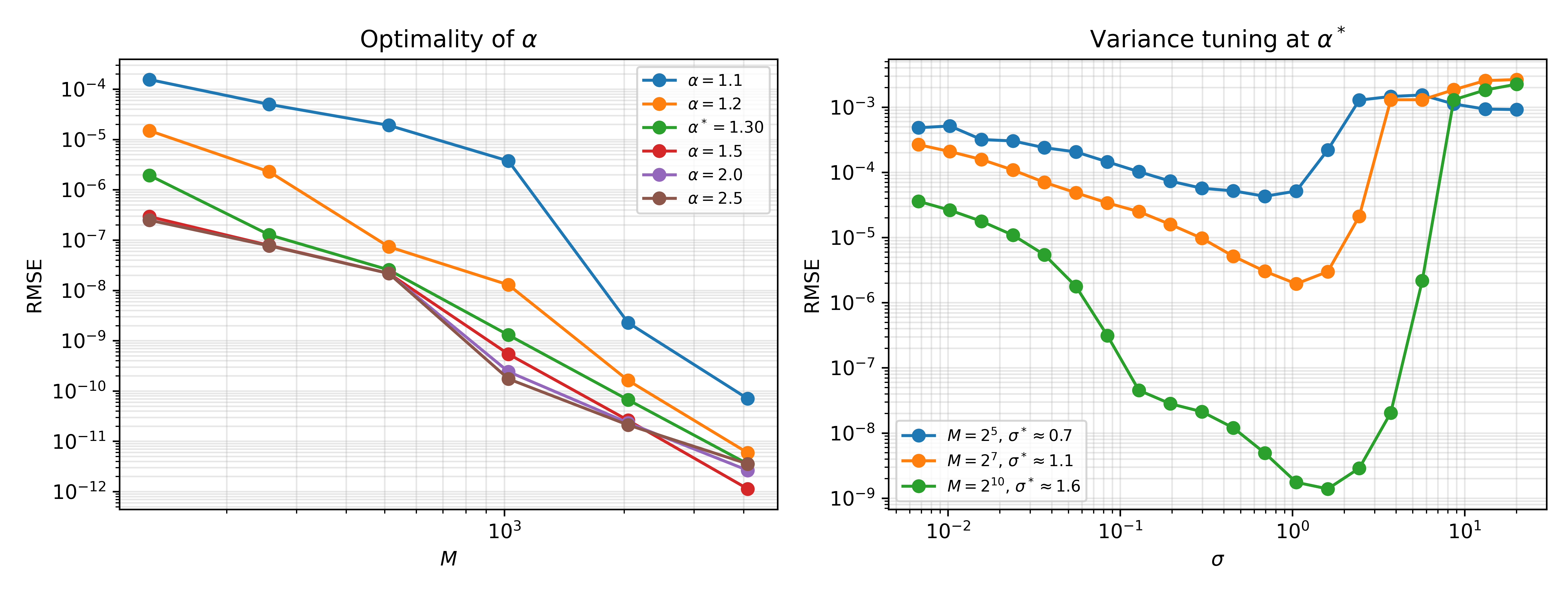}
    \caption{Confirmation of the parameter choices of Section~\ref{sec:polynomial} for the test function~\eqref{eq:fab} with $a=7,b=3$. Left: RMSE versus $M$ for several tail exponents $\alpha$. Right: RMSE versus the variance $\sigma$ at $\alpha=\alpha^\ast$ for several values of $M$.}
    \label{fig:params}
\end{figure}

\subsection{Exponential decay in time and polynomial in frequency}\label{sec:num_exp}
To numerically evaluate the robustness of the proposed density mapping approach against classical equispaced grids, we consider the following test function,
\begin{equation}\label{eq:f_sing}
    f_{\varepsilon}(x) = \mathrm{e}^{-\sqrt{x^2 + \varepsilon^2}}, \quad \text{with } \varepsilon = 10^{-2}.
\end{equation}
Its exact Fourier transform is given analytically in terms of the modified Bessel function of the second kind, $K_1$,
\begin{equation*}
    \hat{f_\varepsilon}(\xi) = \frac{2\varepsilon}{\sqrt{1+4\pi^2\xi^2}} K_1\big(\varepsilon\sqrt{1+4\pi^2\xi^2}\big).
\end{equation*}

Asymptotically, the function exhibits purely exponential decay in both the spatial and frequency domains. In terms of exponential decay we calculate the parameters
\begin{equation*}
    a = 1, \quad r = 1, \quad b = 1, \quad s = 2\pi\varepsilon.
\end{equation*}
The numerical results are shown in Figure~\ref{fig:exp}. We choose the polynomial density $\mu$ from~\eqref{eq:mu_pol} with parameter $\alpha=1.3$. For different numbers of samples $M$ we plot the RMSE, MAE and maximal error over $1000$ equispaced test samples in $[-100,100]$. We tune the variance parameter $\sigma$ for every $M$ separately. For comparison with~\cite{EhGrKl24} we use the parameters proposed there for the equispaced approximation. Our approach achieves lower errors.\par 

Geometrically, the small parameter $\varepsilon$ induces a sharp peak (a near-singularity) at the origin, while preserving the heavy exponential tails $\mathcal{O}(\mathrm{e}^{-|x|})$ for $|x| \to \infty$. This structural property exposes a weakness of the optimal equispaced grid. The sharp gradient near the origin forces the optimal theoretical step size $h$ to be small. Consequently, the truncated spatial interval remains extremely small for a moderate number of quadrature points $M$. \\

In contrast, our proposed non-equispaced approach using a polynomial density mapping overcomes this bottleneck. The density function naturally clusters quadrature nodes at the origin to resolve the near-singularity, while simultaneously stretching the grid sufficiently wide to capture the exponential tails. 
\begin{figure}[htb]
    \centering
    \includegraphics[width=1\linewidth]{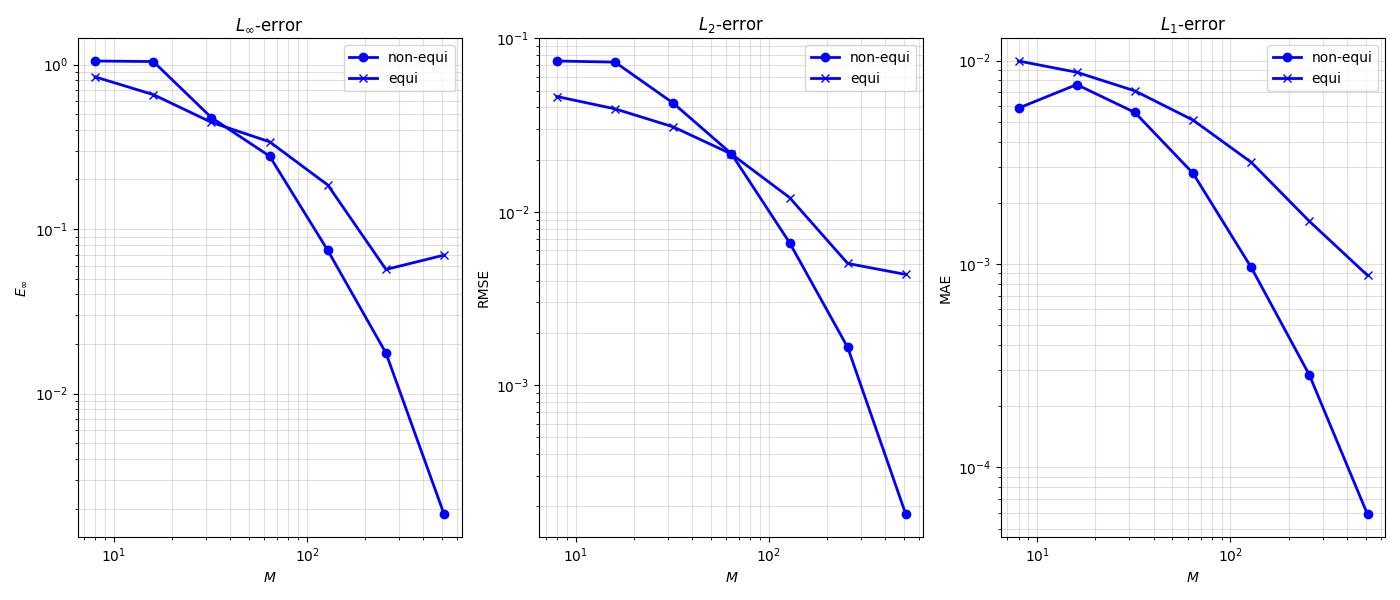}
    \caption{Approximation of the Fourier transform of the function~\eqref{eq:f_sing}: Comparison of our method (non-equi) with equispaced sampling (equi) from~\cite{EhGrKl24}. }
    \label{fig:exp}
\end{figure}

\paragraph*{Funding.} L.W. acknowledges financial support by the Deutsche Forschungsgemeinschaft (DFG, German Research Foundation) - project number 569580074

\appendix
\section{Appendix}
\subsection{Detailed calculations of frequently used integrals}
The following classical integral is used repeatedly throughout the paper. The following result is used for the polynomial case.
\begin{lemma}\label{lem:beta_integral}
For $s>\tfrac 12$ it holds
\begin{equation*}
\int_{-\infty}^\infty (1+x^2)^{-s}\d x = \sqrt{\pi}\, \frac{\Gamma\left(s-\tfrac12\right)}{\Gamma(s)}.
\end{equation*}
\end{lemma}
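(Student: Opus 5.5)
Since the integrand $(1+x^2)^{-s}$ is even, I will first write the integral as $2\int_0^\infty (1+x^2)^{-s}\d x$ and then reduce it to Euler's Beta function. The natural substitution is $t=\frac{1}{1+x^2}$, i.e.\ $x=\sqrt{1/t-1}$ for $x\geq 0$, which maps $[0,\infty)$ bijectively and decreasingly onto $(0,1]$. Differentiating gives
\begin{equation*}
\d x = -\tfrac12\, t^{-3/2}(1-t)^{-1/2}\d t ,
\end{equation*}
so after reversing the orientation the integral becomes
\begin{equation*}
2\int_0^\infty (1+x^2)^{-s}\d x = \int_0^1 t^{s-3/2}(1-t)^{-1/2}\d t = B\!\left(s-\tfrac12,\tfrac12\right).
\end{equation*}

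The only point requiring care is convergence, which is where the hypothesis $s>\tfrac12$ enters. The Beta integral converges exactly when both parameters are positive, that is, when $s-\tfrac12>0$; equivalently, in the original variable the integrand decays like $|x|^{-2s}$, which is integrable at infinity if and only if $2s>1$. Since the integrand is nonnegative, the substitution is justified by the standard change of variables for improper integrals (or by monotone convergence on truncations $[0,R]$).

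It remains to apply the identity $B(p,q)=\frac{\Gamma(p)\Gamma(q)}{\Gamma(p+q)}$ together with $\Gamma(\tfrac12)=\sqrt\pi$. With $p=s-\tfrac12$ and $q=\tfrac12$ we have $p+q=s$, which yields $\sqrt\pi\,\Gamma(s-\tfrac12)/\Gamma(s)$ as claimed. No step is a real obstacle: the only point needing a sentence of justification is the convergence and endpoint behaviour at $t\to0$. As an alternative, the substitution $x=\tan\theta$ reduces the integral to $2\int_0^{\pi/2}\cos^{2s-2}\theta\,\d\theta$, a Wallis-type integral that leads to the same Beta function.
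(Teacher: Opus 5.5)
Your proof is correct and is essentially the paper's argument. The paper substitutes $x=\tan\theta$ and uses the trigonometric representation $B(p,q)=2\int_0^{\pi/2}(\sin\theta)^{2p-1}(\cos\theta)^{2q-1}\d\theta$ to reach $B(s-\tfrac12,\tfrac12)$; your substitution $t=\frac{1}{1+x^2}=\cos^2\theta$ lands directly in Euler's integral form of the same Beta function, and both finish with $B(p,q)=\Gamma(p)\Gamma(q)/\Gamma(p+q)$ and $\Gamma(\tfrac12)=\sqrt\pi$ (your explicit convergence remark, left implicit in the paper, is a welcome addition).
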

\begin{proof}
The substitution $x = \tan\theta$ with $\d x = (1+\tan^2\theta)\d\theta$ gives
\begin{equation*}
\int_{-\infty}^\infty (1+x^2)^{-s}\d x = \int_{-\pi/2}^{\pi/2} (\cos\theta)^{2s-2}\d\theta = B\left(s-\tfrac 12,\tfrac 12\right) = \frac{\Gamma\left(s-\tfrac12\right)\Gamma\left(\tfrac12\right)}{\Gamma(s)},
\end{equation*}
where we used the trigonometric representation of the beta function \[B(x,y) = 2\int_0^{\pi/2}(\sin\theta)^{2x-1}(\cos\theta)^{2y-1}\d\theta,\]
see e.g.~\cite[Chapter 1]{andrews1999}. The assertion follows from $\Gamma(\tfrac 12)=\sqrt{\pi}$.
\end{proof}
Lemma~\ref{lem:beta_integral} with $s=\tfrac\alpha2$ and the substitution $x=\sigma y$ yields
\begin{equation*}
\int_{-\infty}^\infty \left(1+\frac{x^2}{\sigma^2}\right)^{-\alpha/2}\d x = \sigma\sqrt{\pi}\,\frac{\Gamma\left(\frac{\alpha-1}{2}\right)}{\Gamma\left(\frac{\alpha}{2}\right)},
\end{equation*}
which proves the formula for the normalization constant $C_{\alpha,\sigma}$ in~\eqref{eq:mu_pol} and shows that $\alpha>1$ is necessary for the integrability of the density. Moreover, applying Lemma~\ref{lem:beta_integral} with $s = \tfrac{a+m-\alpha m}{2}$ gives the constant $A_{\mu,a,m}$ in Section~\ref{sec:rates}. The requirement $s>\tfrac 12$ is exactly the admissibility condition~\eqref{eq:cond_alpha}, $\alpha<\tfrac{a-1}{m}+1$.\par
For the theory of the (sub) exponential decay we frequently use the following integral.
\begin{lemma}
For $a, r > 0$ we have
\begin{equation*}
    \int_{-\infty}^\infty \e^{- r|x|^a} \d x = \frac{2}{a} r^{-\frac{1}{a}} \Gamma\left(\frac{1}{a}\right),
\end{equation*}
where $\Gamma(s) = \int_0^\infty t^{s-1} \e^{-t} \d t$ denotes the standard Gamma function.
\end{lemma}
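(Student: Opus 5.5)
The integrand is even, so I will reduce to the half-line, $\int_{-\infty}^\infty \e^{-r|x|^a}\d x = 2\int_0^\infty \e^{-rx^a}\d x$, and then substitute to turn the exponent into the standard Gamma kernel. On $(0,\infty)$ I set $t = r x^a$. This is a smooth, strictly increasing bijection of $(0,\infty)$ onto itself for $r,a>0$, so $x = (t/r)^{1/a}$ and
\begin{equation*}
\d x = \frac{1}{a}\, r^{-\frac1a}\, t^{\frac1a - 1}\d t .
\end{equation*}
The transformed integral is
\begin{equation*}
2\int_0^\infty \e^{-t}\,\frac{1}{a}\,r^{-\frac1a}\,t^{\frac1a-1}\d t = \frac{2}{a}\,r^{-\frac1a}\int_0^\infty t^{\frac1a-1}\e^{-t}\d t .
\end{equation*}
The last integral is exactly the definition of $\Gamma\!\left(\tfrac1a\right)$, which gives the claimed formula.

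The only point needing a brief justification is that all integrals are finite, so the substitution for improper integrals with nonnegative integrands is legitimate. Near $t=0$ the integrand behaves like $t^{1/a-1}$ with exponent $>-1$, since $a>0$. At infinity it is dominated by $\e^{-t/2}$. Hence $\Gamma(1/a)$ converges, and by monotone convergence (or Tonelli) the change of variables holds on $(0,\infty)$.

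I do not expect any real obstacle. The one thing to get right is the Jacobian factor $\tfrac1a r^{-1/a}$ together with the factor $2$ from symmetry. As a sanity check, the case $a=2$ gives $\int_{\R} \e^{-rx^2}\d x = r^{-1/2}\Gamma(1/2) = \sqrt{\pi/r}$, and $a=1$ gives $2/r$, both of which are correct.
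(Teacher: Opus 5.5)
Your proof is correct and is the paper's argument: evenness reduces the integral to the half-line, the substitution $t = r x^a$ produces the Jacobian $\frac{1}{a} r^{-1/a} t^{1/a-1}$, and the remaining integral is $\Gamma(1/a)$. Your convergence justification for the improper substitution and the sanity checks at $a=1,2$ are sound additions that the paper omits.
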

\begin{proof}
Due to the symmetry of the integrand, we can write the integral over $\mathbb{R}$ as twice the integral over the positive half-axis:
\begin{equation*}
    \int_{-\infty}^\infty \e^{-r|x|^a} \d x = 2 \int_0^\infty \e^{-rx^a} \d x.
\end{equation*}
Using the substitution $u = rx^a$ (hence $x = (u/r)^{1/a}$ and $\d x = \frac{1}{a} r^{-1/a} u^{\frac{1}{a} - 1} \d u$) yields
\begin{equation*}
    2 \int_0^\infty \e^{-u} \frac{1}{a} r^{-\frac{1}{a}} u^{\frac{1}{a} - 1} \d u = \frac{2}{a} r^{-\frac{1}{a}} \int_0^\infty u^{\frac{1}{a} - 1} \e^{-u} \d u.
\end{equation*}
Recognizing the integral representation of the Gamma function $\Gamma\left(\frac{1}{a}\right) = \int_0^\infty u^{\frac{1}{a}-1} \e^{-u} \d u$, we obtain the desired result.
\end{proof}

\bibliographystyle{abbrvurl}
\bibliography{references}

\end{document}